\newtheoremstyle{mytheorem}
  {\baselineskip}
  {\baselineskip}
  {\itshape}
  {}
  {\bfseries}
  {.}
  {.5em}
  {}
\newtheoremstyle{myremark}
  {\baselineskip}
  {\baselineskip}
  {}
  {}
  {\bfseries}
  {.}
  {.5em}
  {}
\theoremstyle{mytheorem}
\numberwithin{equation}{section}
\newtheorem{thm}{Theorem}
\newtheorem{prop}{Proposition}[section]
\newtheorem{lem}[prop]{Lemma}
\newtheorem{cor}[prop]{Corollary}
\newtheorem{thmapx}[prop]{Theorem}
\theoremstyle{myremark}
\newtheorem{rem}[prop]{Remark}
\newcommand{\assumptions}{(H1)--(H2) }
\newcommand{\dashlist}{--}
\newcommand{\openI}{I}
\newcommand{\IR}{\mathbb{R}}
\newcommand{\IN}{\mathbb{N}}
\newcommand{\possubset}{U}
\newcommand{\possubsett}{U'}
\newcommand{\posparam}{\sigma}
\newcommand{\proj}{P}
\newcommand{\discspace}{V}
\newcommand{\basisfcn}{e}
\newcommand{\functional}{x'}
\newcommand{\mobextra}{m_0}
\newcommand{\gk}{N}
\newcommand{\ddt}{\dfrac{d}{dt}}
\newcommand{\ddx}{\partial_x}
\newcommand{\inttime}[1]{\int\limits_0^T {#1} \, dt}
\newcommand{\inttimeex}[3]{\int\limits_{#1}^{#2} {#3} \, dt}
\newcommand{\intspace}[1]{\int\limits_\Omega {#1} \, dx}
\newcommand{\intspaceex}[2]{\int\limits_{#1} {#2} \, dx}
\newcommand{\intspacetime}[1]{\iint\limits_{\Omega_T} {#1} \, dx \, dt}
\newcommand{\intspacetimeex}[2]{\iint\limits_{#1} {#2} \, dx \, dt}
\newcommand{\intex}[4]{\int\limits_{#1}^{#2}{#3} \, #4}
\newcommand{\possettimeabs}[1]{\left\lbrace \abs{#1} > 0 \right\rbrace_T}
\newcommand{\possettime}[1]{\left\lbrace #1 > 0 \right\rbrace_T}
\newcommand{\posset}[1]{\left\lbrace #1 > 0 \right\rbrace}
\newcommand{\possetex}[2]{\left\lbrace #1 > #2 \right\rbrace}
\newcommand{\zeroset}[1]{\left\lbrace #1 = 0 \right\rbrace}
\newcommand{\setex}[1]{\lbrace #1 \rbrace}
\newcommand{\prodl}[2]{\bigl( #1 , #2 \bigr)_{L^2(\Omega)}}
\newcommand{\pairh}[2]{\bigl\langle #1 , #2 \bigr\rangle_{H^1(\Omega)}}
\newcommand{\prodex}[3]{\bigl( #1 , #2 \bigr)_{#3}}
\newcommand{\pairex}[3]{\bigl\langle #1 , #2 \bigr\rangle_{#3}}
\newcommand{\norml}[1]{\bigl\lVert #1 \bigr\rVert_{L^2(\Omega)}}
\newcommand{\norm}[2] {\bigl\lVert #1 \bigr\rVert_{#2}}
\newcommand{\bochner}[2]{#1 ( \openI ; #2 (\Omega) )}
\newcommand{\bochnerex}[3]{#1 ( #2 ; #3 )}
\newcommand{\evspace}{W^1_2 \left( \openI ; H^1(\Omega) ; L^2(\Omega) \right)}
\newcommand{\timespace}{\bochnerex{L^2}{I}{H^1(\Omega)'}}
\newcommand{\weakto}{\rightharpoonup}
\newcommand{\strongto}{\rightarrow}
\newcommand{\weakstarto}{\stackrel{\ast}{\rightharpoonup}}
\newcommand{\weaktext}{\, \text{ in }}
\newcommand{\strongtext}{\, \text{ in }}
\newcommand{\weakstartext}{\, \text{ in }}
\newcommand{\pwaetext}{\, \text{ pointwise almost everywhere in }}
\newcommand{\embedding}{\hookrightarrow}
\newcommand{\cptembedding}{\stackrel{c}{\hookrightarrow}}
\providecommand{\abs}[1]{\lvert #1 \rvert}
\providecommand{\absB}[1]{\Bigl\vert #1 \Bigr\vert}
\newcommand{\Q}[1]{\sqrt{1+\lvert #1 \rvert^2}}
\newcommand{\Qex}[1]{\dfrac{#1}{\sqrt{1+\lvert #1 \rvert^2}}}
\newcommand{\cma}{,}
\newcommand{\fst}{.}
\newcommand{\beginstep}[2]{\textit{#2}}
\newcommand{\leftA}{\biggl(}
\newcommand{\rightA}{\biggr)}
\newcommand{\approxu}{u_0}
\newcommand{\qedthm}{\hfill \(\blacksquare\)}
\newcommand{\qedrem}{\hfill \(\square\)}
\newcommand{\limeps}{\to}
\newcommand{\sequence}[2]{\lbrace #1 \rbrace_{#2}}
\DeclareMathOperator{\supp}{supp}
\DeclareMathOperator*{\esssup}{ess\,sup}
\DeclareMathOperator{\divergence}{div}
\newcommand{\clearsection}{\clearpage}
\begin{document}

\allowdisplaybreaks[2]
\setcounter{page}{-1}
\setcounter{section}{-1}

\begin{titlepage}
\begin{center}
\large
\vspace*{0.5cm}

Diploma Thesis\\
\vspace*{1.65cm}
{\Huge\bfseries{On the Thin-Film Equation with\\[0.5ex]Nonlinear Surface Tension Term}\\}
\vspace*{1.65cm}

{\Large Jan Friederich\\}
\vspace*{1.65cm}

Erlangen,\\
June 8, 2009\\
\vspace*{3.2cm}

Supervisor: Prof. Dr. G\"{u}nther Gr\"{u}n\\[2ex]
Chair of Applied Mathematics I\\
Department of Mathematics \\
Friedrich-Alexander-Universit\"{a}ät Erlangen-N\"{u}rnberg\\
\vspace*{0.5cm}
\includegraphics{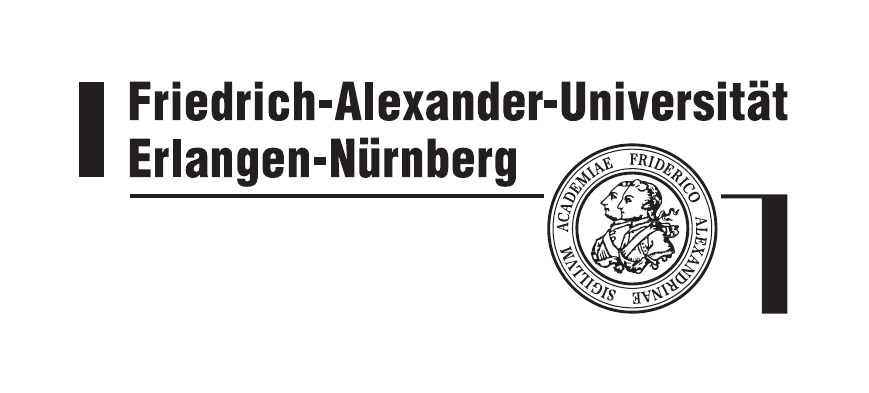}

\end{center}
\end{titlepage}

\cleardoubleoddemptypage

\tableofcontents
\cleardoublepage

\pagestyle{scrheadings}

\section{Introduction}

In this thesis, we are concerned with existence and nonnegativity results for solutions of the fourth-order nonlinear degenerate parabolic partial differential equation
\begin{equation} \label{P1}
\begin{aligned}
u_t &- \divergence \left( m(u) \, \nabla p \right) = 0 \\
p &= - \divergence \Qex{\nabla u} 
\end{aligned}
\qquad \text{ in } \IR^+ \times \Omega \cma
\end{equation}
where \(\Omega \subset \IR^1\) is an open, bounded interval. Equation \eqref{P1} is supplemented by Neumann boundary conditions for \(u\) and \(p\). For the nonlinear mobility \(m(.)\) we have in mind functions of the form \(m(u) = \abs{u}^n\), \(n \geq 1\). Problem \eqref{P1} is closely related to the thin-film equation
\begin{equation} \label{P2}
\begin{aligned}
u_t &- \divergence \left( m(u) \, \nabla p \right) = 0 \\
p &= - \triangle u 
\end{aligned}
\qquad \text{ in } \IR^+ \times \Omega \cma
\end{equation}
where the curvature term
has been replaced by the linear approximation \(-\triangle u\).


Serving as a model problem for the evolution of thin liquid films, equation \eqref{P2} has been extensively studied throughout the last two decades. The unknown \(u = u(t,x)\) describes the thickness of a thin film of viscous fluid that spreads over a horizontal solid surface. The motion of the fluid is solely driven by surface tension, which is modeled by the pressure term \(p = p(t,x)\). In this regard, the definition of the nonlinear function \(m(.)\) is determined by the flow condition that is imposed at the liquid-solid surface. For instance, for a no-slip condition one considers \(m(u) = \abs{u}^3\), whereas various slip conditions entail mobility functions of the form \(m(u) = \abs{u}^3 + \beta \abs{u}^s\), \(0 < s < 3\), \(\beta > 0\). For a derivation of the model problem \eqref{P2}, which is based on lubrication approximation for the incompressible Navier--Stokes equations, we refer the reader to Oron, Davis and Bankoff \cite{oron}.

The mathematical investigation of problem \eqref{P2} started in 1990 with the paper of Bernis and Friedman \cite{berfri}, who were the first to prove existence of a weak solution to \eqref{P2} in one space dimension, supplemented by Neumann boundary conditions \(u_x = p_x = 0\). In addition, Bernis and Friedman were able to show that initially nonnegative solutions stay nonnegative for all times.  This is a remarkable result in comparison to non-degenerate fourth-order equations, solutions of which may in general take negative values even if the initial data are strictly positive. Their study of equation \eqref{P2} relies on two kinds of estimates: The energy estimate, which reads in one space dimension as
\begin{equation} \label{int0energy2}
\intspace{\abs{u_x(T, x)}^2} + \intspacetime{m(u) \, \abs{u_{xxx}}^2} = \intspace{\abs{u_{0,x}}^2} \cma
\end{equation}
and the entropy estimate
\begin{equation} \label{int0entropy2}
\intspace{ \int\limits_a^{u(T, x)}{\int\limits_a^s{\frac{1}{m(r)} \, dr} \, ds}} + \intspacetime{\abs{u_{xx}}^2} = \intspace{ \int\limits_a^{u_0(x)}{\int\limits_a^s{\frac{1}{m(r)} \, dr} \, ds}} \fst
\end{equation}
As another noteworthy feature of equation \eqref{P2}, one observes that the evolution of a thin liquid film may in fact be considered as a free-boundary problem. More precisely, the free boundary is given by the contact line of fluid, solid and air, that is \(\partial \posset{u}\). In this context, it turns out that the qualitative behaviour of the solution \(u\) depends on the growth exponent \(n\) of the mobility \(m(.)\) at zero. In one space dimension, Beretta, Bertsch and Dal Passo \cite{berberdal} showed that the support of the solution \(u\) is non-decreasing if \(n \geq \frac{3}{2}\) and stays constant in time if \(n \geq 4\). Moreover, the solution \(u\) exhibits the property of finite speed of propagation if \(0 < n < 3\) (see Bernis \cite{bernis1,bernis2} and Hulshof and Shishkov \cite{hulshi}). These results have partly been extended to higher space dimensions. For instance, we refer to Gr\"{u}n \cite{gruen1} and Elliott and Garcke \cite{ellgar} for existence results and to Beretta, Dal Passo, Garcke and Gr\"{u}n \cite{berdalgargruen} and Dal Passo, Gracke and Gr\"{u}n \cite{dalgargruen} for results on the qualitative behaviour of solutions. We also mention the occurrence of waiting time phenomena in multiple space dimensions (see Dal Passo, Giacomelli and Gr\"{u}n \cite{dalgiagruen}).


Recently, Becker \cite{becker} introduced a slight modification of the lubrication approximation appearing in \cite{oron} to derive an evolution equation for the film height. In the course of this derivation, the linear approximation to the surface tension \(-\triangle u\) is replaced by the exact curvature term \(- \nabla \cdot (\nabla u / \Q{\nabla u})\), which finally leads to problem \eqref{P1}. Related to this equation, the analogous energy estimate in one space dimension can be formally derived as
\begin{equation} \label{int0energy1}
\intspace{\Q{u_x(T, x)}} + \intspacetime{m(u) \, \abs{p_x}^2} = \intspace{\Q{\smash[b]{u_{0,x}}}} \cma
\end{equation}
whereas the entropy estimate reads as
\begin{equation} \label{int0entropy1}
\intspace{ \int\limits_a^{u(T, x)}{\int\limits_a^s{\frac{1}{m(r)} \, dr} \, ds}} + \intspacetime{\frac{\abs{u_{xx}}^2}{\Q{u_x}^3}} = \intspace{ \int\limits_a^{u_0(x)}{\int\limits_a^s{\frac{1}{m(r)} \, dr} \, ds}} \fst
\end{equation}
In view of the surface energy terms appearing in the corresponding estimates \eqref{int0energy1} and \eqref{int0energy2} respectively, one expects solutions of problem \eqref{P1} to exhibit spherical-cap-shaped profiles, whereas solutions of the standard thin-film equation \eqref{P2} are in general paraboloid-shaped. This has indeed been confirmed by the numerical simulations presented by Becker \cite{becker}. In addition, Becker proved existence and nonnegativity of discrete solutions to problem \eqref{P1} in multiple space dimensions.

It is the aim of the present thesis to prove existence and nonnegativity of weak solutions to problem \eqref{P1} in the continuous setting in one space dimension. Following the arguments of \cite{berfri,gruen1,ansgia}, our investigation relies solely on the energy estimate \eqref{int0energy1} and the entropy estimate \eqref{int0entropy1}. However, in comparison to the standard thin-film equation \eqref{P2} and its associated estimates \eqref{int0energy2} and \eqref{int0entropy2}, we will have to deal with two essential analytical difficulties. First, the pressure \(p\) is given by a nonlinear second-order differential operator applied to \(u\), which is the so-called minimal surface operator. Secondly -- and even more important --, this operator is not coercive, which
leads to very weak a priori estimates. Contrary to equation \eqref{P2}, where the estimates \eqref{int0energy2} and \eqref{int0entropy2} immediately imply that \(u\) is bounded in \(\bochnerex{L^\infty}{\IR^+}{H^1(\Omega)} \cap \bochnerex{L^2_{loc}}{\IR^+}{H^2(\Omega)}\), it is not obvious at all whether the estimates \eqref{int0energy1} and \eqref{int0entropy1} allow to infer any reasonable a priori estimate for weak derivatives of \(u\) in any reflexive function space.

However, we suggest an argument that combines the estimates \eqref{int0energy1} and \eqref{int0entropy1} and yields an estimate for \(u\) in the space \(H^2(\Omega)\) for almost all times (see Lemma \ref{lemcrazy}).
This allows to identify the pressure \(p\) and, in turn, to prove existence of a solution in a certain weak sense. In addition, we may partly employ of the arguments of Bernis and Friedman \cite{berfri} to prove that the solution \(u\) is nonnegative or even positive for almost all times depending on the growth exponent \(n \geq 1\).

Although our overall strategy might be adopted for a proof of existence in multiple space dimensions, some of the crucial arguments (including the aforementioned Lemma \ref{lemcrazy}) are valid in one space dimension only. Hence, we will confine ourselves to the one-dimensional setting in the sequel. In particular, an existence result in higher space dimensions as well as qualitative results analogous to those mentioned above remain open.

Let us briefly describe the organization of this thesis. In section \ref{secpreliminaries}, after introducing some notation, the problem and the results are stated and an outline of the proofs is given. Using an multi-step approximation procedure, sections \ref{secapproximate}, \ref{secdegenerate} and \ref{secfinal} are devoted to the proofs of the theorems. Finally, in section \ref{secperspectives} we shall discuss several open problems related to equation \eqref{P1}.

\clearsection

\clearsection

\section{Preliminaries} \label{secpreliminaries}

\subsection{Notation and Facts}

In the following, we will consider functions on the open, bounded intervals \(\Omega = (-l, l)\) and \(I = (0,T)\), where \(l, T > 0\) are arbitrary positive numbers. We set \( \Omega_T := (0,T) \times \Omega \).

We will use subscripts to denote partial derivatives, e.g. \(u_x = \ddx u\). Superscripts will mostly denote sequence indices or parameters and must not be confused with exponents, e.g. \(\sequence{u^\epsilon}{\epsilon > 0}\). We will drop arguments of functions if there is no misunderstanding possible. Generic positive constants are denoted by \(C(.)\), where the arguments indicate the dependencies on parameters (except on \(T\) and \(\Omega\)). The value of a constant may change within an estimate.

For a Lebesgue-measurable set \(A\), \(\mu(A)\) denotes the Lebesgue measure and \(\chi_A\) the characteristic function of \(A\). For functions \(u : \Omega_T \to \IR \) and \(\rho \geq 0\) we define
\begin{align*}
\possetex{u}{\rho}_T &:= \lbrace (t,x) \in \Omega_T : u(t, x) > \rho \rbrace \subseteq \Omega_T \cma \\[5pt]
\possetex{u(t)}{\rho} &:= \lbrace x \in \Omega : u(t, x) > \rho \rbrace \subseteq \Omega \cma
\end{align*}
and analogously for \(<, = , \geq, \leq\).

Let \(X\) be a real Banach space with norm \(\norm{.}{X}\). Then, \(X'\) denotes the dual of \(X\) and \[\pairex{.}{.}{X} := \pairex{.}{.}{X' \times X}\] the pairing of elements of \(X'\) and \(X\). If \(X\) is a Hilbert space, \[\prodex{.}{.}{X} := \prodex{.}{.}{X \times X}\] denotes the scalar product. We write \(X \embedding Y\) and \(X \cptembedding Y\) for the continuous and compact embedding, respectively, of \(X\) into another Banach space \(Y\). \(\mathcal{L}(X;Y)\) is the space of continuous linear operators mapping from \(X\) to \(Y\). We use the symbols \(\strongto\), \(\weakto\) and \(\weakstarto\) to denote strong, weak and weak-* convergence, respectively.

We will deal with the following spaces of functions on \(\Omega\):

\begin{itemize}
\item By \(L^p(\Omega)\), \(1 \leq p \leq \infty\), we denote the spaces of measurable functions which are integrable to the \(p\)-th power. These are Banach spaces equipped with the usual norms
\[
\norm{f}{L^p(\Omega)} := \Bigl( \intspace{\abs{f(x)}^p} \Bigr)^\frac{1}{p} \quad (1 \leq p < \infty) \cma \qquad \norm{f}{L^\infty(\Omega)} := \esssup\limits_{x\in\Omega} \, \abs{f(x)} \fst
\]
\(L^p(\Omega)\) is separable if \(1 \leq p < \infty\) and reflexive if \(1 < p < \infty\). \(L^2(\Omega)\) is a Hilbert space, where the scalar product is given by
\[
\prodl{f}{g} := \intspace{f(x) \, g(x)} \fst
\]

\item The Sobolev spaces \(W^{k,p}(\Omega)\), \(k \in \IN\), \(1 \leq p \leq \infty\), are the spaces of weakly differentiable functions in \(L^p(\Omega)\) whose distributional derivatives up to order \(k\) are elements of \(L^p(\Omega)\) as well. These are Banach spaces equipped with the norms
\[
\begin{split}
\norm{f}{W^{k,p}(\Omega)} &:= \Bigl( \sum\limits_{0\leq j \leq k} \norm{\partial_j f}{L^p(\Omega)}^p \Bigr)^\frac{1}{p} \quad (1 \leq p < \infty) \cma \\
\norm{f}{W^{k,\infty}(\Omega)} &:= \max\limits_{0\leq j \leq k} \norm{\partial_j f}{L^\infty(\Omega)} \fst
\end{split}
\]
\(W^{k,p}(\Omega)\) is separable if \(1 \leq p < \infty\) and reflexive if \(1 < p < \infty\).

\item We set \( H^k(\Omega) := W^{k,2}(\Omega) \), \(k \in \IN\). 

\item \(H^1_0(\Omega)\) denotes the Banach space comprising functions of \(H^1(\Omega)\) that attain zero boundary values in the sense of traces.

\item \(L^p_{loc}(\Omega)\) (\(W^{k,p}_{loc}(\Omega)\), \(k \in \IN\)), \(1 \leq p \leq \infty\), are the spaces of locally and integrable (weakly differentiable) functions. That is, \(f \in L^p_{loc}(\Omega)\) (\(f \in W^{k,p}_{loc}(\Omega)\)) holds if \(f \in L^p(U)\) (\(f \in W^{k,p}(U)\)) for all \(U \subset \subset \Omega\).

\item \(BV(\Omega)\) is the space of functions of bounded variation on \(\Omega\). We will barely make use of the properties of \(BV(\Omega)\) and hence refer to \cite{evans} for a precise definition.

\item \(C^{k,\alpha}(\overline{\Omega})\), \(k \in \IN\), \(0 \leq \alpha \leq 1\), are the spaces of H\"{o}lder-continuous functions on \(\overline{\Omega}\) equipped with the usual norms (see for instance \cite{adams}).

\item \(C^\infty_0(\Omega)\) denotes the set of smooth functions that have compact support in \(\Omega\).
\end{itemize}

Moreover, we will deal with spaces of vector-valued functions on the interval \(I = (0,T)\). In the following, let \(X\) be a Banach space.
\begin{itemize}
\item \(\bochnerex{L^p}{\openI}{X}\), \(1 \leq p \leq \infty\), is the space of \(X\)-valued Bochner-measurable functions whose integrals exist up to the \(p\)-th power. \(\bochnerex{L^p}{\openI}{X}\) are Banach spaces equipped with the norms
\[
\begin{split}
\norm{f}{\bochnerex{L^p}{\openI}{X}} &:= \Bigl( \intex{\openI}{}{\norm{f(t)}{X}^p}{dt} \Bigr)^\frac{1}{p} \quad (1 \leq p < \infty) \cma \\
\norm{f}{\bochnerex{L^\infty}{\openI}{X}} &:= \esssup\limits_{t \in I} \, \norm{f(t)}{X} \fst \\
\end{split}
\]
\(\bochnerex{L^p}{\openI}{X}\) is separable if \(1 \leq p < \infty\) and \(X\) is separable. \(\bochnerex{L^p}{\openI}{X}\) is reflexive if \(1 < p < \infty\) and \(X\) is reflexive.

\item By \(\bochnerex{C}{[0,T]}{X}\) we denote the space of continuous functions with values in \(X\). The norm is given by
\[
\norm{f}{\bochnerex{C}{[0,T]}{X}} := \sup\limits_{t \in [0,T]} \, \norm{f(t)}{X} \fst
\]

\item The spaces \(\bochnerex{C^k}{[0,T]}{X}\), \(k \in \IN\), are defined in an analogous way, where the derivatives up to order \(k\) are understood in the sense of Fr\'{e}chet (cf. \cite{zeidler1}).
\end{itemize}

We will often make use the following facts (cf. for instance \cite{emmrich,ruz}):
\begin{itemize}
\item \(\bochnerex{L^p}{\openI}{X'} \cong \bochnerex{L^q}{\openI}{X}'\) for \(1 < p, q < \infty\), \(\frac{1}{p} + \frac{1}{q} = 1\).
\item \(\bochnerex{L^p}{\openI}{L^p(\Omega)} \cong L^p(\Omega_T)\) for \(1 \leq p < \infty\).
\item \(\bochnerex{L^\infty}{\openI}{L^\infty(\Omega)} \subseteq L^\infty(\Omega_T)\).
\end{itemize}

Some more important facts involving vector-valued functions are cited in the Appendix. Note in particular the results on so-called ``evolution triples'' \ref{propevtrip} and Simon's compactness criterion \ref{thmsimon}, which will be of major importance in the sequel.

We assume the reader to be familiar with some well-known theorems and results from calculus and functional analysis, e.g. Vitali's convergence theorem, Fatou's lemma, the theorem of Arzel\`{a}--Ascoli, H\"{o}lder's and Young's inequality, as well as embedding results for Sobolev spaces, which we will frequently use in the course of the proofs.
For reference we suggest \cite{adams,alt,zeidler1,zeidler2}.

\subsection{Statement of the Theorems}

In this thesis, we consider the following system of partial differential equations:
\begin{equation} \label{chap0sys}
\left\lbrace
\begin{aligned}
u_t &- \partial_x \left( m(u) \, p_x \right) = 0 && \text{ in } (0,T) \times \Omega \\
p &= - \partial_x \Qex{u_x} && \text{ in } (0,T) \times \Omega \\
u_x &= p_x = 0 && \text{ on } (0,T) \times \partial \Omega \\
u &= u_0 && \text{ on } \{t = 0\} \times \Omega
\end{aligned}
\right.
\end{equation}
Here, \(\Omega = (-l, l)\), \(l > 0\), is an open, bounded interval. In addition, we formulate the following hypotheses:

\begin{enumerate}
\item[(H1)] The mobility \(m: \IR \to \IR^+_0\) is a continuous nonnegative function given by
\begin{equation}
 m(s) = \abs{s}^n \, \mobextra(s) \qquad \forall \, s \in \IR
\end{equation}
with growth exponent \(n \geq 1\) and a smooth function \(\mobextra \in C^1(\IR)\) satisfying
\begin{equation} \label{chap0mbounds}
\mobextra(s) \geq c_{\mobextra} > 0 \qquad \forall \, s \in \IR
\end{equation}
for a positive constant \(c_{\mobextra}\).

\item[(H2)] For the initial data \(u_0\) we assume that
\begin{align}
u_0 &\in H^1(\Omega) \label{chap0u0h1} \\
\intertext{and}
u_0 &\geq 0 \label{chap0u0pos}
\end{align}
on \(\overline{\Omega}\). Furthermore, we require that
\begin{equation} \label{chap0boundGu0}
\intspace{G(u_0)} < \infty,
\end{equation}
where the function \(G : \IR \to \IR^+\) is given by (cf. section \ref{chapentropy} below)
\[
G(t) = \int\limits_t^a{\int\limits_s^a{\frac{1}{m(r)} \,dr} \, ds}, \qquad a > \sup\limits_{x \in \overline{\Omega}} u_0(x) \fst
\]
\end{enumerate}

Note that condition \eqref{chap0boundGu0} is quite restrictive. In fact, it is easy to show that \eqref{chap0u0h1}\dashlist\eqref{chap0boundGu0} imply that \(\mu \left(\zeroset{u_0} \right) = 0 \) if \(n \geq 2\), and that \(u_0 > 0\) on \(\overline{\Omega}\) if \(n \geq 4\) (cf. \cite[Remark 4.1]{berfri} and the proof of Proposition \ref{proposition3pos} below). Hence, \(u_0\) may have compact support only in the case \(1 \leq n < 2\).

The aim of this work is to prove the following theorems. The first one yields existence of a weak solution in the following sense:

\begin{thm} \label{theorem1}
Assume (H1)--(H2). Then a pair of functions \((u, p)\) exists such that
\begin{align}
u &\in \bochner{L^\infty}{BV} \cap \bochnerex{C}{[0,T]}{L^q(\Omega)}, \quad 1 \leq q < \infty, \label{thm1regu} \\
u_t &\in \timespace, \label{thm1regut} \\
p &\in L^2(\Omega_T), \label{thm1regp} \\
u(0) &= u_0 \label{thm1initial}, \\
\intspace{u(t)} &= \intspace{u_0}, \quad t \in [0,T], \label{thm1consmass}
\end{align}
and a set \(S \subset (0,T)\) with \(\mu(S) = T\) such that
\begin{align}
u(t) &\in H^2(\Omega), \label{thm1regu2} \\
u(t) &\geq 0 \text{ on } \overline{\Omega}, \label{thm1nonneg} \\
u_x(t) &= 0 \text{ on } \partial \Omega, \label{thm1boundary} \\
p_x(t) &\in L^2_{loc}(\posset{u(t)}) \label{thm1regpx}
\end{align}
for all \(t \in S\). Furthermore, we have
\[
m(u) \, p_x \, \chi_{\possettime{u}} \in L^2(\Omega_T)
\]
and the following equations are satisfied:
\begin{alignat}{2}
\inttime{\pairh{u_t}{v}} &+ \intspacetimeex{\possettime{u}}{m(u) \, p_x \cdot v_x} = 0 &&\qquad \forall \, v \in \bochner{L^2}{H^1} \label{thm1eqa} \\
p(t) &= - \partial_x \frac{u_x(t)}{\Q{u_x(t)}} &&\qquad \forall \, t \in S \label{thm1eqb}
\end{alignat}
\qedthm
\end{thm}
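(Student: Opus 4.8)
The plan is to obtain $(u,p)$ as a limit of solutions of a family of regularized, non-degenerate problems, in the spirit of Bernis--Friedman \cite{berfri}, Gr\"un \cite{gruen1} and \cite{ansgia}. First I would replace the mobility $m$ by a strictly positive approximation $m_\varepsilon$, so that the flux $-m_\varepsilon(u)\,p_x$ becomes a uniformly fourth-order parabolic term, and, following \cite{berfri}, work with a regularized entropy density whose associated estimate becomes more and more singular on $\{u\le0\}$ as $\varepsilon\to0$. For each fixed $\varepsilon$ (and any further auxiliary parameters) I would solve the regularized version of \eqref{chap0sys} by a Galerkin method in the space variable, expanding $u$ in the eigenfunctions of $-\partial_{xx}$ with homogeneous Neumann boundary conditions; inserting the ansatz produces a system of ordinary differential equations for the coefficients, for which Carath\'eodory theory gives a local-in-time solution, and the a priori bounds below extend it to all of $[0,T]$.

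The second step is to derive the discrete analogues of the energy estimate \eqref{int0energy1} and the entropy estimate \eqref{int0entropy1}. Testing the Galerkin equation with the discrete pressure and passing to the Galerkin limit gives a bound for $u_\varepsilon$ in $\bochnerex{L^\infty}{I}{H^1(\Omega)}$ and for $\sqrt{m_\varepsilon(u_\varepsilon)}\,\partial_x p_\varepsilon$ in $L^2(\Omega_T)$, uniform in $\varepsilon$; testing with the derivative of the (regularized) entropy density yields a uniform bound for $\partial_{xx}u_\varepsilon/\Q{\partial_x u_\varepsilon}^{3/2}$ in $L^2(\Omega_T)$ together with $\intspace{G(u_\varepsilon(t))}\le\intspace{G(u_0)}$ up to regularization. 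At this point Lemma \ref{lemcrazy} is the crucial device: combining these two weak estimates it produces a uniform bound for $u_\varepsilon(t)$ in $H^2(\Omega)$ for almost every $t\in(0,T)$. From the equation itself, together with the bound on $\sqrt{m_\varepsilon(u_\varepsilon)}\,\partial_x p_\varepsilon$ and the embedding $H^1(\Omega)\embedding C(\overline{\Omega})$ that controls $m_\varepsilon(u_\varepsilon)$, one also gets a uniform bound for $\partial_t u_\varepsilon$ in $\timespace$.

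The third step is the passage to the limit $\varepsilon\to0$ and then in the remaining auxiliary parameters. The bound in $\bochnerex{L^\infty}{I}{H^1(\Omega)}$, the almost-everywhere-in-time bound in $H^2(\Omega)$, and the bound on $\partial_t u_\varepsilon$, fed into Simon's compactness criterion \ref{thmsimon} and the evolution-triple framework \ref{propevtrip}, give a subsequence with $u_\varepsilon\strongto u$ in $\bochnerex{C}{[0,T]}{L^q(\Omega)}$ for all $1\le q<\infty$, $\partial_t u_\varepsilon\weakto\partial_t u$ in $\timespace$, $u_\varepsilon(t)\weakto u(t)$ in $H^2(\Omega)$ for a.e. $t$ (whence \eqref{thm1regu2}), and $u_\varepsilon,\partial_x u_\varepsilon\to u,\partial_x u$ pointwise a.e. in $\Omega_T$. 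The pointwise convergence of $\partial_x u_\varepsilon$, the boundedness and continuity of $s\mapsto s/\Q{s}$, and the a.e.-in-time $H^2$-bound (which makes $\partial_x u_\varepsilon(t)/\Q{\partial_x u_\varepsilon(t)}$ bounded in $H^1(\Omega)$) let me pass to the limit in $p_\varepsilon(t)=-\partial_x\bigl(\partial_x u_\varepsilon(t)/\Q{\partial_x u_\varepsilon(t)}\bigr)$ and identify $p\in L^2(\Omega_T)$ satisfying \eqref{thm1eqb}; the $H^2$-regularity simultaneously gives $\partial_x u(t)=0$ on $\partial\Omega$. For the flux I would write $m_\varepsilon(u_\varepsilon)\,\partial_x p_\varepsilon=\sqrt{m_\varepsilon(u_\varepsilon)}\cdot\sqrt{m_\varepsilon(u_\varepsilon)}\,\partial_x p_\varepsilon$, which is bounded in $L^2(\Omega_T)$; on any compact subset of $\posset{u(t)}$ the mobility $m_\varepsilon(u_\varepsilon)$ is bounded below for small $\varepsilon$, so $\partial_x p_\varepsilon(t)\weakto\partial_x p(t)$ in $L^2_{loc}(\posset{u(t)})$, giving \eqref{thm1regpx}, while on $\{u=0\}$ the flux vanishes in the limit; passing to the limit in the Galerkin weak formulation then yields \eqref{thm1eqa} and $m(u)\,p_x\,\chi_{\possettime{u}}\in L^2(\Omega_T)$. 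Nonnegativity \eqref{thm1nonneg} follows as in \cite{berfri}: by Fatou and the uniform entropy bound, $\intspace{G(u(t))}\le\intspace{G(u_0)}<\infty$, and since $G$ is $+\infty$ on $(-\infty,0)$ this forces $u(t)\ge0$ on $\overline{\Omega}$. Mass conservation \eqref{thm1consmass} comes from taking $v\equiv1$ in \eqref{thm1eqa} and using the Neumann condition together with the $\bochnerex{C}{[0,T]}{L^1(\Omega)}$-continuity, which also gives \eqref{thm1initial}; finally the $\bochner{L^\infty}{BV}$-part of \eqref{thm1regu} follows from $H^1(\Omega)\embedding BV(\Omega)$ in one space dimension.

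The main obstacle, as emphasized in the introduction, is the lack of coercivity of the minimal surface operator: neither the surface-energy term $\intspace{\Q{\partial_x u}}$ nor the entropy dissipation $\intspacetime{\abs{\partial_{xx}u}^2/\Q{\partial_x u}^3}$ by itself controls any derivative of $u_\varepsilon$ in a reflexive space, so the whole construction hinges on Lemma \ref{lemcrazy} and its genuinely one-dimensional argument converting these two weak estimates into the a.e.-in-time $H^2$-bound; without it the limiting pressure cannot be identified at all. A secondary, more bookkeeping-type difficulty is to order the several approximation parameters so that both the energy and the entropy estimates survive each limit and so that the weak flux $m(u)\,p_x\,\chi_{\possettime{u}}$ is correctly identified in $L^2(\Omega_T)$.
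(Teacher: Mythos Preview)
Your overall architecture is right (regularize, Galerkin, energy plus entropy, Lemma \ref{lemcrazy}, compactness, identify the limits), but one concrete claim is wrong and it hides an entire regularization layer. You write that testing with the pressure ``gives a bound for $u_\varepsilon$ in $\bochner{L^\infty}{H^1}$''. It does not: the surface-energy functional $\intspace{\Q{u_x}}$ has only linear growth in $u_x$, so the energy identity \eqref{int0energy1} by itself controls $u_x$ only in $L^\infty(I;L^1(\Omega))$. That missing coercivity is exactly why the paper introduces the auxiliary pressure $p=-\partial_x\bigl(u_x/\Q{u_x}+\delta u_x\bigr)$: the extra $\delta u_x$ term yields the $\bochner{L^\infty}{H^1}$ bound (with constant $C(u_0,\delta)$) and in turn an $\bochner{L^2}{H^2}$ bound. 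These are what make the Galerkin limit go through and, more importantly, what allow the entropy estimate to be derived rigorously: without the $H^2$-regularity you cannot multiply \eqref{chap3sysepsilonb} by $-u_{xx}$ and integrate by parts to obtain \eqref{chap3entropyepsilon}, and at the discrete level you cannot simply test with $g_\varepsilon(u^N)$ because $g_\varepsilon(u^N)\notin V_N$ and the projection $P_N$ destroys the cancellation $m_\varepsilon(u^N)\,g_\varepsilon'(u^N)=1$.

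Consequently, Lemma \ref{lemcrazy} is not a device you invoke at the $\varepsilon$-level. In the paper it enters only at the very last step $\delta\to0$, after the $H^1$ and $H^2$ bounds coming from $\delta$ have been used up and one is left with nothing better than $\intspace{\Q{u^\delta_x(t)}}\le C$ from the energy and $\intspace{|u^\delta_{xx}(t)|^2/\Q{u^\delta_x(t)}^3}\le C(t)$ from the entropy via Lemma \ref{lemunibound}. This is also why the theorem only claims $u\in\bochner{L^\infty}{BV}$ together with $u(t)\in H^2(\Omega)$ for almost every $t$: the $BV$ regularity is not a harmless downgrade via ``$H^1\embedding BV$'' as you suggest, but genuinely all that survives of the $W^{1,1}$ bound once $\delta\to0$. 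The correct order is therefore: add $\delta u_x$ to the pressure; run Galerkin and the limits $\eta\to0$, $\varepsilon\to0$ with $\delta>0$ fixed (this is where H\"older continuity on $\overline{\Omega}_T$ and the Bernis--Friedman nonnegativity argument live); and only then let $\delta\to0$ using Lemmas \ref{lemunibound} and \ref{lemcrazy} to recover the a.e.-in-time $H^2$-bound and identify $p$ and the flux.
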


\begin{rem}
The regularity results of Theorem \ref{theorem1} are quite weak and may be interpreted as follows. We have \(u(t) \in H^2(\Omega)\) for almost all \(t \in (0,T)\). By standard embedding results for the one-dimensional setting, this implies that \(u(t) \in C^{1,\frac{1}{2}}(\overline{\Omega})\) for almost all \(t \in (0,T)\). That is, the function \(u(t)\) is indeed quite smooth in space, if we fix \(t \in S\). In particular, the results of Theorem \ref{theorem1} imply that a zero contact angle is attained at \(\partial \posset{u(t)}\) for almost all times \(t \in (0,T)\). However, the numbers \(\norm{u(t)}{H^2(\Omega)}\) might increase rapidly in time such that we do not even manage to prove that \(u \in \bochner{L^1}{H^2}\). \qedrem
\end{rem}

\begin{rem}
For comparison, in the setting of the standard thin-film equation in one space dimension (i.e. \(p = - u_{xx}\)) one typically obtains at least the following regularity results for the solution \(u\):
\begin{align*}
u &\in \bochner{L^2}{H^2} \cap \bochner{L^\infty}{H^1} \cap \bochnerex{C}{[0,T]}{L^q(\Omega)}, \quad 1 \leq q < \infty, \\
u_{xxx} &\in L^2_{loc}(\posset{u}_T)
\intertext{In particular, one is able to prove that}
u &\in C^{\frac{1}{8}, \frac{1}{2}}_{t,x}(\overline{\Omega}_T) \cma
\end{align*}
that is, \(u\) is globally H\"{o}lder-continuous on \(\overline{\Omega}_T\) (cf. for instance \cite{berfri}). \qedrem
\end{rem}

Note that the solution \(u\) obtained by Theorem \ref{theorem1} is nonnegative for almost all times. We may prove the following additional nonnegativity and positivity results, depending on the growth exponent \(n\) of the mobility \(m(.)\):

\begin{thm} \label{theorem2}
Assume (H1)--(H2) and let \((u, p)\) be a solution in the sense of Theorem \ref{theorem1}. In particular, let \(u(t) \geq 0\) on \(\overline{\Omega}\) for all \(t \in S\), where \(\mu(S) = T\). Then the following hold:
\begin{enumerate}
\item[(i)] If \(n \geq 2\), then \(\mu \left( \zeroset{u(t)} \right) = 0 \) for all \(t \in S\).
\item[(ii)] If \(n \geq \frac{8}{3}\), then \(u(t) > 0\) on \(\overline{\Omega}\) for all \(t \in S\).\qedthm
\end{enumerate}
\end{thm}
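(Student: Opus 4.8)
The plan is to obtain both assertions from the entropy estimate \eqref{int0entropy1}, which for the solution of Theorem \ref{theorem1} yields
\[
\intspace{G(u(t))} \leq \intspace{G(u_0)} < \infty \qquad \text{for every } t \in S ,
\]
combined with the behaviour of the entropy density \(G\) near the origin. By (H1), and in particular \eqref{chap0mbounds} together with the continuity of \(\mobextra\), one has two-sided bounds \(c\,r^{-n} \leq \frac{1}{m(r)} \leq C\,r^{-n}\) for \(r\) in a fixed right-neighbourhood of \(0\); integrating twice shows that \(G(0^{+})\) is finite when \(1 \leq n < 2\), whereas \(G(0^{+}) = +\infty\) when \(n \geq 2\), with the sharp rate \(G(\tau) \sim c_n\,\tau^{2-n}\) as \(\tau \to 0^{+}\) for \(n > 2\) (logarithmic blow-up for \(n = 2\)); moreover \(G\) is nonincreasing on \((0,a)\). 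These elementary one-variable estimates, in the spirit of \cite[Remark 4.1]{berfri}, are the only analytic input.

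For part (i) I would fix \(t \in S\) and assume \(\mu(\zeroset{u(t)}) > 0\). On \(\zeroset{u(t)}\) we have \(G(u(t)) = G(0) = +\infty\) because \(n \geq 2\), hence \(\intspace{G(u(t))} = +\infty\), contradicting the entropy bound. Therefore \(\mu(\zeroset{u(t)}) = 0\).

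For part (ii) I would fix \(t \in S\) and suppose, for contradiction, that \(u(t)(x_0) = 0\) for some \(x_0 \in \overline{\Omega}\). By \eqref{thm1regu2}, \(u(t) \in H^2(\Omega) \embedding C^{1,\frac{1}{2}}(\overline{\Omega})\), so \(u_x(t)\) is \(\tfrac{1}{2}\)-Hölder with some seminorm \(L \geq 0\). By \eqref{thm1nonneg} the point \(x_0\) is a global minimum of \(u(t)\); if \(x_0\) is interior then \(u_x(t)(x_0) = 0\), and if \(x_0 \in \partial\Omega\) then \(u_x(t)(x_0) = 0\) by \eqref{thm1boundary}. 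Hence \(\abs{u_x(t)(x)} \leq L\,\abs{x - x_0}^{1/2}\), and integrating from \(x_0\) gives
\[
0 \leq u(t)(x) = \absB{\int_{x_0}^{x} u_x(t)(y)\,dy} \leq \tfrac{2}{3}\,L\,\abs{x - x_0}^{3/2} \qquad \text{for all } x \in \overline{\Omega} .
\]
If \(L = 0\) this forces \(u(t) \equiv 0\), which as in part (i) contradicts the entropy bound (note \(u_0 \not\equiv 0\) by \eqref{chap0boundGu0}). If \(L > 0\), then since \(G\) is nonincreasing on \((0,a)\) and \(n > 2\), for \(x\) near \(x_0\) we get \(G(u(t)(x)) \geq G\bigl(\tfrac{2}{3}L\,\abs{x-x_0}^{3/2}\bigr) \geq c\,\abs{x-x_0}^{-\frac{3}{2}(n-2)}\); in one dimension this function is non-integrable near \(x_0\) exactly when \(\tfrac{3}{2}(n-2) \geq 1\), i.e. \(n \geq \tfrac{8}{3}\). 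So again \(\intspace{G(u(t))} = +\infty\), a contradiction, and we conclude \(u(t) > 0\) on \(\overline{\Omega}\).

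Conceptually there is no serious obstacle; the care goes into (a) turning the asymptotics of \(G\) into genuine two-sided power bounds from (H1); (b) making sure that the entropy bound \(\intspace{G(u(t))} \leq \intspace{G(u_0)}\) is available for \emph{all} \(t \in S\) (not merely almost every \(t\)) for the solution built in Theorem \ref{theorem1}, which should follow from the approximation scheme and lower semicontinuity of \(w \mapsto \intspace{G(w)}\); and (c) the exponent bookkeeping that fixes the thresholds: the \(\tfrac{1}{2}\)-Hölder regularity of \(u_x(t)\) produces vanishing of order \(\tfrac{3}{2}\) for \(u(t)\), and balancing \(\tfrac{3}{2}(n-2)\) against the borderline one-dimensional exponent \(1\) gives \(n = \tfrac{8}{3}\) — which is also exactly the place where the argument is genuinely restricted to one space dimension.
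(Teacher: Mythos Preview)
Your approach is correct and essentially matches the paper's. Part (ii) is identical: the paper also uses $u(t)\in H^2(\Omega)\hookrightarrow C^{1,1/2}(\overline\Omega)$, the vanishing of $u_x(t)$ at a zero (via interior minimum or the Neumann condition), the resulting $\tfrac{3}{2}$-order vanishing of $u(t)$, and the growth $G(s)\sim c\,s^{2-n}$ to force non-integrability when $n\geq \tfrac{8}{3}$.

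The one organizational difference is in (i). You propose to first secure the entropy bound $\int_\Omega G(u(t))\,dx \leq C(u_0)$ for the limit $u$ and then read off (i) directly from $G(0)=+\infty$. The paper instead proves (i) by working with the approximating sequence $\{u^\delta\}$: it uses the uniform convergence $u^\delta(t,\cdot)\to u(t,\cdot)$ on $\overline\Omega$ for $t\in S$ together with the already-established bound $\int_\Omega G(u^\delta(t))\,dx\leq C(u_0)$ (Proposition~\ref{proposition3pos}\,(ii)) to derive a contradiction if $\mu(\{u(t)=0\})>0$; only \emph{after} (i) does it pass the entropy bound to the limit via Fatou and then prove (ii). Your route is equally valid—Fatou plus $G(0^+)=+\infty$ gives the limit bound in one stroke—but, as you correctly flag in your point (b), it genuinely requires going back to the approximation scheme: the formal estimate \eqref{int0entropy1} is not available for the weak solution of Theorem~\ref{theorem1} except through this limiting procedure.
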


\begin{rem}
Note that Bernis and Friedman proved the following positivity result for solutions of the standard thin-film equation (see \cite[Theorem 4.1]{berfri}):

{\centering \textit{If \(n \geq 4\), then \(u > 0\) holds on \(\overline{\Omega}_T\).}

}
This result is by no means weaker than the second statement of Theorem \ref{theorem2}, which holds for \(n \geq \frac{8}{3}\). On the contrary, solutions \(u\) having property Theorem \ref{theorem2} (ii) may in fact be \(0\) on a set of measure zero in \(\Omega_T\). Moreover, Bernis and Friedman prove the very same result as Theorem \ref{theorem2} (ii) in the setting of the standard thin-film equation (see \cite[Corollary 4.5]{berfri}). \qedrem
\end{rem}

\subsection{Plan of the Proofs}

We will prove Theorem \ref{theorem1} by a multi-step approximation procedure. That is, we introduce several regularizations, depending on real numbers \(0 < \delta, \epsilon, \eta \leq 1\), in order to deal with the difficulties that arise from the degeneracy of the equation and the non-coercive structure of the pressure \(p\) in \eqref{chap0sys}. The regularized problem we will consider is the following:
\begin{equation} \label{chap0sysreg}
\left\lbrace
\begin{aligned}
u_t &- \partial_x \left( m_{\epsilon,\eta}(u) \, p_x \right) = 0 &&\text{in } (0,T) \times \Omega \\
p &= - \partial_x \leftA \Qex{u_x} + \delta u_x \rightA &&\text{in } (0,T) \times \Omega \\
u_x &= p_x = 0 &&\text{on } (0,T) \times \partial \Omega \\
u &= \approxu &&\text{on } \{t=0\} \times \Omega
\end{aligned}
\right.
\end{equation}
Here, the mobility \(m\) has been replaced by a bounded and non-degenerate approximation \(m_{\epsilon,\eta}\) satisfying 
\begin{equation*}
\epsilon \leq m_{\epsilon,\eta}(.) \leq \frac{1}{\eta} + 1 \fst
\end{equation*}

Our studies of system \eqref{chap0sysreg} will heavily rely on the following energy estimate:
\begin{equation} \label{chap0energy}
\sup\limits_{t \in [0,T]} \intspace{\Q{u_x(t)}} + \frac{\delta}{2} \norml{u_x(t)}^2 + \intspacetime{ m_{\epsilon,\eta}(u) \, \abs{p_x}^2 } \leq C(\approxu)
\end{equation}
In addition, one easily observes that solutions \(u\) of equation \eqref{chap0sysreg} conserve mass.

In section \ref{chapgalerkin}, we will prove existence of a weak solution of system \eqref{chap0sysreg} by means of the Faedo--Galerkin method. The boundedness of \(m_{\epsilon,\eta}\) provided by \(\eta > 0\) is a necessary ingredient in this context. However, note that \eqref{chap0energy} and the fact that \(W^{1,1}(\Omega) \embedding L^\infty(\Omega)\) -- which holds true in one space dimension only -- implies that any solution of \eqref{chap0sysreg} is uniformly bounded independently of \(\delta, \epsilon, \eta > 0\). Hence, we may easily let \(\eta \to 0\) to prove existence in the case of unbounded mobilities afterwards. This will be done in section \ref{chapunbounded}. In these and the following limit processes, the pressure \(p\) can be identified by straightforward arguments relying on the relation
\[
-\ddx \Qex{u_x} = - \frac{u_{xx}}{\Q{u_x}^3} \cma
\]
which holds true in one space dimension. Nevertheless, we will briefly outline an alternative way of identifying the nonlinearity. This method is based on the theory of monotone operators and would work in multiple space dimensions.

So far, the positivity of \(m_\epsilon(.) = m(.) + \epsilon\) allowed to control the pressure \(p\) and its gradient. In section \ref{secdegenerate}, we follow the lines of Bernis and Friedman \cite{berfri} to prove that the approximating solutions \(\sequence{u^\epsilon}{\epsilon > 0}\) are indeed H\"{o}lder-continuous on \(\overline{\Omega}_T\) independently of \(\epsilon > 0\) (but depending on \(\delta > 0\)). Therefore, a subsequence of \(\sequence{u^\epsilon}{\epsilon > 0}\) converges uniformly to a continuous function \(u\) as \(\epsilon \to 0\). This will help to control the gradient of the pressure \(p\) on the set \(\posset{\abs{u}}_T\) as \(\epsilon \to 0\). Moreover, we will prove that the so-called entropy estimate
\begin{equation} \label{chap0entropy}
\sup\limits_{t \in [0,T]} \intspace{G_\epsilon(u(t))} + \intspacetime{\dfrac{\abs{u_{xx}}^2}{\Q{u_x}^3} + \delta  \abs{u_{xx}}^2} \leq C(u_0)
\end{equation}
holds, where \(G_\epsilon: \IR \to \IR^+\) is a positive primitive of \(m_\epsilon\). Using this estimate, we will be able to control second-order derivatives of \(u\) and identify the pressure \(p\) in the limit. After letting \(\epsilon \to 0\), we will prove nonnegativity of the solution \(u\) by the arguments of Bernis and Friedman \cite{berfri}, relying on the boundedness of \(G_\epsilon(u)\) and the continuity of \(u\).

In section \ref{secfinal}, we will finally prove Theorems \ref{theorem1} and \ref{theorem2} by letting \(\delta \to 0\). In view of \eqref{chap0energy} and \eqref{chap0entropy}, note that the quality of the a priori estimates substantially changes when \(\delta \to 0\). In particular, the energy estimate \eqref{chap0energy} is not strong enough to infer any compactness results for the gradient of \(u\), and it is not clear whether the entropy estimate \eqref{chap0entropy} can help to improve this situation. Furthermore, results on non-parametric mean curvature type equations are not applicable either due to the higher-order structure of problem \eqref{chap0sysreg}. However, relying on both estimates \eqref{chap0energy} and \eqref{chap0entropy}, we will show that the approximating solutions \(\sequence{u^\delta(t)}{\delta > 0}\) are bounded in \(H^2(\Omega)\) for almost all times \(t \in (0,T)\) independently of \(\delta > 0\) (see Lemma \ref{lemcrazy}). This will be the most crucial step in the proof of Theorem \ref{theorem1} as we will henceforth be able to identify the pressure \(p\) for almost all \(t \in (0,T)\). Moreover, we will immediately obtain that \(\sequence{u^\delta(t)}{\delta > 0}\) converges to a continuous function \(u(t)\) for almost all \(t \in (0,T)\) as \(\delta \to 0\) and hence argue similarly as before to identify the flux \(m(u(t)) \, p_x(t)\) on the positivity set \(\posset{u(t)}\). This will prove Theorem \ref{theorem1}. For the proof of the additional nonnegativity and positivity results of Theorem \ref{theorem2}, we will basically repeat the arguments from the previous section.




\clearsection

\section{Approximate Solutions} \label{secapproximate}

In the first step, we will apply the Faedo--Galerkin method to prove existence of a weak solution \((u,p)\) of a regularized version of system \eqref{chap0sys}. We consider the system
\begin{equation} \label{chap1sys}
\left\lbrace
\begin{aligned}
u_t &- \partial_x \left( m_{\epsilon,\eta}(u) \, p_x \right) = 0 &&\text{in } (0,T) \times \Omega \cma \\
p &= - \partial_x \leftA \Qex{u_x} + \delta u_x \rightA &&\text{in } (0,T) \times \Omega \cma \\
u_x &= p_x = 0 &&\text{on } (0,T) \times \partial \Omega \cma \\
u &= \approxu &&\text{on } \{t=0\} \times\Omega \cma
\end{aligned}
\right.
\end{equation}
where \(0 < \delta, \epsilon, \eta \leq 1\) are arbitrary. Here, the regularized mobility \(m_{\epsilon,\eta} \) is given by
\begin{equation}
m_{\epsilon,\eta}(s) := \dfrac{m(s)}{1 + \eta \, m(s)} + \epsilon \qquad \forall \, s \in \IR \fst \label{chap1mreg}
\end{equation}
Note in particular that
\begin{align}
&m_{\epsilon,\eta}(s) \leq \dfrac{1}{\eta} + 1 \cma \label {chap1mregbound} \\ 
&m_{\epsilon,\eta}(s) \geq \epsilon \label {chap1mregpos}
\end{align}
hold for \(0 < \epsilon, \eta \leq 1\) and arbitrary \(s \in \IR\).

Moreover, for \(0 < \delta \leq 1\) we define the nonlinear operator \(A_\delta : H^1(\Omega) \to H^1(\Omega)'\) by
\begin{equation}
\pairh{A_\delta(u)}{v} = \intspace{\leftA \Qex{u_x} + \delta u_x \rightA \cdot v_x } \qquad \forall \, u, v \in H^1(\Omega) \fst \label{chap1Adeltadef}
\end{equation}
We will make use of the following, rather rough bounds on \(A_\delta\), which hold obviously true for \(0 < \delta \leq 1\) and arbitrary \(u, v, w \in H^1(\Omega)\):
\begin{align}
\pairh{A_\delta(u)}{v} &\leq (1 + \delta) \, \norm{u}{H^1(\Omega)} \norm{v}{H^1(\Omega)} \label{chap1Adeltabound} \\
\pairh{A_\delta(w)}{w} &\geq \delta \, \norm{w_x}{L^2(\Omega)}^2 \label{chap1Adeltacoerv} 
\end{align}
In view of \eqref{chap1Adeltadef} and \eqref{chap1Adeltabound}, the nonlinear operator \(A_\delta : H^1(\Omega) \to H^1(\Omega)'\) induces an operator \(\tilde{A}_\delta : \bochner{L^2}{H^1} \to \bochnerex{L^2}{I}{H^1(\Omega)'}\)
given by \(\bigl(\tilde{A}_\delta (u)\bigr)(t) = A_\delta\bigl(u(t)\bigr)\). Using this relation, we will identify \(A_\delta\) and \(\tilde{A}_\delta\) in the sequel.

Furthermore, one observes that the operator \(A_\delta\) is monotone and hemicontinuous for \(\delta \geq 0\). This provides us with an elegant method of identifying \(A_\delta(u)\) and in turn the pressure \(p\) in the limit. However, it will turn out that \(p\) can be identified even more easily in our setting by straightforward arguments such that we will not need to make use of the theory of monotone operators throughout this work. In this context, also confer Remark \ref{remarkmon} below. Nevertheless, we will use the definition of \(A_\delta\) \eqref{chap1Adeltadef} for shorthand notation and in order to apply the estimates \eqref{chap1Adeltabound} and \eqref{chap1Adeltacoerv} directly.

\subsection{Galerkin Approximation} \label{chapgalerkin}

Our existence result for problem \eqref{chap1sys} is stated in the following proposition:

\begin{prop}\label{proposition1}
Assume \assumptions and let \(0 < \delta, \epsilon, \eta \leq 1\). Then a pair of functions \((u, p) = (u^{\delta, \epsilon, \eta}, p^{\delta, \epsilon, \eta})\) exists such that
\begin{align}
u &\in \bochner{L^2}{H^2} \cap \bochner{L^\infty}{H^1} \cap \bochnerex{C}{[0,T]}{L^2(\Omega)}, \label{prop1regu} \\
u_t &\in \timespace, \label{prop1regut} \\
p &\in \bochner{L^2}{H^1}, \label{prop1regp} \\
u_x(t) &\in H^1_0(\Omega) \text{ a.e. in } (0,T) \cma \label{prop1boundary} \\
u(0) &= \approxu \label{prop1initial},
\end{align}
which solves \eqref{chap1sys} in the following sense:
\begin{subequations} \label{prop1sys}
\begin{align}
\inttime{\pairh{u_t}{v}} &+ \intspacetime{m_{\epsilon,\eta}(u) \; p_x \cdot v_x} = 0 &\forall& \, v \in \bochner{L^2}{H^1} \label{prop1sysa} \\
p &= - \ddx \leftA \Qex{u_x} + \delta u_x \rightA \label{prop1sysb}
\end{align}
\end{subequations}

Furthermore, the solution satisfies the energy estimate
\begin{equation} \label{prop1energy}
\begin{split}
\intspace{\Q{u_x(t)}} + \frac{\delta}{2} \norml{u_x(t)}^2 + \intspacetime{ m_{\epsilon,\eta}(u_x) \, \abs{p_x}^2 } \leq C(\approxu)
\end{split}
\end{equation}
for almost all \(t \in (0,T)\). \qedthm
\end{prop}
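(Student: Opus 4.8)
The plan is to construct a solution of \eqref{chap1sys} by the Faedo--Galerkin method. As a Galerkin basis I would take the smooth eigenfunctions $\sequence{\basisfcn_k}{k \in \IN}$ of the Neumann Laplacian on $\Omega$, normalized in $L^2(\Omega)$. Since $-\partial_{xx} \basisfcn_k = \lambda_k \basisfcn_k$, the spaces $\discspace_N := \operatorname{span}\{\basisfcn_1, \dots, \basisfcn_N\}$ are orthogonal in both $L^2(\Omega)$ and $H^1(\Omega)$, so the $L^2$-orthogonal projection $\proj_N$ onto $\discspace_N$ coincides with the $H^1$-orthogonal projection, and -- crucially -- $\partial_{xx} \discspace_N \subseteq \discspace_N$. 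For $u_N(t) \in \discspace_N$ I set
\[
p_N(t) := \proj_N \bigl( -\ddx(\Qex{\partial_x u_N(t)} + \delta \, \partial_x u_N(t)) \bigr) \in \discspace_N
\]
(a well-defined smooth function, as $u_N(t)$ is a finite combination of the $\basisfcn_k$) and look for $u_N$ with $u_N(0) = \proj_N \approxu$ and
\[
\prodl{\partial_t u_N}{v} + \intspace{m_{\epsilon,\eta}(u_N) \, \partial_x p_N \cdot v_x} = 0 \qquad \forall \, v \in \discspace_N \fst
\]
Writing $u_N = \sum_{k=1}^N c_k(t) \basisfcn_k$ and inserting the coefficients of $p_N$ (which depend smoothly on $c$) turns this into an ODE system $\dot c = F(c)$ with $F$ locally Lipschitz, so the Picard--Lindel\"{o}f theorem yields a local solution.

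The heart of the argument is the energy estimate on the Galerkin level. The natural test function is the ``exact'' pressure $\tilde p_N := -\ddx(\Qex{\partial_x u_N} + \delta \, \partial_x u_N)$, which in general does \emph{not} lie in $\discspace_N$; however $\partial_t u_N \in \discspace_N$, so $\prodl{\partial_t u_N}{\tilde p_N} = \prodl{\partial_t u_N}{\proj_N \tilde p_N} = \prodl{\partial_t u_N}{p_N}$, and $p_N$ \emph{is} admissible. Testing with $v = p_N$, and using that (after integrating by parts in $x$, with vanishing boundary terms since $\partial_x u_N = 0$ on $\partial\Omega$) $\prodl{\partial_t u_N}{\tilde p_N}$ equals the time derivative of $\intspace{\Q{\partial_x u_N} + \frac{\delta}{2} \abs{\partial_x u_N}^2}$, one obtains
\[
\frac{d}{dt} \intspace{\Q{\partial_x u_N} + \frac{\delta}{2} \abs{\partial_x u_N}^2} + \intspace{m_{\epsilon,\eta}(u_N) \, \abs{\partial_x p_N}^2} = 0 \fst
\]
Integrating in time, and using $\norm{(\proj_N \approxu)_x}{L^2(\Omega)} \leq \norm{\partial_x \approxu}{L^2(\Omega)}$ (as $\proj_N$ is the $H^1$-projection) together with $\abs{s} \leq \Q{s}$, gives \eqref{prop1energy} with a constant $C(\approxu)$ uniform in $N, \delta, \epsilon, \eta$. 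Together with conservation of mass, $\intspace{u_N(t)} = \intspace{\proj_N \approxu} = \intspace{\approxu}$, and $W^{1,1}(\Omega) \embedding L^\infty(\Omega)$, this also yields a uniform $L^\infty(\Omega_T)$ bound on $u_N$; the $\delta$-term of the estimate then bounds $\abs{c(t)}$ on the maximal interval, so the Galerkin solution extends to $[0,T]$.

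Next I would collect further bounds, uniform in $N$ but now allowed to depend on $\delta, \epsilon, \eta$: from $m_{\epsilon,\eta} \geq \epsilon$ and $\intspace{p_N} = 0$ one gets $p_N$ bounded in $\bochner{L^2}{H^1}$; testing the pointwise identity $\tilde p_N = -\bigl( (1+\abs{\partial_x u_N}^2)^{-3/2} + \delta \bigr) \partial_{xx} u_N$ with $\partial_{xx} u_N \in \discspace_N$ -- so that $\prodl{\tilde p_N}{\partial_{xx} u_N} = \prodl{p_N}{\partial_{xx} u_N} = -\prodl{\partial_x p_N}{\partial_x u_N}$ -- yields $\delta \norm{\partial_{xx} u_N}{L^2(\Omega)}^2 \leq \norm{\partial_x p_N}{L^2(\Omega)} \norm{\partial_x u_N}{L^2(\Omega)}$, hence $u_N$ is bounded in $\bochner{L^2}{H^2}$; and the equation itself gives $\norm{\partial_t u_N}{\timespace} \leq (\frac{1}{\eta} + 1) \norm{\partial_x p_N}{L^2(\Omega_T)}$. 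Passing to a subsequence, $u_N \weakstarto u$ in $\bochner{L^\infty}{H^1}$, $u_N \weakto u$ in $\bochner{L^2}{H^2}$, $\partial_t u_N \weakto u_t$ in $\timespace$, $p_N \weakto p$ in $\bochner{L^2}{H^1}$, and by Simon's compactness criterion \ref{thmsimon}, $u_N \strongto u$ in $\bochner{L^2}{H^1}$ and in $\bochnerex{C}{[0,T]}{L^2(\Omega)}$; in particular $\partial_x u_N \strongto u_x$ a.e.\ in $\Omega_T$ (along a further subsequence), $u(0) = \approxu$, and $u_x(t) \in H^1_0(\Omega)$ a.e.\ (by weak closedness of $\bochnerex{L^2}{I}{H^1_0(\Omega)}$). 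Since $\abs{m_{\epsilon,\eta}(u_N)} \leq \frac{1}{\eta} + 1$ and the curvature term is bounded by one, dominated convergence gives $m_{\epsilon,\eta}(u_N) \strongto m_{\epsilon,\eta}(u)$ and $\Qex{\partial_x u_N} + \delta \, \partial_x u_N \strongto \Qex{u_x} + \delta u_x$ in $L^2(\Omega_T)$; combined with $\partial_x p_N \weakto p_x$ in $L^2(\Omega_T)$, passing to the limit in the Galerkin equation (tested with functions $\varphi(t) \basisfcn_j$, which span a dense subspace of $\bochner{L^2}{H^1}$) yields \eqref{prop1sysa}, and passing to the limit in $\prodl{p_N}{v} = \intspace{(\Qex{\partial_x u_N} + \delta \, \partial_x u_N) \, v_x}$ (valid for $v \in \discspace_N$) yields the weak form of \eqref{prop1sysb}, which, since $u \in \bochner{L^2}{H^2}$ makes its right-hand side an $L^2(\Omega_T)$ function, holds pointwise a.e. Finally, \eqref{prop1energy} for $(u,p)$ follows from the Galerkin energy identity by weak lower semicontinuity (convexity of $s \mapsto \sqrt{1+s^2}$, lower semicontinuity of the $L^2$-norm, and $\sqrt{m_{\epsilon,\eta}(u_N)} \, \partial_x p_N \weakto \sqrt{m_{\epsilon,\eta}(u)} \, p_x$ in $L^2(\Omega_T)$).

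The step I expect to be the real obstacle is the energy estimate within the Galerkin scheme. In the linear-curvature case $p = -u_{xx}$ one has $-\partial_{xx} u_N \in \discspace_N$ and tests directly; here the curvature-type pressure $\tilde p_N$ leaves $\discspace_N$, and an arbitrary choice of test function destroys the sign of the dissipative term $\intspace{m_{\epsilon,\eta}(u_N) \, \abs{\partial_x p_N}^2}$. Taking the \emph{projected} pressure $p_N = \proj_N \tilde p_N$ as the Galerkin unknown and using $\partial_t u_N \in \discspace_N$ to replace $\tilde p_N$ by $p_N$ in the time-derivative term is exactly what saves the energy identity; the same invariance $\partial_{xx} \discspace_N \subseteq \discspace_N$ is then what delivers the $H^2$-bound. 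Everything else is by now routine within this circle of ideas.
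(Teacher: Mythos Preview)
Your proposal is correct and follows essentially the same approach as the paper's proof: Galerkin approximation in the Neumann--Laplace eigenbasis, the projected pressure \(p_N = \proj_N \tilde p_N\) as the second unknown, the energy identity obtained by testing with \(p_N\) and using \(\partial_t u_N \in \discspace_N\) to trade \(p_N\) for \(\tilde p_N\) in the time-derivative term, the \(H^2\)-bound via \(\partial_{xx} u_N \in \discspace_N\), and the limit passage by Simon's compactness plus pointwise convergence of \(m_{\epsilon,\eta}(u_N)\) and \((1+\abs{\partial_x u_N}^2)^{-3/2}\). The only cosmetic differences are that the paper invokes Peano rather than Picard--Lindel\"of for the ODE (continuity of \(m_{\epsilon,\eta}\) suffices, though your local Lipschitz claim is also fine under (H1)), and that the paper uses Fatou's lemma for the first term of the energy estimate in the limit where you invoke convexity; both are valid.
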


Following the lines of \cite{gruen2}, we intend to prove Proposition \ref{proposition1} by means of the Faedo--Galerkin method. That is, we discretize equations \eqref{chap1sys} in space and solve for \(\gk \in \IN\) an \((\gk+1)\)-dimensional system of first-order ordinary differential equations. Afterwards, we will let \(\gk \to \infty\) and obtain a solution in the sense of Proposition \ref{proposition1}.

As ansatz functions we choose the \(L^2(\Omega)\)-normalized eigenfunctions of the Laplacian subject to Neumann boundary conditions, denoted by \(\sequence{\basisfcn_j}{j \in \IN}\). That is, the functions \(\sequence{\basisfcn_j}{j \in \IN}\) satisfy
\begin{equation*}
\left\lbrace
\begin{aligned}
-\basisfcn_{j,xx} &= \lambda_j \basisfcn_j &&\text{ in } \Omega \cma \\
\basisfcn_{j,x} &= 0 &&\text{ on } \partial \Omega \cma \\
\norml{\basisfcn_j} &= 1 \fst &&\\
\end{aligned}
\right.
\end{equation*}
where \(0 < \lambda_1 < \lambda_2 < \dotsc\) are the positive eigenvalues. Of course, for \(\Omega = (-l,l) \subset \IR^1\) we immediately obtain
\[
\basisfcn_j (x) = \frac{1}{\sqrt{l}} \cdot \cos \left( \sqrt{\lambda_j} \, x + \frac{\pi}{2} j \right) \cma \qquad \lambda_j = \left( \frac{\pi}{2l} j \right)^2 \cma \qquad j \in \IN \fst
\]
In addition we set
\[
\basisfcn_0 \equiv \mu(\Omega)^{-1/2} \equiv \frac{1}{\sqrt{2l}} \fst
\]
By the spectral theorem (see for instance \cite{alt}), the set \(\sequence{\basisfcn_j}{j \in \IN_0}\) is an orthonormal basis of \(L^2(\Omega)\). Moreover, the functions \(\sequence{\basisfcn_j}{j \in \IN_0}\) are obviously orthogonal in \(H^1(\Omega)\) and we observe that \(\sequence{\basisfcn_j}{j \in \IN_0} \subseteq C^\infty(\overline{\Omega}) \).


For \(\gk \in \IN\) we define
\[
\discspace_\gk := \operatorname{span} \lbrace\basisfcn_j : 0 \leq j \leq \gk \rbrace 
\]
and consider the orthogonal projection \(\proj_\gk : L^2(\Omega) \to  \discspace_\gk\) given by
\[
\proj_\gk v := \sum\limits_{i=0}^\gk \prodl{v}{\basisfcn_i} \, \basisfcn_i \qquad \forall \, v \in L^2(\Omega) \fst
\]
Finally, employing the \(H^1(\Omega)\)-orthogonality of the functions \(\sequence{\basisfcn_j}{j \in \IN_0}\) we easily deduce that
\begin{equation} \label{chap1boundproj}
\proj_\gk \in \mathcal{L}\left( H^1(\Omega); H^1(\Omega) \right) \cma \qquad \norm{\proj_\gk}{\mathcal{L}\left( H^1(\Omega); H^1(\Omega) \right)} = 1 \fst
\end{equation}

%
%


Now, for fixed \(\gk \in \IN\) we consider the Galerkin approximations
\begin{subequations} \label{chap1discreteup}
\begin{align}
u^\gk(t,x) &= \sum\limits_{i=0}^\gk{ c^\gk_i(t) \, \basisfcn_i(x)} \cma \label{chap1discreteu} \\
p^\gk(t,x) &= \sum\limits_{i=0}^\gk{ d^\gk_i(t) \, \basisfcn_i(x)} \cma \label{chap1discretep}
\end{align}
\end{subequations}
where the functions \(c^\gk_j, d^\gk_j : [0,T] \to \IR\) have to be determined from the following system of ordinary differential equations:
\begin{subequations} \label{chap1discretesys}
\begin{align}
\prodl{\ddt u^\gk(t)}{\basisfcn_j} &+ \prodl{m_{\epsilon,\eta}(u^\gk(t)) \, p_x^\gk(t)}{\basisfcn_{j,x}} = 0 \cma & j &= 0,1,\dotsc,\gk \label{chap1discretesysa} \\
\prodl{p^\gk(t)}{\basisfcn_k} &= \pairh{A_\delta (u^\gk(t))}{\basisfcn_k} \cma & k &= 0,1,\dotsc,\gk \label{chap1discretesysb} \\
u^\gk(0) &= u^N_0 := \proj_\gk \approxu \label{chap1discretesysc}
\end{align}
\end{subequations}

We summarize our existence result for system \eqref{chap1discretesys} in the following Lemma:

\begin{lem} \label{lemdisc}
Assume \assumptions and let \(0 < \delta, \epsilon, \eta \leq 1\). Then, for \(\gk \in \IN\) there exists a pair of functions \((u^N,p^N)\) such that \(u^N\) and \(p^N\) have the form \eqref{chap1discreteup} and satisfy equations \eqref{chap1discretesys}.

Furthermore, the energy estimate
\begin{equation} \label{lemdiscenergy}
\intex{\Omega}{ }{\Q{u^\gk_x(t)}}{dx} + \frac{\delta}{2} \norml{u^\gk_x(t)}^2 + \intspacetime{ m_{\epsilon,\eta}(u^\gk_x(s)) \, \abs{p^\gk_x(s)}^2 } \leq C(\approxu)
\end{equation}
holds for \(t \in [0,T]\). \qedthm
\end{lem}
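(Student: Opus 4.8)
The plan is to reduce the coupled system \eqref{chap1discretesys} to a genuine initial-value problem for the coefficient vector $c^\gk = (c^\gk_0,\dots,c^\gk_\gk)$, obtain a local solution by the Picard--Lindel\"{o}f theorem, and then use the energy estimate \eqref{lemdiscenergy} together with conservation of mass to exclude finite-time blow-up. The first step is to eliminate the pressure coefficients: since the $\basisfcn_j$ are orthonormal in $L^2(\Omega)$, \eqref{chap1discretesysb} gives explicitly $d^\gk_k(t) = \pairh{A_\delta(u^\gk(t))}{\basisfcn_k}$, which by \eqref{chap1Adeltadef} is a function of $c^\gk(t)$ alone, and a smooth one since $s \mapsto \Qex{s}$ has bounded derivative. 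Substituting these into \eqref{chap1discretesysa} turns the system into $\ddt c^\gk_j(t) = F_j\bigl(c^\gk(t)\bigr)$, $j = 0,\dots,\gk$, with $F = (F_0,\dots,F_\gk)$ locally Lipschitz on $\IR^{\gk+1}$ --- here one uses that $m$, hence $m_{\epsilon,\eta}$ from \eqref{chap1mreg}, is locally Lipschitz (as $m(s) = \abs{s}^n\mobextra(s)$ with $n \ge 1$ and $\mobextra \in C^1(\IR)$, and $t \mapsto t/(1+\eta t)$ is smooth on $t \ge 0$) --- and the initial datum $c^\gk_j(0) = \prodl{\approxu}{\basisfcn_j}$ coming from \eqref{chap1discretesysc}. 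Picard--Lindel\"{o}f then yields a unique $C^1$ solution on a maximal interval $[0,T_\gk)$; as $u^\gk$ and $p^\gk$ are finite combinations of the smooth $\basisfcn_j$ with $C^1$ coefficients, all manipulations below are rigorous.

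Next I would establish \eqref{lemdiscenergy}. At fixed $t$, multiplying \eqref{chap1discretesysa} by $d^\gk_j(t)$ and summing over $j$ gives, using $p^\gk(t) = \sum_j d^\gk_j(t)\basisfcn_j \in \discspace_\gk$ and the fact that \eqref{chap1discretesysb} extends by linearity to $\prodl{p^\gk(t)}{v} = \pairh{A_\delta(u^\gk(t))}{v}$ for all $v \in \discspace_\gk$ (in particular $v = u^\gk_t(t) \in \discspace_\gk$),
\[
\pairh{A_\delta(u^\gk(t))}{u^\gk_t(t)} + \intspace{m_{\epsilon,\eta}(u^\gk(t))\,\abs{p^\gk_x(t)}^2} = 0 .
\]
By \eqref{chap1Adeltadef} and the chain rule the first term equals $\ddt\bigl(\intspace{\Q{u^\gk_x(t)}} + \tfrac{\delta}{2}\norml{u^\gk_x(t)}^2\bigr)$. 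Integrating over $(0,t)$ and using \eqref{chap1discretesysc} together with $\norm{\proj_\gk}{\mathcal{L}(H^1(\Omega);H^1(\Omega))} = 1$ from \eqref{chap1boundproj} to estimate the data --- $\intspace{\Q{u^N_{0,x}}} \le \mu(\Omega) + \mu(\Omega)^{1/2}\norml{u^N_{0,x}}$ and $\tfrac{\delta}{2}\norml{u^N_{0,x}}^2 \le \tfrac12\norm{\approxu}{H^1(\Omega)}^2$ since $\delta \le 1$ --- yields \eqref{lemdiscenergy} on $[0,T_\gk)$.

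Finally I would pass from local to global existence, which is the only step needing genuine care because the operator $A_\delta$ is merely weakly coercive (cf. \eqref{chap1Adeltacoerv}), so the a priori control on $u^\gk$ comes only from the surface-energy term. Choosing $j = 0$ in \eqref{chap1discretesysa} gives $\ddt\prodl{u^\gk(t)}{\basisfcn_0} = 0$ because $\basisfcn_0$ is constant, i.e. $c^\gk_0$ is constant in time (conservation of mass). On the other hand \eqref{lemdiscenergy}, combined with $\Q{s} \ge \abs{s}$, bounds $\norm{u^\gk_x(t)}{L^1(\Omega)}$; since in one space dimension $\abs{u^\gk(t,x) - u^\gk(t,y)} \le \norm{u^\gk_x(t)}{L^1(\Omega)}$ for all $x,y \in \overline{\Omega}$, this together with the fixed mean value bounds $\norm{u^\gk(t)}{L^\infty(\Omega)}$, hence $\norm{u^\gk(t)}{L^2(\Omega)}^2 = \sum_j \abs{c^\gk_j(t)}^2$, uniformly on $[0,T_\gk)$. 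Thus the coefficient vector stays in a bounded subset of $\IR^{\gk+1}$ and cannot escape to infinity, so the standard continuation criterion forces $T_\gk \ge T$; the estimate \eqref{lemdiscenergy} then persists on all of $[0,T]$, completing the proof. The crucial ingredient in this last step is the embedding $W^{1,1}(\Omega) \embedding L^\infty(\Omega)$, valid in one space dimension only, which converts the weak $L^1$-bound on $u^\gk_x$ into the $L^2$-bound on $u^\gk$ needed to close the continuation argument.
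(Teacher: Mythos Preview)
Your argument is correct; the two proofs differ only in the choice of local existence theorem and, more interestingly, in the continuation argument. The paper applies Peano's theorem (continuity of the right-hand side suffices), whereas you invoke Picard--Lindel\"of after checking local Lipschitz continuity; this is fine and gives uniqueness of the Galerkin solution as a by-product. For globality, the paper uses the $\delta$-term in \eqref{lemdiscenergy} directly: from $\tfrac{\delta}{2}\norml{u^\gk_x(t)}^2 \le C(\approxu)$ and the $L^2(\Omega)$-orthogonality of the $\basisfcn_{j,x}$ one reads off $\abs{c^\gk_j(t)}^2 \le \lambda_1^{-1}\norml{u^\gk_x(t)}^2 \le C(\approxu,\delta)$ for $j \ge 1$, and then re-applies Peano on intervals of fixed length. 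Your route instead exploits the surface-energy term to bound $\norm{u^\gk_x(t)}{L^1(\Omega)}$ and then the one-dimensional embedding $W^{1,1}(\Omega) \embedding L^\infty(\Omega)$ together with conservation of mass to control $\norml{u^\gk(t)}$. Your bound is $\delta$-independent, which is pleasant but not needed here since $\delta$ is fixed; the paper's bound, by contrast, does not rely on the one-dimensional embedding at this stage and would work verbatim in higher space dimensions (cf. the discussion in Section~\ref{sechigher}). Either approach closes the argument.
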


\begin{proof}
Inserting \eqref{chap1discreteup} into \eqref{chap1discretesys}, we may employ the orthogonality of the functions \(\sequence{\basisfcn_j}{j \in \IN_0}\) to obtain by at first formal calculation
\begin{subequations} \label{prfdiscsys1}
\begin{align}
\ddt c^\gk_j(t) &+ \sum\limits_{i=0}^\gk{d^\gk_i(t) \prodl{m_{\epsilon,\eta}(u^\gk(t)) \, \basisfcn_{i,x}}{\basisfcn_{j,x}}} = 0 \cma & j &= 0,1,\dotsc,\gk \cma  \label{prfdiscsys1a} \\
d^\gk_k(t) &= \pairh{A_\delta (u^\gk(t))}{\basisfcn_k} \cma & k &= 0,1,\dotsc,\gk \cma \label{prfdiscsys1b} \\
c^\gk_l(0) &= \prodl{\approxu}{\basisfcn_l} \cma & l &= 0,1,\dotsc,\gk \fst \label{prfdiscsys1c}
\end{align}
\end{subequations}
Eliminating \(d^\gk_k(t)\) from \eqref{prfdiscsys1a} and \eqref{prfdiscsys1b} gives
\[
\ddt c^\gk_j(t) = -\sum\limits_{i=0}^\gk \prodl{m_{\epsilon,\eta}(u^\gk(t)) \, \basisfcn_{i,x}}{\basisfcn_{j,x}} \, \pairh{A_\delta (u^\gk(t))}{\basisfcn_i}
\]
for \(j = 0,1,\dotsc,\gk\). Hence, for \(\mathbf{a} = \left( a_i \right)_{i=0}^\gk \in \IR^{\gk+1}\) and \(j = 0,1,\dotsc,\gk\) we define
\[
f^\gk_j(\mathbf{a}) := -\sum\limits_{i=0}^\gk \prodl{m_{\epsilon,\eta}(\textstyle\sum_k{a_k \basisfcn_k}) \, \basisfcn_{i,x}}{\basisfcn_{j,x}} \, \pairh{A_\delta(\textstyle\sum_k{a_k \basisfcn_k})}{\basisfcn_i}
\]
and set
\[
\mathbf{c}^\gk(.) := \left( c^\gk_j(.) \right)_{j=0}^\gk, \quad \mathbf{f}^\gk(.) := \left( f^\gk_j(.) \right)_{j=0}^\gk, \quad \mathbf{c}^\gk_\mathbf{0} := \bigl( \prodl{\approxu}{\basisfcn_j} \bigr)_{j=0}^\gk \cma
\]
such that \eqref{prfdiscsys1} can finally be written as
\begin{subequations} \label{prfdiscsys3}
\begin{align}
\ddt \mathbf{c}^\gk(t) &= \mathbf{f}^\gk(\mathbf{c}^\gk(t)) \cma \label{prfdiscsys3a} \\
\mathbf{c}^\gk(0) &= \mathbf{c}^\gk_\mathbf{0} \fst \label{prfdiscsys3b}
\end{align}
\end{subequations}

We intend to use Peano's theorem \ref{thmpeano} to prove existence of a local solution of the system of ordinary differential equations \eqref{prfdiscsys3}. To this end, we choose an arbitrary \(r > 0\) and observe that by the definition of \(m_{\epsilon, \eta}\) and \(A_\delta\) (and for instance Lebesgue's dominated convergence theorem) \(\mathbf{f}^\gk\) is continuous on \(B_r(\mathbf{c}_\mathbf{0}^\gk)\), where
\[
B_r(\mathbf{c}_\mathbf{0}^\gk) := \lbrace \mathbf{a} \in \IR^{\gk+1}: \norm{\mathbf{a} - \mathbf{c}^N_\mathbf{0}}{\infty} \leq r \rbrace \fst
\]
Here and in the following, \(\norm{\mathbf{a}}{\infty} := \max\limits_{0\leq i \leq \gk} \abs{a_i}\) denotes the maximum norm on \(\IR^{\gk + 1}\).

Moreover, using the boundedness of \(m_{\epsilon, \eta}\) \eqref{chap1mregbound} and \(A_\delta\) \eqref{chap1Adeltabound} we may estimate
\[
\begin{split}
\abs{f^N_j(\mathbf{a})} &\leq \sum\limits_{i=0}^\gk \left[ (\frac{1}{\eta} + 1) \norml{e_{i,x}} \norml{e_{j,x}} \cdot \sum_{k=0}^\gk (1 + \delta) \norm{\mathbf{a}}{\infty} \norml{\basisfcn_{k,x}} \norml{e_{j,x}} \right] \\
&\leq C(N, \lambda_N, \eta) \, \norm{\mathbf{a}}{\infty}
\end{split}
\]
for \(0 \leq j \leq N\), such that
\begin{equation} \label{prfdiscboundf}
\norm{\mathbf{f}^\gk (\mathbf{a})}{\infty} 
\leq C(\gk, \lambda_\gk, \eta) \, \norm{\mathbf{a}}{\infty} \fst
\end{equation}
This in particular implies
\[
\sup\limits_{B_r(\mathbf{c}^N_\mathbf{0})} \norm{\mathbf{f}^\gk (\mathbf{a})}{\infty} \leq C(\gk, \lambda_\gk, \eta) \cdot (\norm{\mathbf{c}_\mathbf{0}^\gk}{\infty} + r) =: K_{0} \fst
\]


Now, by Peano's theorem \ref{thmpeano} there exists a continuously differentiable solution \(\mathbf{c}^\gk(t)\) of \eqref{prfdiscsys3} on \([0, T_0]\), where
\[
T_0 := \min \Bigl\lbrace T, \frac{r}{K_{0}} \Bigr\rbrace \fst
\]
In particular, the solution is regular enough for the formal calculations above to be justified, and we may determine \(d_j(t)\), \(j=0,1,\dotsc,N\), from \eqref{prfdiscsys1} to obtain a local solution of system \eqref{chap1discretesys}. Global solvability of \eqref{chap1discretesys} will follow from the a priori estimates that we are going to derive in the following step.


To this end, we multiply \eqref{chap1discretesysa} by \( d^\gk_j(t) \), sum over \(j=0,1,\dotsc,N\) and integrate over \((0, t)\) for arbitrary \(0 < t \leq T_0\). This yields
\begin{equation} \label{prfdiscenergypart1}
\intex{0}{t}{\prodl{\ddt u^\gk}{p^\gk} \, ds} + \intex{0}{t}{\intspace{m_{\epsilon,\eta}(u^\gk) \, \abs{p^\gk_x}^2} \, ds} = 0 \fst
\end{equation}
On the other hand, multiplication of \eqref{chap1discretesysb} by \(\ddt  c^\gk_j(t)\), summation over \(j=0,1,\dotsc,N\) and integration over \((0, t)\) leads to
\begin{equation} \label{prfdiscenergypart2}
\begin{split}
\intex{0}{t}{\prodl{p^\gk}{\ddt  u^\gk} \, ds} &= \intex{0}{t}{\intex{\Omega}{ }{ \Bigl( \Qex{u^\gk_x} + \delta u^\gk_x \Bigr) \cdot \ddt  u^\gk_x \, dx} ds} \\
&= \intex{0}{t}{\intex{\Omega}{ }{ \ddt  \Bigl( \Q{u^\gk_x} + \frac{\delta}{2} \abs{u^\gk_x}^2 \Bigr) \, dx} ds} \fst
\end{split}
\end{equation}
Hence, employing the fundamental theorem of calculus, it follows from \eqref{prfdiscenergypart1} and \eqref{prfdiscenergypart2} that
\begin{equation} \label{prfdiscenergy}
\begin{split}
\intspace{\Q{u^\gk_x(t)}} + \frac{\delta}{2} \norml{u^\gk_x(t)}^2 + \intex{0}{t}{\intspace{m_{\epsilon,\eta}(u^\gk_x) \, \abs{p^\gk_x}^2 }}{ds} &\\
\leq \intspace{\Q{u^\gk_{0,x}}} + \frac{\delta}{2} \norml{u^\gk_{0,x}}^2 &\leq C(\approxu) \cma
\end{split}
\end{equation}
where the right-hand side is bounded due to the definition of \(u^\gk_0\) in \eqref{prfdiscsys1c} and the fact \eqref{chap1boundproj}.

Now, assume that \(T_0 < T\). By \eqref{prfdiscenergy}, it holds for \(0 \leq t \leq T_0\) and \(1 \leq j \leq N\) that
\[
\begin{split}
\abs{c^\gk_j(t)}^2 &\leq \sum_{i=1}^\gk{\abs{c^\gk_i(t)}^2} \leq \frac{1}{\lambda_1} \sum_{i=1}^\gk{\lambda_i \, \abs{c^\gk_i(t)}^2 \intex{\Omega}{}{\abs{ \basisfcn_i}^2}{dx}} \\
& = \frac{1}{\lambda_1} \sum_{i=1}^\gk{\abs{c^\gk_i(t)}^2 \intex{\Omega}{}{\abs{\basisfcn_{i,x}}^2}{dx}} = \frac{1}{\lambda_1}  \intex{\Omega}{}{\sum_{i=1}^\gk {\abs{c^\gk_i(t) \basisfcn_{i,x}}^2}}{dx} \\
& = \frac{1}{\lambda_1} \intex{\Omega}{}{\abs{u^\gk_x(t)}^2}{dx} \leq C(\approxu, \delta) \cma
\end{split}
\]
where we have used the fact that \(0 < \lambda_1 < \lambda_2 < \dotsc\) and the orthogonality of \(\sequence{\basisfcn_j}{j \in \IN_0}\). Since moreover \(\ddt c^\gk_0(t) = 0\) by \eqref{prfdiscsys1a}, we have for \(0 \leq t \leq T_0\)
\[
\abs{c^\gk_0(t)} = \abs{c^\gk_0(0)} = \abs{\prodl{u_0}{e_0}} \leq C(u_0) \fst
\]
Hence, we conclude that
\begin{equation} \label{prfdiscboundc}
\norm{\mathbf{c}^\gk(t)}{\infty} \leq C(\approxu, \delta)
\end{equation}
for \(0 \leq t \leq T_0\). 
From \eqref{prfdiscboundf} and \eqref{prfdiscboundc} we now infer that
\[
\begin{split}
\sup\limits_{B_r(\mathbf{c}^N(t))} \norm{\mathbf{f(\mathbf{a})}}{\infty} &\leq C(N, \lambda_N, \eta) \cdot (\norm{\mathbf{c}^N(t)}{\infty} + r) \\ &\leq C(u_0, N, \lambda_N, \eta, \delta, r) =: K \fst
\end{split}
\]
Thus, applying Peano's theorem \ref{thmpeano} again, we may extend the local solution of \eqref{prfdiscsys3} found above as well as the energy estimate \eqref{prfdiscenergy} onto the interval \([0, T_0 + \frac{r}{K}]\). Note in particular that the constant \(K\) depends on given data and the choice of \(r > 0\) only. Hence, we may repeat this argument to obtain a solution of \eqref{prfdiscsys3} and in turn of system \eqref{chap1discretesys} on all of \([0, T]\) after finitely many steps. This proves the lemma. \qedhere

\end{proof}

We are now ready to prove Proposition \ref{proposition1}.

\begin{proof}[Proof of Proposition \ref{proposition1}]
Let \(N \in \IN\). Multiplying \eqref{chap1discretesysa} and \eqref{chap1discretesysb} by arbitrary constants and summing over \(j, k = 0, \dotsc, N\), we see that the functions \(u^N\) and \(p^N\) obtained by Lemma \ref{lemdisc} satisfy
\begin{subequations} \label{prf1discretesys}
\begin{align}
\prodl{\ddt u^\gk(t)}{\varphi} &+ \prodl{m_{\epsilon,\eta}(u^\gk(t)) \, p_x^\gk(t)}{\varphi_x} = 0 \cma \label{prf1discretesysa} \\
\prodl{p^\gk(t)}{\psi} &= \pairh{A_\delta (u^\gk(t))}{\psi} \cma \label{prf1discretesysb} \\
u^\gk(0) &= \proj_\gk \approxu \label{prf1discretesysc}
\end{align}
\end{subequations}
for arbitrary \(\varphi, \psi \in \discspace_\gk\). In addition,
\begin{equation} \label{prf1energy}
\intex{\Omega}{ }{\Q{u^\gk_x(t)}}{dx} + \frac{\delta}{2} \norml{u^\gk_x(t)}^2 + \intspacetime{ m_{\epsilon,\eta}(u^\gk_x(s)) \, \abs{p^\gk_x(s)}^2 } \leq C(\approxu)
\end{equation}
holds for \(t \in [0,T]\).

\beginstep{2}{A priori estimates.}
We are going to derive some a priori estimates from the energy estimate \eqref{prf1energy}. First of all, we use the constant function \(\varphi \equiv 1 \in \discspace_\gk\) as a test function in \eqref{prf1discretesysa} to obtain
\begin{equation} \label{prf1consmass1}
\intspace{u^\gk(t)} = \intspace{u^\gk_0}
\end{equation}
for arbitrary \(0 \leq t \leq T\). That is, the solution \(u^\gk\) conserves mass. Similarly, testing \eqref{prf1discretesysb} with \(\psi \equiv 1 \in \discspace_\gk\) yields
\begin{equation} \label{prf1consmass2}
\intspace{p^\gk(t)} = 0
\end{equation}
for \(0 \leq t \leq T\). Now, the boundedness of the second term in \eqref{prf1energy} together with \eqref{prf1consmass1} and Poincar\'e's inequality implies
\begin{equation} \label{prf1apriori1}
\norm{u^\gk}{\bochner{L^\infty}{H^1}} \leq C(\approxu, \delta) \fst
\end{equation}
Moreover, from the positivity of \(m_{\epsilon, \eta}\) \eqref{chap1mregpos}, estimate \eqref{prf1energy}, \eqref{prf1consmass2} and Poincar\'e's inequality once more we deduce
\begin{equation} \label{prf1apriori3}
\norm{p^\gk}{\bochner{L^2}{H^1}} \leq C(\approxu, \epsilon) \fst
\end{equation}
Going further, we may integrate by parts in \eqref{prf1discretesysb} to obtain
\begin{equation} \label{prf1identp}
\begin{split}
\prodl{p^\gk(t)}{\psi} &= \pairh{A_\delta (u^\gk(t))}{\psi} \\
&= - \intspace{\leftA \ddx \Qex{u^\gk_x(t)} + \delta u^\gk_{xx}(t) \rightA \cdot \psi}\\
&= - \intspace{\leftA \frac{u^\gk_{xx}(t)}{\Q{u^\gk_x(t)}^3} + \delta u^\gk_{xx}(t) \rightA \cdot \psi}
\end{split}
\end{equation}
for \(\psi \in \discspace_\gk\) and \(t \in [0,T]\). Hence, taking \(\psi = -u^\gk_{xx}(t) \in \discspace_\gk\) as a test function, integrating over \((0,T)\) and employing H\"{o}lder's and Young's inequality leads to
\[
\begin{split}
\intspacetime{\dfrac{\abs{u^\gk_{xx}}^2}{\Q{u^\gk_x}^3} + \delta \abs{u^\gk_{xx}}^2} &= - \inttime{\prodl{p^\gk}{u^\gk_{xx}}} \\
&\leq \frac{\delta}{2} \norm{u^\gk_{xx}}{L^2(\Omega_T)}^2 + C(\delta) \norm{p^\gk}{L^2(\Omega_T)} \fst
\end{split}
\]
Thus,
\begin{equation} \label{prf1apriori5}
\norm{u^\gk}{\bochner{L^2}{H^2}} \leq C(\approxu, \delta, \epsilon) \fst
\end{equation}

To prove compactness in time, we observe that \(u^\gk_t(t) = \ddt u^\gk(t) \in \discspace_\gk \) such that for arbitrary \(\zeta \in H^1(\Omega)\) and \(t \in [0,T]\)
\begin{equation}
\prodl{u^\gk_t(t)}{\zeta} = \prodl{u^\gk_t(t)}{\proj_\gk \zeta} = - \prodl{m_{\epsilon, \eta}(u^\gk(t)) \, p^\gk_x(t)}{(\proj_N \zeta)_x} \label{prf1utp}
\end{equation}
holds by equation \eqref{prf1discretesysa}. Hence, having in mind the boundedness of \(m_{\epsilon, \eta}\) \eqref{chap1mregbound}, the energy estimate \eqref{prf1energy} and the fact
\[
\norm{\proj_\gk}{\mathcal{L}\left( H^1(\Omega); H^1(\Omega) \right)} = 1 \cma
\]
we integrate relation \eqref{prf1utp} over \((0,T)\) and employ H\"{o}lder's inequality to deduce for arbitrary \(v \in \bochner{L^2}{H^1}\)
\[
\begin{split}
\absB{\inttime{\prodl{u^\gk_t}{v}}}
& \leq \intspacetime{\abs{m_{\epsilon,\eta}(u^\gk_x) \, p^\gk_x \cdot (\proj_\gk v)_x}} \\
& \leq \norm{m_{\epsilon,\eta}(u^\gk_x) \, p^\gk_x}{L^2(\Omega_T)} \norm{(\proj_\gk v)_x}{L^2(\Omega_T)} \\
& \leq (\frac{1}{\eta}+1)^\frac{1}{2} \norm{m_{\epsilon,\eta}(u^\gk_x)^\frac{1}{2} \, p^\gk_x}{L^2(\Omega_T)} \norm{(\proj_\gk v)_x}{L^2(\Omega_T)} \\
& \leq C(\approxu, \eta) \norm{v}{\bochner{L^2}{H^1}} \fst
\end{split}
\]
It follows
\[
u^\gk_t \in \bochner{L^2}{H^1}' \cong \timespace
\]
and in particular
\begin{equation} \label{prf1apriori6}
\norm{u^\gk_t}{\timespace} \leq C(\approxu, \eta) \fst
\end{equation}


\beginstep{3}{The limit \(\gk \to \infty\).}
Using the a priori estimates \eqref{prf1apriori1}, \eqref{prf1apriori3}, \eqref{prf1apriori5} and \eqref{prf1apriori6}, we infer the following convergence results for a subsequence, also denoted by \(\sequence{u^\gk}{N \in \IN}\) and \(\sequence{p^\gk}{N \in \IN}\), for \(\gk \to \infty\):
\begin{alignat}{3}
u^\gk &\weakto u &&\weaktext &&\bochner{L^2}{H^2} \label{prf1weakconvu} \\
u^\gk &\weakstarto u &&\weakstartext &&\bochner{L^\infty}{H^1} \\
u^\gk_t &\weakto u_t &&\weaktext &&\timespace \\
p^\gk &\weakto p &&\weaktext &&\bochner{L^2}{H^1} \label{prf1weakconvp}
\end{alignat}
Since \(H^2(\Omega) \cptembedding H^1(\Omega) \embedding H^1(\Omega)'\) and \(H^1(\Omega) \cptembedding L^2(\Omega) \embedding H^1(\Omega)'\), Simon's theorem \ref{thmsimon} implies for a further subsequence that
\begin{alignat}{3}
u^\gk &\strongto u &&\strongtext &&\bochner{L^2}{H^1} \label{prf1strconvu} \cma \\
u^\gk &\strongto u &&\strongtext &&\bochnerex{C}{[0,T]}{L^2(\Omega)} \label{prf1strconvuC} \fst
\end{alignat}
This proves the regularity results \eqref{prop1regu}--\eqref{prop1regp}. Since moreover \(u^\gk_x \in \bochner{L^2}{H^1_0}\) for all \(\gk \in \IN\) by construction,
\eqref{prop1boundary} follows from \eqref{prf1weakconvu}. The initial condition \eqref{prop1initial} is an immediate consequence of \eqref{prf1discretesysc}, \eqref{prf1strconvuC} and the fact that
\begin{alignat*}{3}
\proj_\gk u_0 &\strongto u_0 &&\strongtext &&L^2(\Omega)
\end{alignat*}
as \(N \to \infty\).

Going further, we deduce from the strong convergence of \(u^N\) and \(u^N_x\) in \(L^2(\Omega_T)\) \eqref{prf1strconvu} for yet a further subsequence that (cf. for instance \cite[Lemma 1.18]{alt})
\begin{alignat}{3}
u^\gk &\strongto u &&\pwaetext &&\Omega_T \label{prf1unpwae} \cma \\
u^\gk_x &\strongto u_x &&\pwaetext &&\Omega_T \label{prf1unxpwae} \fst
\end{alignat}
Using this, we will prove below that
\begin{alignat}{3}
m_{\epsilon,\eta}(u^\gk) \, p^\gk_x &\weakto m_{\epsilon,\eta}(u) \, p_x &&\weaktext &&L^2(\Omega_T) \cma \label{prf1weakconvmp} \\
- \ddx \Qex{u^N_x} &\weakto - \ddx \Qex{u_x} &&\weaktext &&L^2(\Omega_T) \label{prf1weakconvA} \fst
\end{alignat}

Now, for arbitrary \(v, w \in \bochner{L^2}{H^1}\) we may choose sequences \(\sequence{v^\gk}{N \in \IN}, \sequence{u^\gk}{N \in \IN} \subset \bochnerex{C^1}{[0,T]}{H^1(\Omega)}\) such that \(v^\gk(t), w^\gk(t) \in \discspace_\gk\) for \(\gk \in \IN\), \(t \in [0,T]\) and
\begin{alignat}{3}
v^\gk, w^\gk &\strongto v, w &&\strongtext &&\bochner{L^2}{H^1} \label{prf1strconvvw}
\end{alignat}
as \(N \to \infty\). Thus, if we integrate equations \eqref{prf1discretesysa} and \eqref{prf1discretesysb} over \((0,T)\) we obtain
\begin{subequations} \label{prf1discretesys2}
\begin{align}
\inttime{\pairh{u^\gk_t}{v^\gk}} &+ \intspacetime{m_{\epsilon,\eta}(u^\gk) \, p^\gk_x \cdot v^\gk_x} = 0 \cma \\
\inttime{\prodl{p^\gk}{w^\gk}} &= - \intspacetime{\ddx \leftA \Qex{u^\gk_x} + \delta u^\gk_x \rightA \cdot w^\gk} \label{prf1discretesys2b} \fst
\end{align}
\end{subequations}
Letting \(\gk \to \infty\) in \eqref{prf1discretesys2} and using the convergence results above, we end up with
\begin{subequations} \label{prf1sys}
\begin{align}
\inttime{\pairh{u_t}{v}} &+ \intspacetime{m_{\epsilon,\eta}(u) \, p_x \cdot v_x} = 0 \cma \\
\inttime{\prodl{p}{w}} &= - \intspacetime{\ddx \leftA \Qex{u_x} + \delta u_x \rightA \cdot w} \fst
\end{align}
\end{subequations}
In particular, by the arbitrariness of \(w \in \bochner{L^2}{H^1}\) we may indeed identify
\[
p = - \ddx \leftA \Qex{u_x} + \delta u_x \rightA
\]
in \(L^2(\Omega_T)\). This proves \eqref{prop1sys}.

It remains to prove \eqref{prf1weakconvmp} and \eqref{prf1weakconvA}. By the continuity of \( m_{\epsilon,\eta} \) and the pointwise convergence of \(u^\gk \) \eqref{prf1unpwae}, \(m_{\epsilon,\eta}(u^\gk)\) converges pointwise almost everywhere in \(\Omega_T\) to \(m_{\epsilon,\eta}(u)\) as \(N \to \infty\). Moreover, the boundedness of \(m_{\epsilon,\eta}\) \eqref{chap1mregbound} and Vitali's convergence theorem yield
\[
m_{\epsilon,\eta}(u^\gk) \, v \strongto m_{\epsilon,\eta}(u) \, v \strongtext L^2(\Omega_T)
\]
for arbitrary \(v \in L^2(\Omega_T)\). Taking the weak convergence of \(p^\gk_x\) \eqref{prf1weakconvp} into account, it follows that
\[
\lim\limits_{\gk \to \infty} \intspacetime{m_{\epsilon, \eta}(u^\gk) \, p^\gk_x \cdot v} = \intspacetime{m_{\epsilon, \eta}(u) \, p_x \cdot v}
\]
for \(v \in L^2(\Omega_T)\).
This proves \eqref{prf1weakconvmp}.

Similarly, having the relation
\[
- \ddx \Qex{u^\gk_x} = - \frac{u^\gk_{xx}}{\Q{u^\gk_x}^3}
\]
in mind, we deduce from the boundedness \(\Q{u^\gk_x}^{-3} \leq 1\), the pointwise convergence of \(u^\gk_x\) \eqref{prf1unxpwae} and once more Vitali's convergence theorem that
\[
\frac{1}{\Q{u^\gk_x}^3} \, w \strongto \frac{1}{\Q{u_x}^3} \, w \strongtext L^2(\Omega_T)
\]
for arbitrary \(w \in L^2(\Omega_T)\) as \(N \to \infty\). As above, it follows from the weak convergence of \(u^\gk_{xx}\) \eqref{prf1weakconvu} that
\[
\lim\limits_{\gk \to \infty} \intspacetime{\frac{u^\gk_{xx}}{\Q{u^\gk_x}^3} \cdot w} = \intspacetime{\frac{u_{xx}}{\Q{u_x}^3} \cdot w} \fst
\]
for \(w \in L^2(\Omega_T)\). This proves \eqref{prf1weakconvA}. For another method of identifying the nonlinear pressure, confer Remark \ref{remarkmon} below.

To establish the energy estimate \eqref{prop1energy}, note that by \eqref{prf1unpwae} and \eqref{prf1unxpwae} the functions \(u^\gk(t, .)\) and \(u^\gk_x(t, .)\) converge pointwise almost everywhere in \(\Omega\) for almost all \(t \in (0,T)\) (cf. for instance \cite[Lemma A4.9]{alt}). Thus, by Fatou's lemma and the discrete energy estimate \eqref{prf1energy} we have
\[
\intspace{\Q{u_x(t)}} \leq \liminf\limits_{\gk \to \infty} \intspace{\Q{u^\gk_x(t)}} \leq C(u_0)
\]
for almost all \(t \in (0,T)\). Furthermore, note that by the same arguments as above we may prove that
\[
m_{\epsilon,\eta}(u^\gk)^\frac{1}{2} \, p^\gk_x \weakto m_{\epsilon,\eta}(u)^\frac{1}{2} \, p_x \weaktext L^2(\Omega_T) \fst
\]
Hence, using weak lower semicontinuity of the norm in \(L^2(\Omega_T)\) we deduce from estimate \eqref{prf1energy} that
\[
\begin{split}
\intspacetime {m_{\epsilon,\eta}(u) \, \abs{p_x}^2} &= \norm{m_{\epsilon,\eta}(u)^\frac{1}{2} \, p_x}{L^2(\Omega_T)} \\
&\leq \liminf\limits_{\gk \to \infty} \, \norm{m_{\epsilon,\eta}(u^\gk)^\frac{1}{2} \, p^\gk_x}{L^2(\Omega_T)} \leq C(u_0) \fst
\end{split}
\]
The boundedness of the second term in \eqref{prop1energy} is straightforward. This proves the energy estimate \eqref{prop1energy}, and the proof of Proposition \ref{proposition1} is complete. \qedhere
\end{proof}


\begin{rem} \label{remarkmon}
As previously mentioned, the nonlinear operator \(A_\delta : H^1(\Omega) \to H^1(\Omega)'\), \(\delta > 0\), can be shown to be monotone and hemicontinuous. For instance, the monotonicity follows easily from the fact that \(A_\delta\) can be obtained as the G\^{a}teaux derivative of the nonlinear convex functional \(F_\delta:H^1(\Omega) \to \IR\) given by
\[
F_\delta(u) = \intspace{\Q{u_x} + \frac{\delta}{2} \abs{u_x}^2} \qquad \forall \, u \in H^1(\Omega)
\]
for \(\delta > 0\) (cf. \cite[Proposition 1.1, p. 158]{lions}). Hence, there is a more elegant method of identifying the nonlinear pressure \(p\) in the limit. To this end, we depart from
\[
\inttime{\prodl{p^\gk}{w^\gk}} = \inttime{\pairh{A_\delta(u^\gk)}{w^\gk}}
\]
instead of \eqref{prf1discretesys2b}. Furthermore, we may employ the energy estimate \eqref{prf1energy} to prove that \(A_\delta(u^\gk)\) is uniformly bounded in \(\timespace\). Thus, we deduce the existence of a convergent subsequence, also denoted by \(\sequence{A_\delta(u^N)}{N \in \IN}\), and may employ the monotonicity of \(A_\delta\) to prove that indeed
\begin{alignat*}{3}
A_\delta(u^\gk) &\weakto A_\delta(u) &&\weaktext &&\timespace \fst
\end{alignat*}
as \(\gk \to \infty\) (cf. for instance \cite{lions,zeidler2,ruz}). Hence, we obtain
\[
\inttime{\prodl{p}{w}} = \inttime{\pairh{A_\delta(u)}{w}}
\]
in the limit. By the regularity of \(u\), we may finally identify
\[
p = - \ddx \leftA \Qex{u_x} + \delta u_x \rightA \fst
\]
Note in particular that this method does not require the gradient \(u^\gk_x\) to converge pointwise almost everywhere in \(\Omega_T\). However, since we will always have pointwise convergence of \(u^\gk_x\) throughout this work, we will continue to use the straightforward method as in the proof of Proposition \ref{proposition1} above. \qedrem



\end{rem}

\begin{rem}
So far, we could also have used the well-known theorem of Aubin--Lions (see \cite[Theorem 5.1, p. 58]{lions}) to prove strong convergence of \(u^\gk\) in \(\bochner{L^2}{H^1}\). In this context, Proposition \ref{propevtrip} would guarantee that \(u \in \bochnerex{C}{[0,T]}{L^2}\) in the limit. However, we will necessarily need the more general theorem of Simon \ref{thmsimon} below, as it does not require any of the involved spaces to be reflexive. \qedrem
\end{rem}

\subsection{Unbounded Mobilities} \label{chapunbounded}

In the previous step, we have proved existence of a weak solution \((u^\eta, p^\eta) = (u^{\delta,\epsilon,\eta}, p^{\delta,\epsilon,\eta})\), \(0 < \delta,\epsilon,\eta \leq 1\), of the following system:
\begin{subequations} \label{chap2syseta}
\begin{align}
\inttime{\pairh{u^\eta_t}{v}} &+ \intspacetime{m_{\epsilon,\eta}(u^\eta) \; p^\eta_x \cdot v_x} = 0 &\forall& \, v \in \bochner{L^2}{H^1} \label{chap2sysetaa} \\
p^\eta &= - \ddx \leftA \Qex{u^\eta_x} + \delta u^\eta_x \rightA \label{chap2sysetab}
\end{align}
\end{subequations}
In addition, the solution satisfies the following energy estimate for almost all \(t \in (0,T)\):
\begin{equation} \label{chap2energyeta}
\begin{split}
\intex{\Omega}{ }{\Q{u^\eta_x(t)}}{dx} + \frac{\delta}{2} \norml{u^\eta_x(t)}^2 + \intspacetime{ m_{\epsilon, \eta}(u^\eta) \, \abs{p_x}^2 } \leq C(\approxu)
\end{split}
\end{equation}

The redundancy of the artificial bound on the mobility \(m_{\epsilon, \eta}\) provided by \(\eta > 0\) is obvious: Estimate \eqref{chap2energyeta} and conservation of mass imply that the solutions \(\sequence{u^\eta}{\eta > 0}\) are uniformly bounded in \(\bochner{L^\infty}{W^{1,1}}\). Since \(W^{1,1}(\Omega) \embedding L^\infty(\Omega)\) holds true in the one-dimensional setting, \(u^\eta\) and \(m_{\epsilon, \eta}(u^\eta)\) are bounded almost everywhere in \(\Omega_T\) independently of \(\eta > 0\). Relying on the energy estimate \eqref{chap2energyeta} above, we will obtain the same regularity results as in Proposition \ref{proposition1} in the limit \(\eta \limeps 0\).

For the upcoming results we define
\[
m_{\epsilon}(s) := \lim\limits_{\eta \limeps 0} m_{\epsilon, \eta}(s) = m(s) + \epsilon \qquad \forall \, s \in \IR \fst
\]

The goal of this section is the proof of the following proposition:

\begin{prop}\label{proposition2}
Assume \assumptions and let \(0 < \delta, \epsilon \leq 1\). Then a pair of functions \((u, p) = (u^{\delta,\epsilon}, p^{\delta,\epsilon})\) exists such that
\begin{align}
u &\in \bochner{L^2}{H^2} \cap \bochner{L^\infty}{H^1} \cap \bochnerex{C}{[0,T]}{L^2(\Omega)} \cma \label{prop2regu} \\
u_t &\in \timespace \cma \label{prop2regut} \\
p &\in \bochner{L^2}{H^1} \cma \label{prop2regp} \\
u_x(t) &\in H^1_0(\Omega) \text{ a.e. in } (0,T) \cma \label{prop2boundary} \\
u(0) &= \approxu \cma \label{prop2initial}
\end{align}
which solves the following system:
\begin{subequations} \label{prop2sys}
\begin{align}
\inttime{\pairh{u_t}{v}} &+ \intspacetime{m_{\epsilon}(u) \; p_x \cdot v_x} = 0 &\forall& \, v \in \bochner{L^2}{H^1} \label{prop2sysa} \\
p &= - \ddx \leftA \Qex{u_x} + \delta u_x \rightA \label{prop2sysb}
\end{align}
\end{subequations}
Furthermore, the solution satisfies the energy estimate
\begin{equation} \label{prop2energy}
\begin{split}
\intex{\Omega}{ }{\Q{u_x(t)}}{dx} + \frac{\delta}{2} \norml{u_x(t)}^2 + \intspacetime{ m_\epsilon(u_x) \, \abs{p_x}^2 } \leq C(\approxu)\\
\end{split}
\end{equation}
for almost all \(t \in (0,T)\). \qedthm
\end{prop}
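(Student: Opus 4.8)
The plan is to pass to the limit $\eta \to 0$ in the family of solutions $(u^\eta, p^\eta)$ obtained from Proposition \ref{proposition1}, exploiting that the energy estimate \eqref{chap2energyeta} is \emph{uniform} in $\eta$ (the bound $C(\approxu)$ does not depend on $\eta$). First I would record the $\eta$-independent a priori bounds: from \eqref{chap2energyeta} together with conservation of mass and Poincar\'e's inequality one gets $\norm{u^\eta}{\bochner{L^\infty}{H^1}} \leq C(\approxu,\delta)$, hence by the embedding $W^{1,1}(\Omega) \embedding L^\infty(\Omega)$ (valid in one space dimension) the functions $u^\eta$ are bounded in $L^\infty(\Omega_T)$ independently of $\eta$; consequently $m_{\epsilon,\eta}(u^\eta)$ is bounded above and below (by $\epsilon$) uniformly in $\eta$ on the relevant range of values, since $m_{\epsilon,\eta}(s) \uparrow m_\epsilon(s)$ and all the $u^\eta$ take values in a fixed bounded set. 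Then, exactly as in the proof of Proposition \ref{proposition1}, testing the identity \eqref{prf1identp}-type relation (integration by parts in \eqref{chap2sysetab} using $-\ddx \Qex{u^\eta_x} = -u^\eta_{xx}/\Q{u^\eta_x}^3$ and testing with $-u^\eta_{xx}$) yields the entropy-flavoured bound $\norm{u^\eta}{\bochner{L^2}{H^2}} \leq C(\approxu,\delta,\epsilon)$, the positivity $m_{\epsilon,\eta} \geq \epsilon$ gives $\norm{p^\eta}{\bochner{L^2}{H^1}} \leq C(\approxu,\epsilon)$, and the bound on $m_{\epsilon,\eta}(u^\eta)\abs{p^\eta_x}^2$ plus $m_{\epsilon,\eta} \leq m_\epsilon \leq \norm{m}{C([-M,M])}+1 =: C(\epsilon)$ (with $M$ the uniform $L^\infty$ bound) gives $\norm{m_{\epsilon,\eta}(u^\eta)p^\eta_x}{L^2(\Omega_T)} \leq C(\approxu,\epsilon)$; combined with the boundedness of $m_{\epsilon,\eta}$ and \eqref{chap2sysetaa}, this bounds $u^\eta_t$ in $\timespace$ uniformly in $\eta$.

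Next I would extract a subsequence and pass to the limit. The uniform bounds give, for a subsequence, $u^\eta \weakto u$ in $\bochner{L^2}{H^2}$, $u^\eta \weakstarto u$ in $\bochner{L^\infty}{H^1}$, $u^\eta_t \weakto u_t$ in $\timespace$, and $p^\eta \weakto p$ in $\bochner{L^2}{H^1}$. Since $H^2(\Omega) \cptembedding H^1(\Omega) \embedding H^1(\Omega)'$ and $H^1(\Omega) \cptembedding L^2(\Omega) \embedding H^1(\Omega)'$, Simon's theorem \ref{thmsimon} yields strong convergence $u^\eta \strongto u$ in $\bochner{L^2}{H^1}$ and in $\bochnerex{C}{[0,T]}{L^2(\Omega)}$, hence (for a further subsequence) $u^\eta \strongto u$ and $u^\eta_x \strongto u_x$ pointwise a.e.\ in $\Omega_T$. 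The regularity claims \eqref{prop2regu}--\eqref{prop2regp}, the boundary condition \eqref{prop2boundary} (from $u^\eta_x \in \bochner{L^2}{H^1_0}$ and weak $H^2$-convergence), and the initial condition \eqref{prop2initial} (since $u^\eta(0) = \approxu$ is fixed in $\eta$) follow as in Proposition \ref{proposition1}.

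The one genuinely new point — and the main obstacle — is identifying the limit of the nonlinear flux $m_{\epsilon,\eta}(u^\eta)\,p^\eta_x$, since now \emph{both} the argument $u^\eta$ and the function $m_{\epsilon,\eta}$ itself vary with $\eta$. The remedy is to write $m_{\epsilon,\eta}(u^\eta) = \bigl(m_{\epsilon,\eta}(u^\eta) - m_\epsilon(u^\eta)\bigr) + m_\epsilon(u^\eta)$ and observe that the difference is controlled: $0 \leq m_\epsilon(s) - m_{\epsilon,\eta}(s) = \dfrac{\eta\,m(s)^2}{1+\eta\,m(s)} \leq \eta\,m(s)^2 \leq \eta\,C(\epsilon)$ for $\abs{s} \leq M$, so $m_{\epsilon,\eta}(u^\eta) \strongto m_\epsilon(u)$ pointwise a.e.\ in $\Omega_T$ (combining this uniform-in-$s$ estimate with $m_\epsilon(u^\eta) \to m_\epsilon(u)$ from continuity and pointwise convergence of $u^\eta$) and, by uniform boundedness and Vitali's convergence theorem, $m_{\epsilon,\eta}(u^\eta)\,v \strongto m_\epsilon(u)\,v$ in $L^2(\Omega_T)$ for every $v \in L^2(\Omega_T)$; pairing this with the weak convergence $p^\eta_x \weakto p_x$ gives
\[
\lim_{\eta \to 0}\intspacetime{m_{\epsilon,\eta}(u^\eta)\,p^\eta_x \cdot v} = \intspacetime{m_\epsilon(u)\,p_x \cdot v}
\qquad \forall\, v \in L^2(\Omega_T),
\]
which, with test functions $v^\eta \strongto v$ in $\bochner{L^2}{H^1}$ adapted to the Galerkin spaces being unnecessary here (the spaces are fixed), lets me pass to the limit in \eqref{chap2sysetaa} and obtain \eqref{prop2sysa}. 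The identification $p = -\ddx(\Qex{u_x} + \delta u_x)$ in \eqref{prop2sysb} is then exactly as before, using $-\ddx\Qex{u^\eta_x} = -u^\eta_{xx}/\Q{u^\eta_x}^3$, the bound $\Q{u^\eta_x}^{-3} \leq 1$, pointwise convergence of $u^\eta_x$, Vitali, and the weak $L^2$-convergence of $u^\eta_{xx}$ and $u^\eta_x$; alternatively one could invoke the monotone-operator argument of Remark \ref{remarkmon}. Finally, the energy estimate \eqref{prop2energy} follows by Fatou's lemma applied to $\intspace{\Q{u^\eta_x(t)}}$ for a.e.\ $t$, weak lower semicontinuity of the $L^2$-norm applied to $m_{\epsilon,\eta}(u^\eta)^{1/2}p^\eta_x \weakto m_\epsilon(u)^{1/2}p_x$ in $L^2(\Omega_T)$ (the latter convergence proved by the same split-and-Vitali argument applied to $m_{\epsilon,\eta}(u^\eta)^{1/2}$, using that $s \mapsto \sqrt{s}$ is uniformly continuous on bounded sets), and weak lower semicontinuity applied to $\norml{u^\eta_x(t)}$, each bounded by the $\eta$-uniform right-hand side $C(\approxu)$ of \eqref{chap2energyeta}. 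This completes the proof of Proposition \ref{proposition2}.
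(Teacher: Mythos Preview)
Your proposal is correct and follows essentially the same approach as the paper's proof: uniform a~priori estimates from the energy inequality (using $W^{1,1}(\Omega)\embedding L^\infty(\Omega)$ to get the $\eta$-independent $L^\infty$ bound on $u^\eta$ and hence on $m_{\epsilon,\eta}(u^\eta)$), Simon's compactness to upgrade to strong convergence in $\bochner{L^2}{H^1}$ and $\bochnerex{C}{[0,T]}{L^2(\Omega)}$, and then the Vitali/weak-convergence pairing to identify the flux and the pressure. Your explicit split $m_{\epsilon,\eta}(u^\eta) = \bigl(m_{\epsilon,\eta}(u^\eta) - m_\epsilon(u^\eta)\bigr) + m_\epsilon(u^\eta)$ with the bound $0\le m_\epsilon(s)-m_{\epsilon,\eta}(s)\le \eta\,m(s)^2$ is a clean way to make precise what the paper compresses into ``pointwise convergence of $m_{\epsilon,\eta}(u^\eta)$ plus boundedness plus Vitali''; the two arguments are the same in substance.
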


\begin{proof}[Proof of Proposition \ref{proposition2}]
Let \((u^\eta, p^\eta)\) be a solution in the sense of Proposition \ref{proposition1}.

\beginstep{1}{A priori estimates.}
We may easily prove that conservation of mass holds by taking \(v \equiv 1\) as a test function in \eqref{chap2sysetaa} and using Proposition \ref{propevtrip}. That is, we have
\begin{equation} \label{prf2consmass1}
\intspace{u^\eta(t)}  = \intspace{u_0}
\end{equation}
for all \(t \in [0,T]\). Moreover, we infer from equation \eqref{chap2sysetab} and the boundary regularity of \(u^\eta\) \eqref{prop1boundary} that
\begin{equation} \label{prf2consmass2}
\intspace{p^\eta(t)} = 0
\end{equation}
for almost all \(t \in (0,T)\).

From the energy estimate \eqref{chap2energyeta}, conservation of mass \eqref{prf2consmass1} and Poincar\'e's inequality we deduce that
\begin{align}
\norm{u^\eta}{\bochner{L^\infty}{W^{1,1}}} &\leq C(\approxu) \cma \label{prf2apriori0} \\ 
\norm{u^\eta}{\bochner{L^\infty}{H^1}} &\leq C(\approxu, \delta) \label{prf2apriori1} \fst
\end{align}
Moreover, using \eqref{chap2energyeta} and \eqref{prf2consmass2} together with Poincar\'e's inequality once again, we obtain
\begin{gather}
\norm{p^\eta}{\bochner{L^2}{H^1}} \leq C(\approxu, \epsilon) \fst \label{prf2apriori3}
\end{gather}
Multiplying \eqref{chap2sysetab} by \(-u^\eta_{xx} \in L^2(\Omega_T)\), calculation as in the proof of Proposition \ref{proposition1} yields
\begin{equation} \label{prf2bounduxx}
\intspacetime{\dfrac{\abs{u^\eta_{xx}}^2}{\Q{u^N_x}^3} + \delta \abs{u^\eta_{xx}}^2 } \leq \frac{\delta}{2} \norm{u^\eta_{xx}}{L^2(\Omega_T)}^2 + C(\delta) \norm{p^\eta}{L^2(\Omega_T)} \fst 
\end{equation}
Thus,
\begin{gather}
\norm{u^\eta}{\bochner{L^2}{H^2}} \leq C(\approxu, \delta, \epsilon) \fst \label{prf2apriori4}
\end{gather}
Finally, note that \(W^{1,1}(\Omega) \hookrightarrow L^\infty(\Omega)\) and moreover \(\bochner{L^\infty}{L^\infty} \subseteq L^\infty(\Omega_T)\). Hence, by \eqref{prf2apriori0},
\begin{align}
\norm{u^\eta}{L^\infty(\Omega_T)} &\leq C(\approxu) \fst \label{prf2boundu}
\end{align}
Using \eqref{prf2boundu}, the energy estimate \eqref{chap2energyeta} and H\"{o}lder's inequality, we may proceed similarly as in the previous proof to obtain
\[
\begin{split}
\absB{\inttime{{\pairh{u^\eta_t}{v}}}} &\leq \intspacetime{\abs{m_{\epsilon, \eta}(u^\eta) \, p^\eta_x \cdot v_x}} \\
&\leq \norm{m_{\epsilon, \eta}(u^\eta) \, p^\eta_x}{L^2(\Omega_T)} \norm{v_x}{L^2(\Omega_T)} \\
&\leq \norm{m_{\epsilon, \eta}(u^\eta)}{L^\infty(\Omega_T)}^{\frac{1}{2}} \norm{m_{\epsilon, \eta}(u^\eta)^\frac{1}{2} \, p^\eta_x}{L^2(\Omega_T)} \norm{v_x}{L^2(\Omega_T)} \\
&\leq C(u_0, m) \norm{v}{\bochner{L^2}{H^1}}
\end{split}
\]
such that
\begin{gather}
\norm{u^\eta_t}{\timespace} \leq C(\approxu) \fst \label{prf2apriori6}
\end{gather}


\beginstep{2}{The limit \(\eta \limeps 0\).}
Using the a priori estimates \eqref{prf2apriori1}, \eqref{prf2apriori3}, \eqref{prf2apriori4}, \eqref{prf2apriori6} and applying Simon's theorem \ref{thmsimon} in the fashion above, we infer the following convergence results for a subsequence, also denoted by \(\sequence{u^\eta}{\eta > 0}\) and \(\sequence{p^\eta}{\eta > 0}\), for \(\eta \limeps 0\):
\begin{alignat}{3}
u^\eta &\weakto u &&\weaktext &&\bochner{L^2}{H^2} \label{prf2weakconvu} \\
u^\eta &\weakstarto u &&\weakstartext &&\bochner{L^\infty}{H^1} \\
u^\eta &\strongto u &&\strongtext &&\bochner{L^2}{H^1} \label{prf2strconv} \\
u^\eta &\strongto u &&\strongtext &&\bochnerex{C}{[0,T]}{L^2(\Omega)} \\
u^\eta_t &\weakto u_t &&\weaktext &&\timespace \\
p^\eta &\weakto p &&\weaktext &&\bochner{L^2}{H^1} \label{prf2weakconvp}
\end{alignat}
This proves the regularity results \eqref{prop2regu}--\eqref{prop2regp}. \eqref{prop2boundary} and \eqref{prop2initial} follow from the very same arguments as in the proof of Proposition \ref{proposition1}.

As before, the strong convergence of \(u^\eta\) and \(u^\eta_x\) in \(L^2(\Omega_T)\) \eqref{prf2strconv} implies for a further subsequence that
\begin{alignat}{3}
u^\eta &\strongto u &&\pwaetext &&\Omega_T \cma \label{prf2convupwae} \\
u^\eta_x &\strongto u_x &&\pwaetext &&\Omega_T \fst \label{prf2convuxpwae}
\end{alignat}
Hence, we may argue as in the proof of Proposition \ref{proposition1} and deduce from \eqref{prf2weakconvp}, \eqref{prf2convupwae}, the boundedness of \(m_{\epsilon,\eta}(u^\eta)\) \eqref{prf2boundu} and Vitali's convergence theorem that
\begin{alignat*}{3}
m_{\epsilon, \eta}(u^\eta) \, p^\eta_x &\weakto m_\epsilon(u) \, p_x &&\weaktext &&L^2(\Omega_T) \fst
\end{alignat*}

Similarly, by the arguments in the proof of Proposition \ref{proposition1}, \eqref{prf2weakconvu} and \eqref{prf2convuxpwae} yield
\begin{alignat*}{3}
- \ddx \Qex{u^\eta_x} &\weakto - \ddx \Qex{u_x} &&\weaktext &&L^2(\Omega_T) \fst
\end{alignat*}

Letting \(\eta \limeps 0\) in \eqref{chap2syseta} and using the convergence results above, equations \eqref{prop2sys} hold.

To prove the energy estimate \eqref{prop2energy}, we argue as in the proof of Proposition \ref{proposition1}, using Fatou's lemma and weak lower semicontinuity of the norm. \qedhere
\end{proof}

\begin{rem} \label{remarkreg}
In \eqref{prf2bounduxx}, we employed a straightforward calculation to infer an uniform \(\bochner{L^2}{H^2}\)-bound on \(\sequence{u^\eta}{\eta > 0}\), eventually relying on the uniform \(\bochner{L^2}{H^1}\)-bound on \(\sequence{p^\eta}{\eta > 0}\) provided by the energy estimate as long as \(\epsilon > 0\). As an alternative argument, which would work in arbitrary space dimensions also, one observes that \(u^\eta\) solves
\[
A_\delta(u^\eta(t)) = p^\eta(t) \text{ in } H^1(\Omega)'
\]
for almost all \(t \in (0,T)\) and may henceforth apply regularity theory for second-order nonlinear elliptic equations (cf. for instance \cite{giusti}) to deduce the estimate
\[
\norm{u^\eta(t)}{H^2(\Omega)} \leq C(\delta) \norml{p^\eta(t)}
\]
for almost all \(t \in (0,T)\). This implies \eqref{prf2apriori4}. Of course, either approach is heavily depending on the condition \(\delta > 0\). However, this observation will become redundant in the next step when we let \(\epsilon \to 0\) and an additional estimate has to be derived in order to control \(p\) and spatial derivatives of \(u\). Finally, let us mention that in the case of the standard thin-film equation, where the pressure \(p^\eta = - \triangle u^\eta\) is linear, the very same regularity argument would yield an estimate in the space \(\bochner{L^2}{H^3}\), whereas an uniform \(\bochner{L^2}{H^2}\)-bound is the best one can expect in our setting due to the nonlinearity.
\qedrem
\end{rem}

\clearsection

\section{Degenerate Mobilities} \label{secdegenerate}

By Proposition \ref{proposition2}, there exists a solution \((u^\epsilon, p^\epsilon) = (u^{\delta,\epsilon}, p^{\delta,\epsilon})\), \(0 < \delta,\epsilon \leq 1\), of the following system:
\begin{subequations} \label{chap3sysepsilon}
\begin{align}
\inttime{\pairh{u^\epsilon_t}{v}} &+ \intspacetime{m_\epsilon(u^\epsilon) \, p^\epsilon_x \cdot v_x} = 0 &\forall& \, v \in \bochner{L^2}{H^1} \label{chap3sysepsilona} \\ 
p^\epsilon &= - \ddx \leftA \Qex{u^\epsilon_x} + \delta u^\epsilon_x \rightA \label{chap3sysepsilonb}
\end{align}
\end{subequations}
Furthermore, the solution satisfies the following energy estimate for almost all \(t \in (0,T)\):
\begin{equation} \label{chap3energyepsilon}
\begin{split}
\intex{\Omega}{ }{\Q{u^\epsilon_x(t)}}{dx} + \frac{\delta}{2} \norml{u^\epsilon_x(t)}^2 + \intspacetime{ m_{\epsilon}(u^\epsilon) \, \abs{p^\epsilon_x}^2 } \leq C(\approxu)
\end{split}
\end{equation}

We intend to let \(\epsilon \limeps 0 \) in the next step. Note that we would lose control of \(\sequence{p^\epsilon_x}{\epsilon > 0}\) and \(\sequence{u^\epsilon_{xx}}{\epsilon > 0}\), if we would rely on the energy estimate \eqref{chap3energyepsilon} only. As a remedy, we will prove that the solutions \(\sequence{u^\epsilon}{\epsilon > 0}\) obtained by Proposition \ref{proposition2} are H\"{o}lder-continuous on \(\overline{\Omega}_T\). Hence, \(u^\epsilon\) will converge uniformly on \(\overline{\Omega}_T\) to a continuous function \(u\) as \(\epsilon \limeps 0\). Using this result, we may show that indeed \(p_x \in L^2_{loc}(\posset{\abs{u}}_T)\) in the limit. Moreover, we will derive a new a priori estimate, the so-called entropy estimate. It will help controlling second-order derivatives of \(\sequence{u^\epsilon}{\epsilon > 0}\) and henceforth identifying the pressure \(p\) in the limit. In addition, it is the key to prove nonnegativity results for the solution \(u\).

\subsection{Continuous Solutions}

By the energy estimate \eqref{chap3energyepsilon}, we have for any solution \(u^\epsilon\) in the sense of Proposition \ref{proposition2} that
\[
\norml{u^\epsilon_x(t)} \leq C(\approxu, \delta)
\]
for almost all \(t \in (0,T)\). In addition, we may take \(v \equiv 1\) as a test function in \eqref{chap3sysepsilona} and use Proposition \ref{propevtrip} to show that conservation of mass holds.
Hence, by Poincar\'e's inequality we obtain
\begin{equation} \label{chap3boundux2}
\norm{u^\epsilon(t)}{H^1(\Omega)} \leq C(\approxu, \delta)
\end{equation}
for almost all \(t \in (0,T)\). Since moreover \(u^\epsilon \in \bochnerex{C}{[0,T]}{L^2(\Omega)}\) by \eqref{prop2regu}, we may easily deduce that \(u^\epsilon(t) \in H^1(\Omega)\) is uniquely determined for every \(t \in [0,T]\) and that \eqref{chap3boundux2} is in fact valid for all \(t \in [0,T]\). Finally, it follows from the continuous embedding \(C^{0,\frac{1}{2}}(\overline{\Omega}) \hookrightarrow H^1(\Omega)\) that
\begin{equation} \label{chap3boundux3}
\sup\limits_{t \in [0,T]} \norm{u^\epsilon(t)}{C^{0,\frac{1}{2}}(\overline{\Omega})} \leq C(\approxu, \delta) =: K \fst
\end{equation}
Note, that the constant is independent of \(\epsilon > 0\), but may depend on \(\delta > 0\).

The following lemma was introduced by Bernis and Friedman (see \cite[Lemma 2.1]{berfri}). We may basically adopt their proof for our case, since it does not involve the structure of the pressure \(p^\epsilon\) at all. However, we will differ slightly in one detail, making use of Proposition \ref{propevtrip} once more.

\begin{lem} \label{lemcont}
Let \(0 < \delta, \epsilon \leq 1\) and \(u^\epsilon = u^{\delta,\epsilon}\) be a solution in the sense of Proposition \ref{proposition2}. Then, there is a constant \(M(\delta)\) independent of \(\epsilon > 0\) such that
\begin{equation}
\abs{u^\epsilon(t_2, x) - u^\epsilon(t_1, x)} \leq M(\delta) \, \abs{t_2-t_1}^{\frac{1}{8}}
\end{equation}
for all \(x \in \overline{\Omega}\) and \(t_1, t_2 \in [0, T]\). \qedthm
\end{lem}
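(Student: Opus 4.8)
The plan is to establish the Hölder-in-time estimate by a duality argument based on the weak formulation \eqref{chap3sysepsilona}, combined with the uniform spatial Hölder bound \eqref{chap3boundux3}. The key observation is that for fixed $x \in \overline{\Omega}$ and $t_1 < t_2$ in $[0,T]$, the difference $u^\epsilon(t_2,x) - u^\epsilon(t_1,x)$ can be controlled by testing the equation with a suitable localized cut-off function. Specifically, I would introduce a test function of the form $v(s,y) = \zeta(s)\,\varphi(y)$, where $\zeta$ is (an approximation of) the characteristic function of $(t_1,t_2)$ and $\varphi \in H^1(\Omega)$ is a nonnegative "bump" concentrated near $x$ with $\int_\Omega \varphi = 1$ and $\|\varphi_x\|_{L^2(\Omega)}$ controlled by the width $h$ of its support, say $\|\varphi_x\|_{L^2(\Omega)} \leq C h^{-3/2}$.

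First I would use the continuity of $u^\epsilon$ in time with values in $L^2(\Omega)$ (from \eqref{prop2regu}) together with Proposition \ref{propevtrip} to write, for this test function,
\[
\intspace{\bigl( u^\epsilon(t_2) - u^\epsilon(t_1) \bigr) \varphi} = - \inttimeex{t_1}{t_2}{\intspace{m_\epsilon(u^\epsilon) \, p^\epsilon_x \cdot \varphi_x}}.
\]
The right-hand side is estimated by Hölder's inequality: $\bigl| \int_{t_1}^{t_2}\int_\Omega m_\epsilon(u^\epsilon) p^\epsilon_x \varphi_x \bigr| \leq \|m_\epsilon(u^\epsilon)^{1/2}\|_{L^\infty(\Omega_T)} \, \bigl( \int_{t_1}^{t_2}\int_\Omega m_\epsilon(u^\epsilon)|p^\epsilon_x|^2 \bigr)^{1/2} \|\varphi_x\|_{L^2(\Omega)} \, |t_2-t_1|^{1/2}$. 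Here the $L^\infty$-bound on $m_\epsilon(u^\epsilon)$ follows from \eqref{prf2boundu} (uniform in $\epsilon$), the space-time integral is bounded by $C(\approxu)$ via the energy estimate \eqref{chap3energyepsilon}, so the whole right-hand side is bounded by $C(\approxu)\, h^{-3/2}\, |t_2-t_1|^{1/2}$.

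Next I would exploit the left-hand side: since $\int_\Omega \varphi = 1$ and $\varphi$ is supported in an interval of width $h$ around $x$, the spatial Hölder continuity \eqref{chap3boundux3} gives $\bigl| \int_\Omega (u^\epsilon(t_i) - u^\epsilon(t_i,x))\varphi \bigr| \leq K h^{1/2}$ for $i=1,2$, hence $u^\epsilon(t_2,x) - u^\epsilon(t_1,x) = \int_\Omega (u^\epsilon(t_2)-u^\epsilon(t_1))\varphi + (\text{error of size } \lesssim K h^{1/2})$. Combining the two estimates yields
\[
\bigl| u^\epsilon(t_2,x) - u^\epsilon(t_1,x) \bigr| \leq C(\approxu)\, h^{-3/2}\, |t_2-t_1|^{1/2} + 2K\, h^{1/2}.
\]
Optimizing over $h$ — balancing the two terms by choosing $h \sim |t_2-t_1|^{1/4}$ — produces the exponent $\tfrac{1}{8}$, with a constant $M(\delta)$ depending on $K = C(\approxu,\delta)$ and the energy bound (and thus on $\delta$ but not on $\epsilon$). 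For $|t_2-t_1|$ larger than some fixed threshold one simply uses the uniform $L^\infty$-bound \eqref{prf2boundu} directly, so the estimate holds on all of $[0,T]$.

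The main obstacle is the construction and careful accounting of the cut-off function $\varphi$ near the boundary $\partial\Omega$: one must ensure $\varphi \in H^1(\Omega)$ with the stated scaling $\|\varphi_x\|_{L^2(\Omega)} \lesssim h^{-3/2}$, $\int_\Omega\varphi = 1$, and support of width $\sim h$ even when $x$ is close to $-l$ or $l$ — a one-sided bump suffices there. A secondary technical point is justifying the identity relating $\int_\Omega(u^\epsilon(t_2)-u^\epsilon(t_1))\varphi$ to the time integral of the weak formulation; this is where Proposition \ref{propevtrip} enters, allowing the evaluation of $u^\epsilon$ at the specific times $t_1, t_2$ and an integration by parts in time against $\zeta_s$, after passing to the limit in a smooth approximation $\zeta$ of $\chi_{(t_1,t_2)}$. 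Everything else is routine application of Hölder's inequality and the already-established uniform bounds.
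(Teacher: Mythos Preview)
Your proof is correct and follows essentially the same approach as the paper's, which also tests \eqref{chap3sysepsilona} with a bump function $\xi$ localized near $x$ times the indicator $\chi_{(t_1,t_2)}$, bounds the flux term via the energy estimate and the spatial term via \eqref{chap3boundux3}, and balances the scales to obtain the exponent $\tfrac18$. The only cosmetic difference is that the paper phrases the argument by contradiction with the scale of the bump already fixed in terms of $M$ and $K$, whereas you carry a free parameter $h$ and optimize it at the end; the underlying estimates and scaling are identical.
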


\begin{proof}
We argue by contradiction and suppose that for all \(M > 0\) there exist \(x_0 \in \overline{\Omega}\) and \(t_1, t_2 \in [0,T]\) such that
\begin{equation}
\abs{u^\epsilon(t_2, x_0) - u^\epsilon(t_1, x_0)} > M \, \abs{t_2-t_1}^{\frac{1}{8}} \fst \label{prfcont1}
\end{equation}
Without loss of generality we may assume \(u^\epsilon(t_2, x_0) \geq u^\epsilon(t_1, x_0)\) such that \eqref{prfcont1} is equivalent to
\begin{equation}
u^\epsilon(t_2, x_0) - u^\epsilon(t_1, x_0) > M \, \abs{t_2-t_1}^\beta \cma \label{prfcont2}
\end{equation}
where \(x_0 \in \overline{\Omega}\), \(t_1, t_2 \in [0,T]\) and \(\beta := \frac{1}{8}\).
We now choose \(\varphi(t, x) = \theta(t) \, \xi(x)\) as a test function in \eqref{chap3sysepsilona}, where the functions \(\xi\) and \(\theta\) will be defined in the following.

We set
\[
\xi(x) := \xi_0\left( \dfrac{x-x_0}{\frac{M^2}{16K^2}\abs{t_2-t_1}^{2\beta}} \right) \cma
\]
where \(M\) is the constant from \eqref{prfcont2}, \(K\) the H\"{o}lder-constant from \eqref{chap3boundux3} and \(\xi_0\) is a smooth function satisfying \(\xi_0(x) = \xi_0(-x)\), \(\xi_0(x) = 1\) for \(0 \leq x < \frac{1}{2}\), \(\xi_0(x) = 0\) for \(x \geq 1\) and \(\xi'_0(x) \leq 0\) for \(x \geq 0\). In particular, we have
\[
\xi(x) =
\begin{cases}
0 & \text{ if \(\abs{x-x_0} \geq \frac{M^2}{16K^2}\abs{t_2-t_1}^{2\beta}\)} \cma \\
1 & \text{ if \(\abs{x-x_0} \leq \frac{M^2}{2 \cdot 16K^2}\abs{t_2-t_1}^{2\beta}\)} \fst
\end{cases} 
\]
Furthermore, we may assume that \(t_1 < t_2\) (otherwise replace \(\theta\) by \(-\theta\) below) and set
\[
\theta(t) := \chi_{(t_1, t_2)}(t) \fst
\]
Obviously it holds that \(\varphi(t, x) = \theta(t) \, \xi(x) \in \bochner{L^2}{H^1}\). Using \(\varphi\) as a test function in \eqref{chap3sysepsilona} leads to
\begin{equation}
\inttime{\pairh{u^\epsilon_t}{\xi(.)}\theta(t)} = - \intspacetime{m_\epsilon(u^\epsilon) \, p^\epsilon_x \cdot \xi'(x) \, \theta(t)} \fst \label{prfcont4}
\end{equation}
For the left-hand side, we observe that \(1 \cdot \xi \in \bochnerex{C^1}{[0,T]}{H^1(\Omega)}\), and by the rule of partial integration as in Proposition \ref{propevtrip} (iv) we obtain
\begin{equation}
\begin{split}
\inttime{\pairh{u^\epsilon_t}{\xi(.)}\theta(t)} &= \int\limits_{t_1}^{t_2}{\pairh{u^\epsilon_t}{\xi(.)}} \, dt \\
&= \intspace{\bigl( u^\epsilon(t_2, x) -  u^\epsilon(t_1, x) \bigr) \cdot \xi(x)} \fst \label{prfcont5}
\end{split}
\end{equation}
We estimate the right-hand side of \eqref{prfcont5} from below: Since \(\xi(x) = 0\) for \(\abs{x-x_0} \geq \frac{M^2}{16K^2}\abs{t_2-t_1}^{2\beta}\), we only need to consider \(x \in \overline{\Omega}\) with \(\abs{x-x_0} \leq \frac{M^2}{16K^2}\abs{t_2-t_1}^{2\beta}\). For such \(x\) it holds by \eqref{chap3boundux3} and \eqref{prfcont2} that
\[
\begin{split}
u^\epsilon(t_2, x) - u^\epsilon(t_1, x) & = \bigl( u^\epsilon(t_2, x) - u^\epsilon(t_2, x_0) \bigr) \\
& \quad + \bigl( u^\epsilon(t_2, x_0) - u^\epsilon(t_1, x_0) \bigr) \\
& \quad + \bigl( u^\epsilon(t_1, x_0) - u^\epsilon(t_1, x) \bigr) \\
& \geq -2K \abs{x-x_0}^{\frac{1}{2}} + M \abs{t_2-t_1}^\beta \\
& \geq \dfrac{M}{2} \abs{t_2-t_1}^\beta \fst
\end{split}
\]
Moreover, we set \( E := \left\{x \in \Omega : \xi(x) = 1 \right\}\) and observe that \(\mu(E) \geq \frac{M^2}{2 \cdot 16K^2}\abs{t_2-t_1}^{2\beta}\) independently of \(x_0 \in \overline{\Omega}\). Thus,
\begin{equation}
\begin{split}
\intspace{\bigl( u^\epsilon(t_2, x) -  u^\epsilon(t_1, x) \bigr) \cdot \xi(x) } & \geq \int\limits_E {\bigl( u^\epsilon(t_2, x) -  u^\epsilon(t_1, x) \bigr) \cdot \xi(x) \, dx} \\
& \geq \int\limits_E {\frac{M}{2} \abs{t_2-t_1}^\beta \cdot \xi(x) \, dx} \\
& \geq \frac{M}{2} \abs{t_2-t_1}^\beta \cdot \frac{M^2}{2 \cdot 16K^2}\abs{t_2-t_1}^{2\beta} \\
& = \frac{M^3}{4 \cdot 16K^2}\abs{t_2-t_1}^{3\beta} \fst
\end{split} \label{prfcont7}
\end{equation}
On the other hand, using H\"{o}lder's inequality, Fubini's theorem and the energy estimate \eqref{chap3energyepsilon}, we estimate for the right-hand side of \eqref{prfcont4}
{\allowdisplaybreaks[3]
\begin{align*}
& \Bigl\vert \intspacetime{m_\epsilon(u^\epsilon) \, p^\epsilon_x \cdot \xi_x(x) \, \theta(t)} \Bigr\vert \\
& \leq \Bigl( \intspacetime{\abs{ m_\epsilon(u^\epsilon) \, p^\epsilon_x }^2} \Bigr)^\frac{1}{2} \Bigl( \intspacetime{\abs{ \xi'(x)
\theta(t) }^2} \Bigr)^\frac{1}{2} \\
& = \Bigl( \intspacetime{\abs{ m_\epsilon(u^\epsilon) \, p^\epsilon_x }^2} \Bigr)^\frac{1}{2} \Bigl( \intspace{\abs{ \xi'(x) }^2} \Bigr)^\frac{1}{2} \Bigl( \inttime{\abs{ \theta(t) }^2} \Bigr)^\frac{1}{2} \\
& \leq C(u_0) \cdot \sup_{x \in \Omega}{\abs{ \xi'(x) }} \cdot \mu \left( \supp (\xi) \right)^\frac{1}{2} \cdot \abs{t_2-t_1}^\frac{1}{2} \\
& \leq C(u_0) \cdot \dfrac{C(\xi_0)}{\frac{M^2}{16K^2}\abs{t_2-t_1}^{2\beta}} \cdot \dfrac{\sqrt{2}M}{4K}\abs{t_2-t_1}^\beta \cdot \abs{t_2-t_1}^\frac{1}{2} \\
& \leq C_1(u_0, \xi_0, K) \cdot \dfrac{1}{M} \abs{t_2-t_1}^{\frac{1}{2}-\beta} \fst
\end{align*}
}
Thus, together with \eqref{prfcont4} and \eqref{prfcont7} we obtain
\[
M^3 \abs{t_2-t_1}^{3\beta} \leq C_2(u_0, \xi_0, K) \dfrac{1}{M} \abs{t_2-t_1}^{\frac{1}{2}-\beta} \fst
\]
But since \(\beta = \frac{1}{8}\), this is equivalent to
\[
M^4 \leq C_2(u_0, \xi_0, K) \abs{t_2-t_1}^{\frac{1}{2}-4\beta} = C_2(u_0, \xi_0, K) \fst
\]
Hence, \(M\) is bounded by a constant independent of \(x_0, t_1\) and \(t_2\). This contradicts the assumption and the lemma is proved.
\end{proof}

\begin{cor} \label{corcont}
Let \(0 < \delta, \epsilon \leq 1\) and \(u^\epsilon = u^{\delta, \epsilon}\) be a solution in the sense of Proposition \ref{proposition2}. Then \(u^\epsilon\) is continuous on \(\overline{\Omega}_T\) and satisfies
\begin{align}
\abs{u^\epsilon(t_0,x_0)} &\leq C_0(u_0) \cma \label{prfcont2holder0} \\
\abs{u^\epsilon(t_2, x_2) - u^\epsilon(t_1, x_1)} &\leq C_1(u_0, \delta) \, \bigl( \abs{t_2 - t_1}^\frac{1}{8} +  \abs{x_2 - x_1}^\frac{1}{2} \bigr) \label{prfcont2holder1}
\end{align}
for all \((t_i, x_i) \in \overline{\Omega}_T\), \(i = 0, 1, 2\). \qedthm
\end{cor}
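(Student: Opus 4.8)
The idea is to assemble the claimed joint modulus of continuity from two ingredients already in hand: the spatial H\"older bound \eqref{chap3boundux3} (exponent $\tfrac12$, uniform in $t$ and in $\epsilon$) and the temporal H\"older bound of Lemma \ref{lemcont} (exponent $\tfrac18$, uniform in $x$ and in $\epsilon$). First I would fix the pointwise representative of $u^\epsilon$: by the discussion preceding Lemma \ref{lemcont} together with $u^\epsilon \in \bochnerex{C}{[0,T]}{L^2(\Omega)}$ from \eqref{prop2regu}, the slice $u^\epsilon(t)$ is a well-defined element of $H^1(\Omega)$ for \emph{every} $t \in [0,T]$, and \eqref{chap3boundux3} gives $\sup_{t \in [0,T]}\norm{u^\epsilon(t)}{C^{0,1/2}(\overline{\Omega})} \leq K = C(\approxu,\delta)$; hence $u^\epsilon(t,x)$ has an unambiguous value at every $(t,x) \in \overline{\Omega}_T$, and this is the representative to which Lemma \ref{lemcont} applies.

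Next I would establish \eqref{prfcont2holder0}. Note that \eqref{chap3boundux3} only yields a $\delta$-dependent sup-norm bound, so a sharper argument is needed to obtain a constant depending on $u_0$ alone. Conservation of mass together with the energy estimate \eqref{chap3energyepsilon} bounds $\intspace{\Q{u^\epsilon_x(t)}} \geq \norm{u^\epsilon_x(t)}{L^1(\Omega)}$ uniformly, so $u^\epsilon(t)$ is uniformly bounded in $W^{1,1}(\Omega)$ by a constant depending only on $u_0$ (exactly as in \eqref{prf2apriori0}); the one-dimensional embedding $W^{1,1}(\Omega) \embedding L^\infty(\Omega)$ then gives $\norm{u^\epsilon(t)}{L^\infty(\Omega)} \leq C_0(u_0)$ for almost every $t$, and since the temporal modulus furnished by Lemma \ref{lemcont} is uniform in $x$, this bound propagates to every $t \in [0,T]$, which is \eqref{prfcont2holder0}. (Alternatively, \eqref{prfcont2holder0} may be inherited directly from \eqref{prf2boundu} in the passage $\eta \to 0$.)

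Finally, \eqref{prfcont2holder1} follows by inserting the intermediate point $u^\epsilon(t_2,x_1)$ and using the triangle inequality:
\[
\abs{u^\epsilon(t_2,x_2) - u^\epsilon(t_1,x_1)} \leq \abs{u^\epsilon(t_2,x_2) - u^\epsilon(t_2,x_1)} + \abs{u^\epsilon(t_2,x_1) - u^\epsilon(t_1,x_1)} \leq K\,\abs{x_2-x_1}^{1/2} + M(\delta)\,\abs{t_2-t_1}^{1/8},
\]
bounding the first term by \eqref{chap3boundux3} and the second by Lemma \ref{lemcont}, so one may take $C_1(u_0,\delta) := \max\{K, M(\delta)\}$. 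The estimate \eqref{prfcont2holder1} then exhibits $u^\epsilon$ as uniformly continuous on the compact set $\overline{\Omega}_T$, which is the remaining assertion. Since the whole argument merely merges two previously proved estimates, there is no genuine difficulty; the only points requiring a little care are the choice of the pointwise representative of $u^\epsilon$ and the propagation of the a.e.-in-$t$ bound $\norm{u^\epsilon(t)}{L^\infty(\Omega)} \leq C_0(u_0)$ to all $t$.
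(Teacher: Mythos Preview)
Your proposal is correct and follows essentially the same approach as the paper: combine the spatial H\"older bound \eqref{chap3boundux3} with the temporal H\"older bound of Lemma \ref{lemcont} via the triangle inequality for \eqref{prfcont2holder1}, and use conservation of mass together with the energy estimate and the one-dimensional embedding $W^{1,1}(\Omega) \embedding L^\infty(\Omega)$ (exactly as in \eqref{prf2boundu}) for \eqref{prfcont2holder0}. The paper's proof is more terse, but your additional care about the pointwise representative and the propagation of the a.e.-in-$t$ bound to all $t$ is just spelling out what the paper leaves implicit.
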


\begin{proof}
The continuity and the H\"{o}lder-condition \eqref{prfcont2holder1} follow from \eqref{chap3boundux3} and Lemma \ref{lemcont}. As in \eqref{prf2boundu}, we deduce from the energy estimate \eqref{chap3energyepsilon} and conservation of mass that
\[
\norm{u^\epsilon}{L^\infty(\Omega_T)} \leq C(u_0)
\]
holds independently of \(\delta > 0\) and \(\epsilon > 0\), and \eqref{prfcont2holder0} follows.
\end{proof}

\subsection{The Entropy Estimate} \label{chapentropy}

For \(0 < \epsilon \leq 1\) we consider the functions
\begin{align}
g_\epsilon(s) & := -\intex{s}{a}{\dfrac{1}{m_{\epsilon}(r)}}{dr} \cma \label{chap3defg} \\
G_\epsilon(s) & := -\intex{s}{a}{g_\epsilon(r)}{dr} \cma \label{chap3defG}
\end{align}
where we choose
\begin{equation} \label{chap3defa}
a > \max\limits_{(t,x) \in \overline{\Omega}_T} \abs{u^\epsilon(t,x)} \fst
\end{equation}
Note in particular that we may choose \(a\) independently of \(\epsilon > 0\) due to Corollary \ref{corcont}. Obviously it holds that
\begin{equation}
G_\epsilon(s) \geq 0, \qquad G_\epsilon'(s) = g_\epsilon(s) \leq 0 \qquad \forall \, s \leq a \label{chap3gmon}
\end{equation}
and we may estimate
\[
g_\epsilon(s) \leq \dfrac{1}{\epsilon} \, \abs{s - a}, \quad G_\epsilon(s) \leq \dfrac{1}{\epsilon} \, \abs{s - a}^2 \fst
\]
Furthermore, we have for \(0 < \epsilon_1 < \epsilon_2\)
\begin{equation} \label{chap3Gmon}
G_{\epsilon_2} (t) \leq G_{\epsilon_1} (t) \leq G(t) := \lim_{\epsilon \to 0} G_{\epsilon}(t) \fst
\end{equation}

Since \(g_\epsilon \in C^1(\IR)\) and \(g'_\epsilon = \dfrac{1}{m_\epsilon(.)} \in L^\infty(\IR)\) for \(\epsilon > 0\), we may apply the chain rule for weak derivatives (cf. for instance \cite[Lemma 7.5]{giltru}) to obtain
\[
\bigl(g(u^\epsilon)\bigr)_x = g'_\epsilon(u^\epsilon) \, u^\epsilon_x = \dfrac{1}{m_\epsilon(u^\epsilon)} u^\epsilon_x
\]
and deduce that \(g_\epsilon(u^\epsilon) \in \bochner{L^2}{H^1}\) holds by the regularity of \(u^\epsilon\). Using \(g_\epsilon(u^\epsilon) \cdot \chi_{(t_1, t_2)}\) for \(0\leq t_1, t_2 \leq T\) as a test function in \eqref{chap3sysepsilona} leads to
\begin{equation} \label{chap3entropyepsilon1}
\begin{split}
0 &= \inttimeex{t_1}{t_2}{\pairh{u^\epsilon_t}{g_\epsilon(u^\epsilon)}} + \intspacetimeex{(t_1, t_2) \times \Omega}{m_\epsilon(u^\epsilon) \, p^\epsilon_x \cdot \left(g_\epsilon(u^\epsilon)\right)_x} \\ 
&= \inttimeex{t_1}{t_2}{\pairh{u^\epsilon_t}{g_\epsilon(u^\epsilon)}} + \intspacetimeex{(t_1, t_2) \times \Omega}{ p^\epsilon_x \cdot u^\epsilon_x} \fst
\end{split}
\end{equation}
On the other hand, we may multiply \eqref{chap3sysepsilonb} by \(-u^\epsilon_{xx} \cdot \chi_{(t_1, t_2)} \in L^2(\Omega_T)\) and integrate by parts to obtain
\begin{equation} \label{chap3entropyepsilon2}
\begin{split}
\intspacetimeex{(t_1, t_2) \times \Omega}{ p^\epsilon_x \cdot u^\epsilon_x} &= -\inttimeex{t_1}{t_2}{\prodl{p^\epsilon}{u^\epsilon_{xx}}} \\
&= \intspacetimeex{(t_1, t_2) \times \Omega}{\dfrac{\abs{u^\epsilon_{xx}}^2}{\Q{u^\epsilon_x}^3} + \delta \abs{u^\epsilon_{xx}}^2}
\end{split}
\end{equation}
Note that the boundary terms vanish due to the regularity of \(u^\epsilon\) provided by \eqref{prop2boundary}.
Putting together \eqref{chap3entropyepsilon1} and \eqref{chap3entropyepsilon2} we arrive at
\begin{equation} \label{chap3entropyepsilon3}
\inttime{\pairh{u^\epsilon_t}{g_\epsilon(u^\epsilon)}} + \intspacetimeex{(t_1, t_2) \times \Omega}{\dfrac{\abs{u^\epsilon_{xx}}^2}{\sqrt{1+ \abs{u^\epsilon_x}^2}^3} + \delta \abs{u^\epsilon_{xx}}^2} = 0
\end{equation}
for \(0\leq t_1, t_2 \leq T\). We prove the following lemma:

\begin{lem} \label{lemchain}
Let \(G \in C^2(\IR)\) and denote \(g := G'\). Moreover, suppose that \(\abs{g'} \leq c_0\). Then, for \(u \in \evspace\) and arbitrary \(0\leq t_1, t_2 \leq T\) it holds that
\[
\intspace{G(u(t_2))} - \intspace{G(u(t_1))} = \intex{t_1}{t_2}{\pairh{u_t}{g(u)}}{dt} \fst
\]
\qedthm
\end{lem}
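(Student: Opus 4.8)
The plan is the classical density argument: first prove the identity when $u$ is replaced by a function $w \in \bochnerex{C^1}{[0,T]}{H^1(\Omega)}$ that is smooth in time, then pass to the limit using that such functions are dense in $\evspace$ (a standard fact from the evolution-triple calculus, cf. \cite{emmrich} and the Appendix). Throughout, the hypothesis $\abs{g'} \le c_0$ is used in two ways: it yields the growth bounds $\abs{g(s)} \le \abs{g(0)} + c_0\abs{s}$ and $\abs{G(s)} \le \abs{G(0)} + \abs{g(0)}\abs{s} + \tfrac{c_0}{2}\abs{s}^2$, and, together with $g \in C^1$, it makes the chain rule for Sobolev functions applicable.

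\emph{The smooth case.} Fix $w \in \bochnerex{C^1}{[0,T]}{H^1(\Omega)}$. By the growth bounds and boundedness of $\Omega$, the map $t \mapsto \intspace{G(w(t))}$ is finite-valued and $g(w(t)) \in L^2(\Omega)$ for every $t$; by the chain rule (cf. \cite{giltru}), $g(w(t)) \in H^1(\Omega)$ with $\partial_x g(w(t)) = g'(w(t))\,\partial_x w(t)$. I claim $t \mapsto \intspace{G(w(t))}$ is continuously differentiable with derivative $\pairh{w_t(t)}{g(w(t))}$; the lemma then follows for $w$ by the fundamental theorem of calculus. Using the Taylor identity $G(b) - G(a) = (b-a)\int_0^1 g\bigl(a+s(b-a)\bigr)\,ds$, the difference quotient $\tfrac{1}{h}\bigl(\intspace{G(w(t+h))} - \intspace{G(w(t))}\bigr)$ equals $\intspace{\bar g_h \cdot \tfrac{w(t+h)-w(t)}{h}}$ with $\abs{\bar g_h - g(w(t))} \le \tfrac{c_0}{2}\abs{w(t+h)-w(t)}$ pointwise. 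Since $\tfrac{w(t+h)-w(t)}{h} \to w_t(t)$ in $L^2(\Omega)$ and $g(w(t)) \in L^2(\Omega)$, splitting $\bar g_h = g(w(t)) + \bigl(\bar g_h - g(w(t))\bigr)$ shows the difference quotient tends to $\prodl{g(w(t))}{w_t(t)}$, the remainder being bounded by $\tfrac{c_0}{2}\norml{w(t+h)-w(t)}\,\norml{\tfrac{w(t+h)-w(t)}{h}} \to 0$. As $w_t(t) \in H^1(\Omega) \subseteq L^2(\Omega)$, this $L^2$-product equals $\pairh{w_t(t)}{g(w(t))}$, and continuity in $t$ follows from $w \in \bochnerex{C^1}{[0,T]}{H^1(\Omega)}$ and the $c_0$-Lipschitz property of $g$.

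\emph{The limit.} Pick $\sequence{u_k}{k \in \IN} \subset \bochnerex{C^1}{[0,T]}{H^1(\Omega)}$ with $u_k \to u$ in $\evspace$. Since $\evspace \embedding \bochnerex{C}{[0,T]}{L^2(\Omega)}$ (Proposition \ref{propevtrip}), $u_k(t_i) \to u(t_i)$ in $L^2(\Omega)$ for $i=1,2$, and the growth bound on $G$ together with the Cauchy--Schwarz inequality gives $\intspace{G(u_k(t_i))} \to \intspace{G(u(t_i))}$. Also $\partial_t u_k \to u_t$ in $\timespace$, so it remains to prove $g(u_k) \to g(u)$ in $\bochner{L^2}{H^1}$. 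Convergence in $L^2(\Omega_T)$ is immediate from the $c_0$-Lipschitz bound and $u_k \to u$ in $L^2(\Omega_T)$. For the spatial derivatives, write $\partial_x g(u_k) - \partial_x g(u) = g'(u_k)(\partial_x u_k - \partial_x u) + \bigl(g'(u_k) - g'(u)\bigr)\,\partial_x u$: the first term has $L^2(\Omega_T)$-norm at most $c_0\norm{\partial_x u_k - \partial_x u}{L^2(\Omega_T)} \to 0$, and for the second, after passing to a subsequence along which $u_k \to u$ pointwise a.e. in $\Omega_T$, continuity of $g'$ gives $g'(u_k) \to g'(u)$ a.e., so $\bigl(g'(u_k) - g'(u)\bigr)\partial_x u \to 0$ in $L^2(\Omega_T)$ by dominated convergence with majorant $2c_0\abs{\partial_x u}$; a subsequence-of-subsequences argument then yields convergence of the full sequence. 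Hence the pairings converge, $\intex{t_1}{t_2}{\pairh{\partial_t u_k}{g(u_k)}}{dt} \to \intex{t_1}{t_2}{\pairh{u_t}{g(u)}}{dt}$, and passing to the limit in the identity of the smooth case completes the proof.

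\emph{Main obstacle.} The one step that goes beyond routine estimates is the convergence $g(u_k) \to g(u)$ in $\bochner{L^2}{H^1}$ at the level of \emph{spatial gradients}; this is precisely where the boundedness of $g' = G''$ is indispensable, both to legitimise the Sobolev chain rule $\partial_x g(u_k) = g'(u_k)\,\partial_x u_k$ and to supply the dominating function for the gradient terms. Granting the density of smooth-in-time functions in $\evspace$ and the embedding $\evspace \embedding \bochnerex{C}{[0,T]}{L^2(\Omega)}$, both part of the standard evolution-triple toolkit recalled in the Appendix, everything else is elementary.
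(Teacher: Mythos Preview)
Your proof is correct and follows essentially the same route as the paper's: approximate $u$ by $u_k \in \bochnerex{C^1}{[0,T]}{H^1(\Omega)}$ via the density statement of Proposition~\ref{propevtrip}(iii), verify the identity for $u_k$, and pass to the limit using the embedding into $\bochnerex{C}{[0,T]}{L^2(\Omega)}$ together with $g(u_k) \to g(u)$ in $\bochner{L^2}{H^1}$. The only cosmetic differences are that the paper phrases the smooth case via Fr\'echet differentiability of $v \mapsto \intspace{G(v)}$ on $L^2(\Omega)$ and invokes Vitali's theorem where you use a direct difference-quotient computation and dominated convergence; both are equivalent here.
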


\begin{proof}
By Proposition \ref{propevtrip} (iii) we may choose a sequence \(\sequence{u^k}{k \in \IN} \subset \bochnerex{C^1}{[0,T]}{H^1(\Omega)}\) such that
\begin{alignat}{3}
u^k &\strongto u &&\strongtext &&\bochner{L^2}{H^1} \cma \label{prfchainstrconvu} \\
u^k_t &\strongto u_t &&\strongtext &&\bochnerex{L^2}{\openI}{H^1(\Omega)'} \label{prfchainstrconvut}
\end{alignat}
as \(k \to \infty\). In view of \eqref{prfchainstrconvu} we may in particular assume that \(u^k\) and \(u^k_x\) converge pointwise almost everywhere in \(\Omega_T\) as \(k \to \infty\). Furthermore, from the continuity of the embedding \(\evspace \embedding \bochnerex{C}{[0,T]}{L^2(\Omega)}\) (cf. Proposition \ref{propevtrip} (ii)) we deduce that 
\begin{alignat}{3}
u^k(t) &\strongto u(t) &&\strongtext &&L^2(\Omega) \label{prfchainstrconvuc}
\end{alignat}
for all \(t \in [0,T]\). Moreover, note that by the boundedness of \(g'\) the following growth conditions on \(g\) and \(G\) hold:
\begin{alignat*}{3}
\abs{g(s)} &\leq c_1 \abs{s} + c_2 \qquad &\forall& \, s \in \IR\\
\abs{G(s)} &\leq c_3 \abs{s}^2 + c_4 \qquad &\forall& \, s \in \IR
\end{alignat*}
Hence, using Vitali's convergence theorem and \eqref{prfchainstrconvu} we infer
\begin{alignat}{3}
g(u^k) &\strongto g(u) &&\strongtext &&\bochner{L^2}{H^1} \label{prfchainstrconvg}
\end{alignat}
as \(k \to \infty\).

We define the functional
\[
\mathbf{G} = \intspace{G(.)} : L^2(\Omega) \to \IR \fst
\]
By the differentiability of \(G\) and the boundedness of \(g'\) we get
\[
\intspace{G(v+h)} - \intspace{G(v)} = \intspace{g(v) \cdot h} + o\bigl(\norml{h}\bigr) \cma
\]
for arbitrary \(v, h \in L^2(\Omega)\). Hence, \(\mathbf{G}\) is differentiable in the sense of Fr\'echet, and since \(u^k \in \bochnerex{C^1}{[0,T]}{H^1(\Omega)}\) we may apply the chain rule for Fr\'echet-derivatives (see \cite[Proposition 4.10]{zeidler1}) to \(\mathbf{G} \circ u^k : [0,T] \to \IR \) and obtain
\[
\frac{d}{dt} \intspace{G(u^k(t))} = \intspace{g(u^k(t)) \cdot u^k_t(t)}
\]
for arbitrary \(t \in [0,T]\). The fundamental theorem of calculus yields
\begin{equation} \label{prfchainapprox}
\intspace{G(u^k(t_2))} - \intspace{G(u^k(t_1))} = \intex{t_1}{t_2}{\prodl{g(u^k)}{u^k_t}}{dt}
\end{equation}
for \(t_1, t_2 \in [0,T]\).

Letting \(k \to \infty\) in \eqref{prfchainapprox}, we observe that by \eqref{prfchainstrconvut} and \eqref{prfchainstrconvg}
\[
\intex{t_1}{t_2}{\prodl{g(u^k)}{u^k_t}}{dt} = \intex{t_1}{t_2}{\pairh{u^k_t}{g(u^k)}}{dt} \xrightarrow{k \to \infty} \intex{t_1}{t_2}{\pairh{u_t}{g(u)}}{dt}
\]
and by \eqref{prfchainstrconvuc} together with Vitali's convergence theorem
\[
\intspace{G(u^k(t))} \xrightarrow{k \to \infty} \intspace{G(u(t))}
\]
for any \(t \in [0, T]\). This proves the lemma.
\end{proof}

Now, note that by \(\eqref{chap3Gmon}\)
\[
\intspace{G_\epsilon(u_0)} \leq \intspace{G(u_0)} \leq C(u_0)
\]
independently of \(\epsilon > 0\) due to assumption (H2). Hence, combining \eqref{chap3entropyepsilon3} and Lemma \ref{lemchain} applied to the functions \(G_{\epsilon}\) yields the estimate
\begin{equation} \label{chap3entropyepsilon}
\sup\limits_{t \in [0,T]} \intspace{G_\epsilon(u^\epsilon(t))} + \intspacetime{\dfrac{\abs{u^\epsilon_{xx}}^2}{\sqrt{1+ \abs{u^\epsilon_x}^2}^3} + \delta \abs{u^\epsilon_{xx}}^2} \leq C(u_0)
\end{equation}
for \(0 < \epsilon \leq 1\), which will in the following be referred to as ``entropy estimate''.

\begin{rem}
In \cite{gruen1}, the author uses a very similar argument to derive the entropy estimate and, in addition, to reestablish the energy estimate in the setting of the standard thin-film equation in higher space dimensions. Regarding the latter, Lemma \ref{lemchain} seems to be well suited for this purpose in our case also, since the function
\[
s \mapsto F_\delta(s) = \Q{s} + \frac{\delta}{2} \abs{s}^2
\]
is convex and meets the conditions of Lemma \ref{lemchain} for \(\delta > 0\). However, note that the regularity of \(u^\epsilon_x\) is too weak to apply Lemma \ref{lemchain} or a similar argument directly. Moreover, it turns out to be rather difficult to mimic the above approximation argument because of the coupling and the nonlinearity of the pressure \(p^\epsilon\). This is the reason why we spend careful effort on preserving the energy estimate in each limit. \qedrem
\end{rem}




\subsection{Existence}

Using the continuity of \(u^\epsilon\) and the entropy estimate \eqref{chap3entropyepsilon}, we will now prove the following proposition:

\begin{prop}\label{proposition3}
Assume \assumptions and let \(0 < \delta \leq 1\). Then a pair of functions \((u, p) = (u^{\delta}, p^{\delta})\) exists such that
\begin{align}
u &\in \bochner{L^2}{H^2} \cap \bochner{L^\infty}{H^1} \cap \bochnerex{C}{[0,T]}{L^2(\Omega)} \cma \label{prop3regu} \\
u_t &\in \timespace \cma \label{prop3regut} \\
p &\in L^2(\Omega_T), \quad p_x \in L^2_{loc}(\possettimeabs{u}) \label{prop3regp} \\
u_x(t) &\in H^1_0(\Omega) \text{ a.e. in } (0,T) \cma \label{prop3boundary} \\
u(0) &= \approxu \label{prop3initial} \cma
\end{align}
which solves the following system:
\begin{subequations} \label{prop3sys}
\begin{align}
\inttime{\pairh{u_t}{v}} &+ \intspacetimeex{\possettimeabs{u}}{m(u) \, p_x \cdot v_x} = 0 &\forall& \, v \in \bochner{L^2}{H^1} \label{prop3sysa} \\
p &= - \ddx \leftA \Qex{u_x} + \delta u_x \rightA \label{prop3sysb}
\end{align}
\end{subequations}
Furthermore, the solution satisfies the energy estimate
\begin{equation} \label{prop3energy}
\begin{split}
\intspace{\Q{u_x(t)}} + \frac{\delta}{2} \norml{u_x(t)}^2 + \intspacetimeex{\possettimeabs{u}}{m(u)^r \, \abs{p_x}^2} \leq C(\approxu, r)
\end{split}
\end{equation}
for almost all \(t \in (0,T)\) and arbitrary \(r > 1\), and the estimate
\begin{equation} \label{prop3entropy}
\intspacetime{\dfrac{\abs{u_{xx}}^2}{\Q{u_x}^3} + \delta \, \abs{u_{xx}}^2} \leq C(\approxu)
\end{equation}
holds true. Finally, the function \(u\) is continuous on \(\overline{\Omega}_T\) and satisfies
\begin{align}
\abs{u(t_0,x_0)} &\leq C_0(u_0) \cma \label{prop3holder0} \\
\abs{u(t_2, x_2) - u(t_1, x_1)} &\leq C_1(u_0,\delta) \, \bigl( \abs{t_2 - t_1}^\frac{1}{8} +  \abs{x_2 - x_1}^\frac{1}{2} \bigr) \label{prop3holder1}
\end{align}
for all \((t_i, x_i) \in \overline{\Omega}_T\), \(i = 0, 1, 2\). \qedthm
\end{prop}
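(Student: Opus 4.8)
The plan is to pass to the limit $\epsilon\to 0$ in the family $(u^\epsilon,p^\epsilon)$ provided by Proposition~\ref{proposition2}, using the energy estimate \eqref{chap3energyepsilon}, the entropy estimate \eqref{chap3entropyepsilon} and the $\epsilon$-uniform bounds of Corollary~\ref{corcont}. First I would gather a priori bounds independent of $\epsilon$: testing \eqref{chap3sysepsilona} with $v\equiv 1$ and invoking Proposition~\ref{propevtrip} gives conservation of mass, which together with \eqref{chap3energyepsilon} and Poincar\'e's inequality yields $\norm{u^\epsilon}{\bochner{L^\infty}{H^1}}\le C(u_0,\delta)$, and with Corollary~\ref{corcont} also $\norm{u^\epsilon}{L^\infty(\Omega_T)}\le C(u_0)$; the $\delta\,\abs{u^\epsilon_{xx}}^2$ term in \eqref{chap3entropyepsilon} upgrades this to $\norm{u^\epsilon}{\bochner{L^2}{H^2}}\le C(u_0,\delta)$, whence $\norm{p^\epsilon}{L^2(\Omega_T)}\le C(u_0,\delta)$ via $p^\epsilon=-u^\epsilon_{xx}/\Q{u^\epsilon_x}^3-\delta u^\epsilon_{xx}$; and since $m_\epsilon(u^\epsilon)=m(u^\epsilon)+\epsilon$ is bounded on $\overline{\Omega}_T$ by a constant $C(u_0,m)$, the representation of $u^\epsilon_t$ afforded by \eqref{chap3sysepsilona} and H\"older's inequality (exactly as in the proof of Proposition~\ref{proposition2}) give $\norm{u^\epsilon_t}{\timespace}\le C(u_0,m)$. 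Passing to a subsequence, $u^\epsilon\weakto u$ in $\bochner{L^2}{H^2}$, $u^\epsilon\weakstarto u$ in $\bochner{L^\infty}{H^1}$, $u^\epsilon_t\weakto u_t$ in $\timespace$ and $p^\epsilon\weakto p$ in $L^2(\Omega_T)$; Simon's theorem~\ref{thmsimon} (applied as in Proposition~\ref{proposition1}) yields $u^\epsilon\strongto u$ in $\bochner{L^2}{H^1}$ and in $\bochnerex{C}{[0,T]}{L^2(\Omega)}$, establishing the regularity \eqref{prop3regu}--\eqref{prop3regut}; along a further subsequence $u^\epsilon_x\strongto u_x$ pointwise almost everywhere in $\Omega_T$, and Corollary~\ref{corcont} together with Arzel\`a--Ascoli gives $u^\epsilon\strongto u$ uniformly on $\overline{\Omega}_T$, so that $u$ is continuous and inherits \eqref{prop3holder0}--\eqref{prop3holder1}. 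The identity \eqref{prop3initial} then follows from $u^\epsilon(0)=u_0$, and \eqref{prop3boundary} from the fact that $\{w\in\bochner{L^2}{H^2}:w_x(t)\in H^1_0(\Omega)\text{ a.e.}\}$ is weakly closed.

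Next I would identify the nonlinear pressure exactly as in Proposition~\ref{proposition1}: since $\Q{u^\epsilon_x}^{-3}\le 1$ and $u^\epsilon_x\strongto u_x$ almost everywhere, Vitali's theorem gives $\Q{u^\epsilon_x}^{-3}\,w\strongto\Q{u_x}^{-3}\,w$ in $L^2(\Omega_T)$ for every $w\in L^2(\Omega_T)$; combining this with $u^\epsilon_{xx}\weakto u_{xx}$ and $\delta u^\epsilon_x\weakto\delta u_x$ shows $p^\epsilon\weakto-\ddx\bigl(\Qex{u_x}+\delta u_x\bigr)$ in $L^2(\Omega_T)$, which identifies $p$ and proves \eqref{prop3sysb} and the membership $p\in L^2(\Omega_T)$ in \eqref{prop3regp}. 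The estimate \eqref{prop3entropy} then follows from \eqref{chap3entropyepsilon} by the same Vitali-plus-weak-lower-semicontinuity argument, and Fatou's lemma applied to \eqref{chap3energyepsilon} gives the first two terms of \eqref{prop3energy}.

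What remains — and this is the heart of the proof — is the identification of the degenerate flux $m(u)\,p_x\,\chi_{\possettimeabs{u}}$, where the Bernis--Friedman mechanism enters. Since $u$ is continuous, $\possettimeabs{u}$ is open and can be exhausted by open sets $Q_k$ with $\overline{Q_k}\subset\possettimeabs{u}$ compact; on $\overline{Q_k}$ one has $\abs{u}\ge\rho_k>0$, hence $\abs{u^\epsilon}\ge\rho_k/2$ and $m_\epsilon(u^\epsilon)\ge(\rho_k/2)^n c_{\mobextra}>0$ once $\epsilon$ is small enough, so \eqref{chap3energyepsilon} bounds $p^\epsilon_x$ in $L^2(Q_k)$ uniformly in $\epsilon$. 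Consequently $p^\epsilon_x\weakto p_x$ in $L^2(Q_k)$ (the limit being the distributional derivative of the $L^2(\Omega_T)$-limit $p$), which proves $p_x\in L^2_{loc}(\possettimeabs{u})$ in \eqref{prop3regp}; and weak lower semicontinuity applied to $m_\epsilon(u^\epsilon)^{1/2}p^\epsilon_x\weakto m(u)^{1/2}p_x$ on $Q_k$ — using $m_\epsilon(u^\epsilon)^{1/2}\strongto m(u)^{1/2}$ uniformly on $\overline{Q_k}$ — followed by monotone convergence in $k$ yields the remaining term of \eqref{prop3energy}. To pass to the limit in the weak equation, let $v\in\bochner{L^2}{H^1}$ and split the flux integral over $\Omega_T$ into the contributions over $\zeroset{u}_T$, over $Q_k$, and over $\possettimeabs{u}\setminus Q_k$. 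On $\zeroset{u}_T$ one has $\abs{u^\epsilon}\le\norm{u^\epsilon-u}{L^\infty(\Omega_T)}$, so $\norm{m_\epsilon(u^\epsilon)}{L^\infty(\zeroset{u}_T)}\to 0$ (as $m(0)=0$), and by H\"older's inequality and \eqref{chap3energyepsilon} this contribution is bounded by $\norm{m_\epsilon(u^\epsilon)}{L^\infty(\zeroset{u}_T)}^{1/2}\,C(u_0)\,\norm{v}{\bochner{L^2}{H^1}}\to 0$. On $Q_k$ one uses $m_\epsilon(u^\epsilon)\,v_x\strongto m(u)\,v_x$ in $L^2(Q_k)$ against $p^\epsilon_x\weakto p_x$. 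On $\possettimeabs{u}\setminus Q_k$ the contribution is bounded by $C(u_0,m)\,\norm{v_x}{L^2(\possettimeabs{u}\setminus Q_k)}$ uniformly in $\epsilon$, which tends to $0$ as $k\to\infty$ by absolute continuity of the Lebesgue integral, while $\intspacetimeex{Q_k}{m(u)\,p_x\cdot v_x}\to\intspacetimeex{\possettimeabs{u}}{m(u)\,p_x\cdot v_x}$ by dominated convergence (the integrand lies in $L^1$ because $m(u)^{1/2}p_x\in L^2(\possettimeabs{u})$ and $m(u)$ is bounded). Letting first $\epsilon\to 0$ and then $k\to\infty$, together with $\inttime{\pairh{u^\epsilon_t}{v}}\to\inttime{\pairh{u_t}{v}}$, yields \eqref{prop3sysa}.

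I expect this last step to be the main obstacle: keeping the three-region estimates uniform in $\epsilon$ while exhausting the positivity set $\possettimeabs{u}$ requires some care. Everything else — the a priori estimates, the compactness extraction, and the identification of the nonlinear pressure — is routine and parallels the arguments already carried out for Propositions~\ref{proposition1} and~\ref{proposition2}.
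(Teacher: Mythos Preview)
Your proof is correct and follows the paper's overall strategy: pass to the limit $\epsilon\to 0$ using the energy and entropy estimates together with the $\epsilon$-uniform H\"older bound from Corollary~\ref{corcont}, apply Arzel\`a--Ascoli for uniform convergence, identify $p$ via pointwise convergence of $u^\epsilon_x$ and weak convergence of $u^\epsilon_{xx}$, and recover $p_x$ locally on the positivity set from the dissipation term in the energy estimate.

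The one noteworthy difference is in how you handle the flux identification and the third term of \eqref{prop3energy}. The paper splits the integrals at the level set $\{\abs{u}>\rho\}$ versus $\{\abs{u}\le\rho\}$ and lets $\rho\to 0$: on the latter set it uses $m(u^\epsilon)\lesssim\rho^n$ together with the energy bound to make the contribution small. With that approach the paper can prove weak convergence of $m(u^\epsilon)^{r/2}\,p^\epsilon_x$ in $L^2(\Omega_T)$ only for $r>1$ (the Cauchy--Schwarz splitting needs an extra factor $m(u^\epsilon)^{(r-1)/2}$ to generate a positive power of $\rho$), and hence obtains \eqref{prop3energy} only for $r>1$; this limitation is discussed explicitly in Remark~\ref{remarkenergyr}. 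Your compact-exhaustion argument is a genuine alternative: by establishing $m_\epsilon(u^\epsilon)^{1/2}p^\epsilon_x\weakto m(u)^{1/2}p_x$ only on each $Q_k$ (where it is immediate from uniform convergence of $u^\epsilon$ and the local $L^2$-bound on $p^\epsilon_x$), applying weak lower semicontinuity there, and then taking the supremum over $k$, you in fact obtain the dissipation bound already for $r=1$, which is stronger than what the statement requires and sidesteps the obstruction the paper notes. Conversely, the paper's level-set splitting is slightly more streamlined for the flux identification itself, since it handles the set $\{\abs{u}\le\rho\}$ in one estimate rather than separating $\{u=0\}_T$ from $\possettimeabs{u}\setminus Q_k$.
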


\begin{proof}[Proof of Proposition \ref{proposition3}]
Let \((u^\epsilon, p^\epsilon)\) be a solution in the sense of Proposition \ref{proposition2}.

\beginstep{(1)}{A priori estimates.} Note that conservation of mass holds by the argument in \eqref{chap3boundux2}. By the energy estimate \eqref{chap3energyepsilon} we have the following estimates independent of \(\epsilon > 0\):
\begin{align}
\norm{u^\epsilon}{\bochner{L^\infty}{H^1}} &\leq C(u_0, \delta) \label{prf3apriori1} \\
\intspacetime{ m_{\epsilon}(u^\epsilon_x) \, \abs{p^\epsilon_x}^2} &\leq C(u_0) \label{prf3apriori2}
\end{align}
In particular, the second bound implies that
\begin{align}
\sqrt{\epsilon} \, \norm{p^\epsilon_x}{L^2(\Omega_T)} &\leq C(u_0) \cma \label{prf3boundepsilonpx} \\
\intspacetime{ m(u^\epsilon_x) \, \abs{p^\epsilon_x}^2} &\leq C(u_0) \fst \label{prf3boundmpx}
\end{align}
By \eqref{prf3apriori1} and the entropy estimate \eqref{chap3entropyepsilon} we have
\begin{gather}
\norm{u^\epsilon}{\bochner{L^2}{H^2}} \leq C(u_0, \delta) \fst \label{prf3apriori3}
\end{gather}
Moreover, for \(p^\epsilon\) we get
\begin{equation} \label{prf3estimatep}
\begin{split}
\norm{p^\epsilon}{L^2(\Omega_T)}^2 &\leq \intspacetime{\biggl\lvert \dfrac{u^\epsilon_{xx}}{\Q{u^\epsilon_x}^3} + \delta u^\epsilon_{xx} \biggr\rvert^2} \\
&\leq 2 \intspacetime{\dfrac{\abs{u^\epsilon_{xx}}^2}{\Q{u^\epsilon_x}^6} + \delta^2 \abs{u^\epsilon_{xx}}^2} \\
&\leq 2 \intspacetime{\dfrac{\abs{u^\epsilon_{xx}}^2}{\Q{u^\epsilon_x}^3} + \delta \abs{u^\epsilon_{xx}}^2} \cma
\end{split}
\end{equation}
such that
\begin{gather}
\norm{p^\epsilon}{L^2(\Omega_T)} \leq C(u_0) \label{prf3apriori4}
\end{gather}
by the entropy estimate \eqref{chap3entropyepsilon}. Furthermore, using
\[
\norml{m_\epsilon(u^\epsilon) \, p^\epsilon_x} \leq C(u_0)
\]
due to the energy estimate \eqref{chap3energyepsilon} and the boundedness of \(u^\epsilon\) (cf. Corollary \ref{corcont}), we show as before that
\begin{gather}
\norm{u^\epsilon_t}{\timespace} \leq C(u_0,m) \fst \label{prf3apriori5}
\end{gather}
Finally, by Corollary \ref{corcont} the function \(u^\epsilon\) is continuous on \(\overline{\Omega}_T\) and satisfies
\begin{align}
\abs{u^\epsilon(t_0,x_0)} &\leq C_0(u_0) \cma \label{prf3holder0} \\
\abs{u^\epsilon(t_2, x_2) - u^\epsilon(t_1, x_1)} &\leq C_1(u_0, \delta) \, \bigl( \abs{t_2 - t_1}^\frac{1}{8} +  \abs{x_2 - x_1}^\frac{1}{2} \bigr) \label{prf3holder1}
\end{align}
for all \((t_i, x_i) \in \overline{\Omega}_T\), \(i = 0, 1, 2\).

\beginstep{(2)}{The limit \(\epsilon \limeps 0\).}
By Arzel\`{a}--Ascoli's theorem, there exists a subsequence, again denoted by \(\sequence{u^\epsilon}{\epsilon > 0}\), such that
\begin{alignat}{2}
u^\epsilon &\strongto u &&\text{ uniformly on } \overline{\Omega}_T \label{prf3uniconv}
\end{alignat}
as \(\epsilon \limeps 0\). Since the set of functions satisfying the estimates \eqref{prf3holder0} and \eqref{prf3holder1} is closed with respect to uniform convergence, \(u\) is continuous and satisfies \eqref{prop3holder0} and \eqref{prop3holder1} in the limit.

Using the a priori estimates \eqref{prf3apriori1}, \eqref{prf3apriori3}, \eqref{prf3apriori4}, \eqref{prf3apriori5} and Simon's theorem \ref{thmsimon}, we infer the following convergence results for a further subsequence, again denoted by \(\sequence{u^\epsilon}{\epsilon > 0}\) and \(\sequence{p^\epsilon}{\epsilon > 0}\), for \(\epsilon \limeps 0\):
\begin{alignat}{3}
u^\epsilon &\weakto u &&\weaktext &&\bochner{L^2}{H^2} \label{prf3weakconvu}\\
u^\epsilon &\weakstarto u &&\weakstartext &&\bochner{L^\infty}{H^1} \\
u^\epsilon &\strongto u &&\strongtext &&\bochner{L^2}{H^1} \label{prf3strconvu} \\
u^\epsilon &\strongto u &&\strongtext &&\bochnerex{C}{[0,T]}{L^2(\Omega)} \\
u^\epsilon_t &\weakto u_t &&\weaktext &&\timespace \\
p^\epsilon &\weakto p &&\weaktext &&L^2(\Omega_T) \label{prf3weakconvp}
\end{alignat}
This proves the regularity results \eqref{prop3regu} and \eqref{prop3regut}. \eqref{prop3boundary} and \eqref{prop3initial} follow from the same arguments as before.

Again, from the strong convergence of \(u^\epsilon_x\) in \(L^2(\Omega_T)\) \eqref{prf3strconvu} we deduce that
\begin{alignat}{3}
u^\epsilon_x &\strongto u_x &&\pwaetext &&\Omega_T \label{prf3uxpwae}
\end{alignat}
for a subsequence and hence argue as in the proof of Proposition \ref{proposition2} to infer that
\begin{alignat*}{3}
- \ddx \Qex{u^\epsilon_x} &\weakto - \ddx \Qex{u_x} &&\weaktext &&L^2(\Omega_T) \fst
\end{alignat*}
This proves identity \eqref{prop3sysb}.

Let us now show that indeed
\begin{equation} \label{prf3weakconvpx}
p^\epsilon_x \weakto p_x \weaktext L^2(U) \qquad \forall \, U \subset \subset \possettimeabs{u} \fst
\end{equation}
To this end, let \(\possubset \subset\subset \possettimeabs{u}\) be arbitrary and denote
\[
\posparam := \min_{(t,x) \in \overline{\possubset}} \abs{u(t,x)} > 0 \fst
\]
By the continuity of \(u\), we have \(\possubset \subset \possubsett := \possetex{\abs{u}}{\frac{\posparam}{2}}_T\), and by the uniform convergence \eqref{prf3uniconv} it holds that \(u^\epsilon(t,x) \geq \frac{\posparam}{4}\) on \(\possubsett\) for \(\epsilon > 0\) sufficiently small. Thus, by the energy estimate \eqref{chap3energyepsilon},
\[
c_{\mobextra} \left( \frac{\posparam}{4} \right)^n \intspacetimeex{\possubsett}{\abs{p^\epsilon_x}^2} \leq \intspacetime{m_{\epsilon}(u^\epsilon) \, \abs{p^\epsilon_x}^2} \leq C(u_0) \cma
\]
which implies
\[
\intspacetimeex{\possubsett}{\abs{p^\epsilon_x}^2} \leq C(u_0, c_{\mobextra}, \possubsett) \fst
\]
Hence, there exists a subsequence depending on \(\possubsett\) such that
\begin{alignat}{3}
p^\epsilon_x &\weakto q &&\weaktext &&L^2(\possubsett) \fst \label{prf3weakconvpxprev}
\end{alignat}
Since \(u\) is continuous on \(\overline{\Omega}_T\), there exists a finite number of rectangles \(R_1, R_2, \dotsc , R_k\) such that \(\possubset \subset \bigcup_{j = 1}^k R_j \subset \possubsett\). On each \(R_j\), \(j = 1, \dotsc, k\), we choose an arbitrary \(\varphi \in C^\infty_0(R_j)\) and deduce from \eqref{prf3weakconvp} and \eqref{prf3weakconvpxprev} that the relation
\[
\intspacetimeex{R_j}{q \cdot \varphi} = -\intspacetimeex{R_j}{p \cdot \varphi_x}
\]
holds true. Hence, we may identify \(q = p_x\) on \(\possubset\) and deduce that in fact the whole sequence converges, that is,
\[
p^\epsilon_x \weakto p_x \text{ weakly in } L^2(\possubset)
\]
as \(\epsilon \limeps 0\). The arbitrariness of \(\possubset\) proves \eqref{prf3weakconvpx} and in turn \eqref{prop3regp}.

We shall now prove that
\begin{equation}
\intspacetime{m_\epsilon(u^\epsilon) \, p^\epsilon_x \cdot v_x} \xrightarrow{\epsilon \limeps 0} \intspacetimeex{\possettimeabs{u}}{m(u) \, p_x \cdot v_x} \fst \label{prf3convmpint}
\end{equation}
for arbitrary \(v \in \bochner{L^2}{H^1}\). First of all, observe that
\[
\intspacetime{m_\epsilon(u^\epsilon) \, p^\epsilon_x \cdot v_x} = \intspacetime{m(u^\epsilon) \, p^\epsilon_x \cdot v_x} + \epsilon \intspacetime{p^\epsilon_x \cdot v_x} \cma
\]
and for the second term we have by \eqref{prf3boundepsilonpx}
\[
\epsilon \intspacetime{p^\epsilon_x \cdot v_x} \leq \epsilon \norm{p^\epsilon_x}{L^2(\Omega_T)} \norm{v_x}{L^2(\Omega_T)} \xrightarrow{\epsilon \limeps 0} 0 \fst
\]
Moreover, for arbitrary \(\rho > 0\) we split
\[
\begin{split}
\intspacetime{m(u^\epsilon) \, p^\epsilon_x \cdot v_x} &= \intspacetimeex{\possetex{\abs{u}}{\rho}_T}{m(u^\epsilon) \, p^\epsilon_x \cdot v_x} + \intspacetimeex{\setex{\abs{u} \leq \rho}_T}{m(u^\epsilon) \, p^\epsilon_x \cdot v_x} \\
&=: \mathcal{I}_1 + \mathcal{I}_2 \fst
\end{split}
\]
We deduce from the weak convergence of \(p^\epsilon_x\) \eqref{prf3weakconvpx}, the uniform convergence of \(u^\epsilon\) \eqref{prf3uniconv} and the boundedness of \(m(u^\epsilon)\) \eqref{prf3holder0}, that
\begin{equation} \label{prf3weakI1}
\mathcal{I}_1 \xrightarrow{\epsilon \limeps 0} \intspacetimeex{\possetex{\abs{u}}{\rho}_T}{m(u) \, p_x \cdot v_x} \fst
\end{equation}
On the other hand, using \eqref{prf3boundmpx} we may estimate
\begin{equation} \label{prf3boundmprho}
\begin{split}
\abs{\mathcal{I}_2} &= \Bigl\vert \intspacetimeex{\setex{\abs{u} \leq \rho}_T}{m(u^\epsilon) \, p^\epsilon_x \cdot v_x} \Bigr\vert \\
&\leq \Bigl( \intspacetimeex{\setex{\abs{u} \leq \rho}_T}{m(u^\epsilon) \, \abs{v_x}^2} \Bigr)^\frac{1}{2} \Bigl( \intspacetimeex{\setex{\abs{u} \leq \rho}_T}{m(u^\epsilon) \, \abs{p^\epsilon_x}^2} \Bigr)^\frac{1}{2} \\
&\leq \rho^\frac{n}{2} \cdot C(m_0) \, \norm{v}{\bochner{L^2}{H^1}} \, \Bigl( \intspacetimeex{\setex{\abs{u} \leq \rho}_T}{m(u^\epsilon) \, \abs{p^\epsilon_x}^2} \Bigr)^\frac{1}{2} \\
&\leq C(u_0, m_0, v) \, \rho^\frac{n}{2}
\end{split}
\end{equation}
independently of \(\epsilon > 0\). Hence, the arbitrariness of \(\rho > 0\) proves \eqref{prf3convmpint}, and equation \eqref{prop3sysa} follows.

It remains to prove the estimates \eqref{prop3energy} and \eqref{prop3entropy}. As before, we obtain the boundedness of the first and second term in the energy estimate \eqref{prop3energy} by the pointwise convergence of \(u^\epsilon_x\) \eqref{prf3uxpwae} and Fatou's lemma.

In view of the proof of Proposition \ref{proposition2}, we would like to prove that
\begin{alignat}{3} \label{prf3nowlsc}
m(u^\epsilon)^\frac{1}{2} \, p^\epsilon_x \weakto m(u)^\frac{1}{2} \, p_x \, \chi_{\possettimeabs{u}} &&\weaktext &&L^2(\Omega_T)
\end{alignat}
and infer that
\[
\intspacetimeex{\possettimeabs{u}}{m(u) \, \abs{p_x}^2} \leq C(u_0)
\]
by weak lower semicontinuity of the norm. However, note that we are not able to prove \eqref{prf3nowlsc} by the arguments above, namely, we would not succeed in estimating \(\mathcal{I}_2\) in terms of \(\rho\) as in \eqref{prf3boundmprho}, if we would replace \(m(u^\epsilon)\) by \(m(u^\epsilon)^\frac{1}{2}\). But, for any \(r > 1\) and arbitrary \(w \in L^2(\Omega_T)\), we still may estimate
\begin{equation*}
\begin{split}
&\Bigl\vert \intspacetimeex{\setex{\abs{u} \leq \rho}_T}{m(u^\epsilon)^\frac{r}{2} \, p^\epsilon_x \cdot w} \Bigr\vert \\
&\leq \Bigl( \intspacetimeex{\setex{\abs{u} \leq \rho}_T}{m(u^\epsilon)^{r-1} \, \abs{w}^2} \Bigr)^\frac{1}{2} \Bigl( \intspacetimeex{\setex{\abs{u} \leq \rho}_T}{m(u^\epsilon) \, \abs{p^\epsilon_x}^2} \Bigr)^\frac{1}{2} \\
&\leq \rho^{\frac{(r-1)n}{2}} \cdot C(m_0) \, \norm{w}{L^2(\Omega_T)} \, \Bigl( \intspacetimeex{\setex{\abs{u} \leq \rho}_T}{m(u^\epsilon) \, \abs{p^\epsilon_x}^2} \Bigr)^\frac{1}{2} \\
&\leq C(u_0, m_0, w) \, \rho^{\frac{(r-1)n}{2}} \fst
\end{split}
\end{equation*}
In addition, it holds by the same arguments as in \eqref{prf3weakI1} that
\[
\intspacetimeex{\possetex{\abs{u}}{\rho}_T}{m(u^\epsilon)^\frac{r}{2} \, p^\epsilon_x \cdot w} \xrightarrow{\epsilon \limeps 0} \intspacetimeex{\possetex{\abs{u}}{\rho}_T}{m(u)^\frac{r}{2} \, p_x \cdot w} \fst
\]
Hence, we deduce that
\begin{alignat*}{3}
m(u^\epsilon)^\frac{r}{2} \, p^\epsilon_x &\weakto m(u)^\frac{r}{2} \, p_x \, \chi_{\possettimeabs{u}} &&\weaktext &&L^2(\Omega_T)
\end{alignat*}
for any \(r > 1\). Moreover,
\[
\norm{m(u^\epsilon)^\frac{r}{2} \, p^\epsilon_x}{L^2(\Omega_T)}^2 \leq \norm{m(u^\epsilon)^{r-1}}{L^\infty(\Omega_T)} \intspacetime{m(u^\epsilon) \, \abs{p^\epsilon_x}^2} \leq C(u_0, r)
\]
by \eqref{prf3boundmpx} and \eqref{prf3holder0}. Therefore, weak lower semicontinuity of the norm proves the energy estimate \eqref{prop3energy}.

Finally, to prove \eqref{prop3entropy}, we observe once more that by \eqref{prf3uxpwae} \(u^\epsilon_x\) converges pointwise almost everywhere and that \(\Q{u^\epsilon_x}^{-3/2} \leq 1\) holds. This together with Vitali's convergence theorem and the weak convergence of \(u^\epsilon_{xx}\) \eqref{prf3weakconvu} yields
\begin{alignat*}{3}
\dfrac{u^\epsilon_{xx}}{\Q{u^\epsilon_x}^{\frac{3}{2}}} &\weakto \dfrac{u_{xx}}{\Q{u_x}^{\frac{3}{2}}} &&\weaktext &&L^2(\Omega_T) \fst
\end{alignat*}
by the arguments above. Thus, \eqref{prop3entropy} follows from weak lower semicontinuity of the norm and the entropy estimate \eqref{chap3entropyepsilon}.
\end{proof}

\begin{rem} \label{remarkenergyr}
Note that \eqref{prf3convmpint} would directly imply
\[
\intspacetimeex{\possettimeabs{u}}{m(u)^2 \, \abs{p_x}^2} \leq C(u_0) \fst
\]
However, it will turn out that the effort we spent on proving
\[
\intspacetimeex{\possettimeabs{u}}{m(u)^r \, \abs{p_x}^2} \leq C(u_0, r)
\]
for \(r > 1\) is necessary. In fact, we will need the energy estimate \eqref{prop3energy} for some \(1 < r < 2\) in the proof of Theorem \ref{theorem1} in order to identify the flux \(m(u) \, p_x\) by an analogous argumentation as in the preceding proof.
\qedrem
\end{rem}

\subsection{Nonnegativity}

We may indeed prove that, given \(n \geq 1\), any solution in the sense of Proposition \ref{proposition3} is nonnegative. 
Again, this result was introduced by Bernis and Friedman (see \cite[Theorem 4.1]{berfri}) in the setting of the standard thin-film equation. Their proof, which relies on the entropy estimate and the continuity of the solution, can be literally applied to our case.

For the upcoming results, we define
\begin{alignat*}{3}
g(s) &:= \lim\limits_{\epsilon \limeps 0} g_\epsilon(s) \qquad &\forall& \, s \in \IR \cma \\
G(s) &:= \lim\limits_{\epsilon \limeps 0} G_\epsilon(s) \qquad &\forall& \, s \in \IR \fst
\end{alignat*}

We are going to prove the following proposition:

\begin{prop} \label{proposition3pos}
Assume (H1)--(H2), let \(0 < \delta \leq 1\) and \(u = u^\delta\) be a solution in the sense of Proposition \ref{proposition3}.
\begin{enumerate}
\item[(i)] If \(n \geq 1\), then
\begin{equation} \label{prop3posnn}
u \geq 0
\end{equation}
holds on \(\overline{\Omega}_T\).
\item[(ii)] If \(n \geq 2\), we have
\begin{equation} \label{prop3boundG}
\intspace{G(u(t))} \leq C(u_0)
\end{equation}
for all \(t \in [0,T]\). \qedthm
\end{enumerate}
\end{prop}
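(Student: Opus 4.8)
\emph{Strategy.} Both assertions will be deduced from the uniform entropy bound: by \eqref{chap3entropyepsilon}, $\intspace{G_\epsilon(u^\epsilon(t))} \le C(u_0)$ for every $t \in [0,T]$ and every $0 < \epsilon \le 1$. The remaining ingredients are the uniform convergence $u^\epsilon \strongto u$ on $\overline{\Omega}_T$ and the continuity of $u$ (Proposition \ref{proposition3}, cf.\ \eqref{prf3uniconv}), together with the monotonicity $G_\epsilon \nearrow G$ from \eqref{chap3Gmon}. The one genuinely new point to record is the behaviour of $G$ near the origin: since $m(s) = \abs{s}^n\,\mobextra(s)$ with $\mobextra \ge c_{\mobextra} > 0$, for every $r \le 0$ the inner integral $\int_r^a \tfrac{1}{m(\rho)}\,d\rho$ diverges (its integrand behaves like $\abs{\rho}^{-n}$ with $n \ge 1$ as $\rho \to 0$, and the path of integration reaches $0$); hence, writing $G(s) = \int_s^a\bigl(\int_r^a \tfrac{1}{m(\rho)}\,d\rho\bigr)\,dr$, one gets $G(s) = +\infty$ for all $s < 0$ as soon as $n \ge 1$, and an analogous one-variable computation gives $G(0) = +\infty$ when $n \ge 2$.

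\emph{Part (i).} I would argue by contradiction. If $u(t_0,x_0) < 0$, continuity of $u$ yields $\eta > 0$ and a nondegenerate interval $J \subset \Omega$ with $u(t_0,\cdot) < -\eta$ on $J$, and \eqref{prf3uniconv} yields $u^\epsilon(t_0,\cdot) < -\tfrac{\eta}{2}$ on $J$ for all small $\epsilon$. Since $G_\epsilon$ is nonincreasing on $(-\infty,a]$ by \eqref{chap3gmon} (recall $a$ may be chosen above $\sup_{\overline{\Omega}_T}\abs{u^\epsilon}$ uniformly in $\epsilon$, by Corollary \ref{corcont}), it follows that
\[
\intspace{G_\epsilon(u^\epsilon(t_0))} \;\ge\; \int_J G_\epsilon\bigl(u^\epsilon(t_0,x)\bigr)\,dx \;\ge\; \mu(J)\,G_\epsilon\Bigl(-\tfrac{\eta}{2}\Bigr).
\]
By \eqref{chap3Gmon} and the monotone convergence theorem, $G_\epsilon(-\tfrac{\eta}{2}) \to G(-\tfrac{\eta}{2}) = +\infty$ as $\epsilon \limeps 0$, contradicting $\intspace{G_\epsilon(u^\epsilon(t_0))} \le C(u_0)$. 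Hence $u \ge 0$ on $\overline{\Omega}_T$.

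\emph{Part (ii).} Now $u \ge 0$ by part (i). The care needed is to avoid passing the two occurrences of $\epsilon$ (in $G_\epsilon$ and in $u^\epsilon$) to the limit simultaneously: I would fix $\epsilon_0 > 0$ and note that for $\epsilon < \epsilon_0$, \eqref{chap3Gmon} gives $\intspace{G_{\epsilon_0}(u^\epsilon(t))} \le \intspace{G_\epsilon(u^\epsilon(t))} \le C(u_0)$. Since $G_{\epsilon_0}$ is continuous and $u^\epsilon(t) \strongto u(t)$ uniformly on $\Omega$, letting $\epsilon \limeps 0$ yields $\intspace{G_{\epsilon_0}(u(t))} \le C(u_0)$ for each $\epsilon_0$ and every $t \in [0,T]$. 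Finally $G_{\epsilon_0} \nearrow G$ on $[0,a]$ as $\epsilon_0 \limeps 0$ by \eqref{chap3Gmon}, so monotone convergence gives $\intspace{G(u(t))} \le C(u_0)$, i.e.\ \eqref{prop3boundG}.

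\emph{Main obstacle.} The bookkeeping (continuity, the choice of $a$, passing uniform limits through the continuous $G_{\epsilon_0}$) is routine; the points that really have to be got right are, first, the elementary computation that $G \equiv +\infty$ on $(-\infty,0)$ for $n \ge 1$ and $G(0) = +\infty$ for $n \ge 2$ — this is the mechanism by which a finite a priori bound forces (non)negativity, and the only place where the hypotheses on $n$ enter — and, second, the order in which the two $\epsilon$-limits are taken in part (ii), which is dictated by the monotonicity \eqref{chap3Gmon}.
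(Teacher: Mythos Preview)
Your proof is correct. Part (i) is essentially the paper's argument, packaged via the monotonicity of $G_\epsilon$ rather than via $g_\epsilon$ directly.

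Part (ii) is a genuinely different and cleaner route. The paper first establishes the intermediate fact $\mu\bigl(\zeroset{u(t)}\bigr)=0$ for $n\ge 2$ (using the growth $G(\rho)\sim\log\tfrac{1}{\rho}$ or $\rho^{2-n}$ near zero), and only then passes to the limit via Fatou on the set $\posset{u(t)}$ where $G_\epsilon(u^\epsilon)\to G(u)$ pointwise. Your two-step device---freeze $\epsilon_0$, use continuity of $G_{\epsilon_0}$ and uniform convergence of $u^\epsilon(t)$ to get $\intspace{G_{\epsilon_0}(u(t))}\le C(u_0)$, then let $\epsilon_0\downarrow 0$ by monotone convergence---bypasses the zero-set analysis entirely, and in fact your argument never uses the hypothesis $n\ge 2$. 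What the paper's detour buys is the intermediate result $\mu\bigl(\zeroset{u(t)}\bigr)=0$ itself, which is invoked later in the proof of Theorem~\ref{theorem2}; but that fact is recoverable a posteriori from your conclusion \eqref{prop3boundG} together with $G(0)=+\infty$ for $n\ge 2$, so nothing is lost.
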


\begin{proof}
%

As in \cite[Section 4]{berfri}, one may calculate the following growth properties for the function \(G(s) = \lim_{\epsilon \limeps 0} G_\epsilon(s)\) for \(0 < s \leq a\) (remember the choice of \(a\) in \eqref{chap3defa}):
\begin{equation} \label{chap3growthG}
G(s) =
\begin{cases}
c_0 + \mathcal{O}(s^{2-n}) &\text{ if } 1 < n < 2 \\
c_1 \log{\frac{1}{s}} + \mathcal{O}(1) &\text{ if } n = 2 \\
c_2 s^{2-n} + R(s) &\text{ if } n > 2
\end{cases}
\end{equation}
Here, the positive constants \(c_0\), \(c_1\) and \(c_2\) depend on \(\mobextra\) only and we have
\[
R(s) =
\begin{cases}
\mathcal{O}(1) &\text{ if } 2 < n < 3 \cma \\
\mathcal{O}(\log{\frac{1}{s}}) &\text{ if } n = 3 \cma \\
\mathcal{O}(s^{2-n}) &\text{ if } n > 3 \fst
\end{cases}
\]

Now, let \(u\) be a solution in the sense of Proposition \ref{proposition3} and \(\sequence{u^\epsilon}{\epsilon > 0}\) the sequence that we have obtained in the course of the preceding proof. In particular, we may suppose that
\begin{alignat}{2}
u^\epsilon &\strongto u &&\text{ uniformly on } \overline{\Omega}_T \label{prf3posuniconv}
\end{alignat}
as \(\epsilon \to 0\), and the functions \(\sequence{u^\epsilon}{\epsilon > 0}\) satisfy the entropy estimate \eqref{chap3entropyepsilon}.

\beginstep{(1)}{ad (i).} We argue by contradiction and assume that there exists a point \((t_0, x_0) \in \overline{\Omega}_T\) such that \(u(t_0, x_0) < 0\). Due to \eqref{prf3posuniconv}, there is \(\sigma > 0\) such that
\[
u^\epsilon(t_0, x) < -\sigma
\]
for \(x \in \overline{\Omega}\), \(\abs{x - x_0} < \sigma\) and \(\epsilon > 0\) sufficiently small. For such \(x\) we obtain
\[
G_\epsilon(u^\epsilon(t_0, x)) = - \int\limits_{u^\epsilon(t_0, x)}^a {g_\epsilon(s) \, ds} \geq - \int\limits_{-\sigma}^0 {g_\epsilon(s) \, ds} \xrightarrow{\epsilon \limeps 0} - \int\limits_{-\sigma}^0 {g(s) \, ds}
\]
by the monotone convergence theorem.
However, since \(g(s) = -\infty\) for \(s < 0\) and \(n \geq 1\), the integral on the right-hand side is equal to \(+\infty\). Hence,
\[
\liminf\limits_{\epsilon \limeps 0} \intspace{G_\epsilon(u^\epsilon(t_0, x))} = + \infty
\]
in contradiction to the entropy estimate \eqref{chap3entropyepsilon}.

\beginstep{(1)}{ad (ii).} Let now \(n \geq 2\). We shall at first prove that
\begin{equation} \label{chap3muE}
\mu \left(\zeroset{u(t)}\right) = 0
\end{equation}
for all \(t \in [0,T]\). To this end, we argue again by contradiction and assume that there exists \(t_0 \in [0,T]\) such that \(\mu(E) > 0\), where \(E := \zeroset{u(t_0)} \subseteq \overline{\Omega}\). Let \(\rho > 0\). In view of \eqref{prf3posuniconv}, it holds that
\[
u^\epsilon(t_0, x) < \rho
\]
for \(x \in E\) and \(\epsilon > 0\) sufficiently small. Similarly as before, it follows from the monotone convergence theorem that
\[
 G_\epsilon(u^\epsilon(t_0, x)) = - \int\limits_{u^\epsilon(t_0, x)}^a {g_\epsilon(s) \, ds} \geq - \int\limits_{\rho}^{a}{g_\epsilon(s) \, ds} \xrightarrow{\epsilon \limeps 0} -\int\limits_{\rho}^{a}{g(s) \, ds} \fst
\]
and by \eqref{chap3growthG} we have
\[
-\int\limits_{\rho}^{a}{g(s) \, ds} = G(\rho) \geq
\begin{cases}
c \, \log \frac{1}{\rho} &\text{ if } n = 2 \cma \\
c \, \rho^{2-n} &\text{ if } n > 2 \cma
\end{cases}
\]
for a positive constant \(c > 0\) and \(\rho > 0\) sufficiently small. Therefore,
\[
\liminf\limits_{\epsilon \limeps 0} \intspace{G_\epsilon(u^\epsilon(t_0, x))} \geq
\begin{cases}
c \, \log \frac{1}{\rho} \, \mu(E) &\text{ if } n = 2 \cma \\
c \, \rho^{2-n} \, \mu(E) &\text{ if } n > 2 \fst
\end{cases}
\]
Letting \(\rho \limeps 0\), we obtain a contradiction to the entropy estimate \eqref{chap3entropyepsilon}, and \eqref{chap3muE} follows. Finally, we have for every point \((t,x)\in [0,T] \times \Omega\) where \(u(t,x) > 0\)
\[
G_\epsilon(u^\epsilon(t,x)) \xrightarrow{\epsilon \limeps 0} G(u(t,x)) \fst
\]
Hence, using \eqref{chap3muE}, we deduce from the entropy estimate \eqref{chap3entropyepsilon} and Fatou's lemma that
\[
\intspace{G(u(t, x))} \leq C(u_0)
\]
for all \(t \in [0,T]\), and the proof is complete. \qedhere
\end{proof}

\begin{rem}
Observe that \(\mu \left(\zeroset{u}_T \right) = 0\) holds true if \(n \geq 2\), which we have proved above as an intermediate result. Following the lines of \cite[Theorem 4.1]{berfri}, we could moreover show that \(u > 0\) on \(\overline{\Omega}_T\) if \(n \geq 4\). However, note that these results would be worthless in view of the upcoming limit process \(\delta \limeps 0\), even if the approximating solutions \(\sequence{u^\delta}{\delta > 0}\) would converge uniformly on \(\overline{\Omega}_T\). \qedrem
\end{rem}

\begin{rem}
Since the solution \(u\) in the sense of Proposition \ref{proposition3} is nonnegative because of Proposition \ref{proposition3pos} and since we assume \(n \geq 1\) throughout this work, we may replace
\[
\possettimeabs{u} = \possettime{u}
\]
and similar expressions on every occurrence. \qedrem
\end{rem}

\clearsection

\section{Proofs of the Theorems} \label{secfinal}

In the previous step, we obtained a solution \((u^\delta, p^\delta)\), \(0 < \delta < 1\), of the following system:
\begin{subequations} \label{chap4sysdelta}
\begin{align}
\inttime{\pairh{u^\delta_t}{v}} &+ \intspacetimeex{\possettime{u^\delta}}{m(u^\delta) \, p^\delta_x \cdot v_x} = 0 &\forall& \, v \in \bochner{L^2}{H^1} \label{chap4sysdeltaa} \\
p^\delta &= - \ddx \leftA \Qex{u^\delta_x} + \delta u^\delta_x \rightA \label{chap4sysdeltab}
\end{align}
\end{subequations}
In particular, note that the solution is continuous and nonnegative on \(\overline{\Omega}_T\) by Proposition \ref{proposition3pos} (i).
Moreover, the solution satisfies the energy estimate
\begin{equation} \label{chap4energydelta}
\begin{split}
\intspace{\Q{u^\delta_x(t)}} + \frac{\delta}{2} \norml{u^\delta_x(t)}^2 + \intspacetimeex{\possettime{u^\delta}}{m(u^\delta)^r \, \abs{p^\delta_x}^2} \leq C(\approxu, r)
\end{split}
\end{equation}
for almost all \(t \in (0,T)\) and arbitrary \(r > 1\), and
\begin{equation} \label{chap4entropydelta}
\intspacetime{\dfrac{\abs{u^\delta_{xx}}^2}{\Q{u^\delta_x}^3} + \delta \abs{u^\delta_{xx}}^2} \leq C(u_0)
\end{equation}
holds. If in addition \(n \geq 2\), we have
\begin{equation} \label{chap4boundGdelta}
\intspace{G(u^\delta(t))} \leq C(u_0)
\end{equation}
for all \(t \in [0,T]\) by Proposition \ref{proposition3pos} (ii).

In the last step, we intend to let \(\delta \limeps 0\) and to prove Theorems \ref{theorem1} and \ref{theorem2}. In view of the estimates \eqref{chap4energydelta} and \eqref{chap4entropydelta}, we will lose control of the \(\bochner{L^\infty}{H^1}\)- and \(\bochner{L^2}{H^2}\)-norm of \(\sequence{u^\delta}{\delta > 0}\). In other words, the operator \(A_\delta\) is not uniformly coercive for \(\delta \limeps 0\). Furthermore, the linear growth of the function \(s \mapsto \Q{s}\) is not sufficient to deduce any compactness results for a sequence of functions \(\sequence{u^\delta}{\delta > 0} \subset \bochner{L^\infty}{W^{1,1}}\) satisfying the energy estimate \eqref{chap4energydelta}. Of course, this is a well-known issue of non-parametric mean curvature type equations. However, due to the degenerate and higher-order structure of our equation, the usual approaches known from this theory are not applicable to our problem. (For an overview of methods for mean curvature type equations, we refer the reader to \cite{giltru}.)

Instead, we hope that the uniform bounds on \(\sequence{u^\delta}{\delta > 0}\) induced by the estimates \eqref{chap4energydelta} and \eqref{chap4entropydelta}, namely
\[
\esssup_{t \in (0,T)} \intex{\Omega}{ }{\Q{u^\delta_x(t)}}{dx} \leq C(u_0) \cma \qquad \intspacetime{\dfrac{\abs{u^\delta_{xx}}^2}{\Q{u^\delta_x}^3}} \leq C(u_0) \cma
\]
are strong enough to ensure an uniform bound on \(\sequence{u^\delta_x}{\delta > 0}\) or even \(\sequence{u^\delta_{xx}}{\delta > 0}\) in a reflexive space. This does not seem to be entirely true either. But, supposing that
\[
\intspace{\Q{u^\delta_x(t)}} \leq C(u_0, t), \qquad \intspace{\dfrac{\abs{u^\delta_{xx}(t)}^2}{\Q{u^\delta_x(t)}^3}} \leq C(u_0, t) 
\]
hold true for \(t \in (0,T)\), we are able to show that
\[
\norm{u^\delta(t)}{H^2(\Omega)} \leq C(u_0, t)
\]
independently of \(\delta > 0\). This is the basic idea of the following approach and the results in Theorem \ref{theorem1}. That is, we will obtain very low regularity results for the solutions \(u\) and \(p\) as functions on \(\Omega_T\), but for almost all \(t \in (0,T)\) the functions \(u(t)\) and \(p(t)\) will possess more regular representations as functions on \(\Omega\).

Before we prove Theorems \ref{theorem1} and \ref{theorem2}, we are going to derive several helpful results in the following section.

\subsection{Preliminaries}

The first lemma gives a notion of estimates pointwise in time for sequences of functions that are uniformly bounded in \(\bochnerex{L^1}{I}{\IR}\). The very same result was proved in \cite[Theorem 8.2]{gruen2}.

\begin{lem} \label{lemunibound}
Let \(\sequence{f_k}{k \in \IN} \subset \bochnerex{L^1}{I}{\IR}\) such that \(f_k \geq 0\) for \(k \in \IN\) and
\begin{equation} \label{lemuniboundcond}
\norm{f_k}{\bochnerex{L^1}{I}{\IR}} \leq C \cma
\end{equation}
where \(C > 0\) is a constant independent of \(k \in \IN\). Then there exists a subsequence \(\sequence{f_{k_j}}{j \in \IN}\) such that
\[
\abs{f_{k_j}(t)} \leq C(t) < \infty
\]
for \(j \in \IN\) and almost all \(t \in (0,T)\). \qedthm
\end{lem}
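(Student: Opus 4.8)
The plan is to extract a subsequence whose $L^1$-norms on $I$ converge to a finite limit, and then argue that a sequence of nonnegative $L^1$-functions with bounded integrals cannot blow up pointwise on a set of positive measure along that subsequence. First I would pass to a subsequence, still written $\sequence{f_k}{k \in \IN}$, such that $\lim_{k \to \infty} \norm{f_k}{\bochnerex{L^1}{I}{\IR}} =: L$ exists (this uses only that the sequence of norms is bounded by $C$, hence has a convergent subsequence, with $L \leq C$). The key quantitative tool is then Fatou's lemma applied to the nonnegative functions $f_k$: we have
\[
\intex{I}{}{\liminf_{k \to \infty} f_k(t)}{dt} \leq \liminf_{k \to \infty} \intex{I}{}{f_k(t)}{dt} = L < \infty,
\]
so the function $t \mapsto \liminf_{k\to\infty} f_k(t)$ is finite for almost all $t \in (0,T)$. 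This is not quite the statement — we want a fixed subsequence $\sequence{f_{k_j}}{j \in \IN}$ along which $f_{k_j}(t)$ is bounded (in $j$) for a.e. $t$, which is stronger than the $\liminf$ being finite.

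To upgrade to the claimed statement I would do the following. For each $m \in \IN$ consider the set $A_m := \setex{t \in (0,T) : f_m(t) > \lambda_m}$, where $\lambda_m > 0$ is chosen so large that $\mu(A_m) \leq 2^{-m}$; this is possible by Chebyshev's inequality since $\mu(\setex{f_m > \lambda}) \leq C/\lambda$, so it suffices to take $\lambda_m := C \, 2^{m}$. By Borel--Cantelli, the set $A := \limsup_{m \to \infty} A_m = \bigcap_{M} \bigcup_{m \geq M} A_m$ has measure zero, since $\sum_m \mu(A_m) < \infty$. For $t \in (0,T) \setminus A$ there is $M(t) \in \IN$ with $t \notin A_m$ for all $m \geq M(t)$, i.e. $f_m(t) \leq \lambda_m = C \, 2^m$ for all $m \geq M(t)$; in particular $\sup_{m} f_m(t) < \infty$ for almost all $t$. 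Thus already the full sequence $\sequence{f_k}{k \in \IN}$ satisfies $\abs{f_k(t)} \leq C(t) < \infty$ for almost all $t$, and we may take the subsequence to be the sequence itself (or any subsequence thereof); defining $C(t) := \sup_k f_k(t)$ on $(0,T)\setminus A$ and $C(t) := 0$ on $A$ completes the proof.

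The only mildly delicate point is the quantifier structure: "$\abs{f_{k_j}(t)} \leq C(t)$ for all $j \in \IN$ and almost all $t$" requires a bound uniform in $j$ on a co-null set that does not depend on $j$, and the naive Fatou argument only controls the $\liminf$. The Borel--Cantelli / Chebyshev argument above is exactly what supplies this uniformity, so I expect that bookkeeping — rather than any hard analysis — to be the main obstacle; everything else (passing to a subsequence so the norms converge, Chebyshev's inequality, Borel--Cantelli) is entirely standard. One could alternatively cite \cite[Theorem 8.2]{gruen2} verbatim, as the excerpt notes the identical statement is proved there.
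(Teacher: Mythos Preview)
Your first paragraph is precisely the paper's proof: the paper considers the set $E = \{t : \liminf_k f_k(t) = +\infty\}$, applies Fatou to $\min\{f_k,L\}$ to get $L\,\mu(E) \leq C$ for all $L$, concludes $\mu(E)=0$, and stops there. You are right that this yields only $\liminf_k f_k(t) < \infty$ a.e., and you correctly flag the gap between that and a \emph{single} subsequence bounded at a.e.\ $t$.

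Your Borel--Cantelli upgrade, however, does not close that gap. From ``$f_m(t) \leq C\,2^m$ for all $m \geq M(t)$'' you \emph{cannot} infer $\sup_m f_m(t) < \infty$, because the bound $C\,2^m$ itself diverges. A concrete counterexample: on $I=(0,1)$ take independent measurable sets $A_m$ with $\mu(A_m)=1/m$ and set $f_m = m\,\chi_{A_m}$. Then $\norm{f_m}{L^1}=1$ for every $m$, yet by the second Borel--Cantelli lemma a.e.\ $t$ lies in infinitely many $A_m$, so $\sup_m f_m(t)=\infty$ a.e. In this example your sets $\{f_m > C\,2^m\}$ are empty for every $m\geq 1$ (since $f_m \leq m < 2^m$), so your argument yields $A=\emptyset$ and then asserts that the full sequence works --- but it does not. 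Hence the sentence ``already the full sequence $\sequence{f_k}{k\in\IN}$ satisfies $\abs{f_k(t)}\leq C(t)<\infty$'' is false.

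Note that in this same example a genuine subsequence \emph{does} satisfy the conclusion (e.g.\ $k_j=j^2$, since then $\sum_j \mu(A_{k_j})<\infty$ and the first Borel--Cantelli gives $\mu(\limsup_j A_{k_j})=0$). So the lemma is not wrong; the point is that the subsequence extraction is essential, and your Chebyshev thresholds $\lambda_m = C\,2^m$ grow too fast to supply a uniform pointwise bound. The paper, for its part, is content with the $\liminf$ statement and does not spell out the passage to a single subsequence either.
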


\begin{proof}
Consider the set
\[
E := \{ t \in (0,T) : \liminf_{k \to \infty} f_k(t) = +\infty \}
\]
and assume that
\begin{equation} \label{prfuniboundblitz}
\mu(E) > 0 \fst
\end{equation}
For arbitrary \(L > 0\), we have by Fatou's lemma and \eqref{lemuniboundcond}
\[
\begin{split}
L \cdot \mu(E) &= \int\limits_E {\liminf\limits_{k \to \infty} \, \min \lbrace f_k(t), L \rbrace \, dt} \leq \liminf\limits_{k \to \infty} \int\limits_E {\min \lbrace f_k(t), L \rbrace \, dt} \\
&\leq \liminf\limits_{k \to \infty} \int\limits_E {f_k(t) \, dt} \leq C \fst
\end{split}
\]
Hence, letting \(L \to +\infty\) contradicts \eqref{prfuniboundblitz} and the lemma is proved.
\end{proof}

Given the estimates \eqref{chap4energydelta} and \eqref{chap4entropydelta} and using the previous Lemma \ref{lemunibound}, we are able to prove an uniform \(H^2(\Omega)\)-bound on \(\sequence{u^\delta(t)}{\delta > 0}\) for almost all \(t \in (0,T)\). This will be the most crucial result in this work, as otherwise no identification of the pressure \(p\) and of the flux \(m(u) \, p_x\) would be possible. Since we will consider the sequence \(\sequence{u^\delta(t)}{\delta > 0} \subset H^2(\Omega)\) for arbitrary but fixed \(t \in (0,T)\), we will drop the dependency on \(t\) for shorthand notation in the following lemma.

\begin{lem} \label{lemcrazy}
Let \(\sequence{u^\delta}{\delta > 0} \subset H^2(\Omega)\) satisfy the estimates
\begin{gather}
\intspace{u^\delta} \leq c_0 \cma \label{lemcrazyc0} \\
\intspace{\Q{u^\delta_x}} \leq c_1 \cma \label{lemcrazyc1} \\
\intspace{\dfrac{\abs{u^\delta_{xx}}^2}{\Q{u^\delta_x}^3}} \leq c_2 \fst \label{lemcrazyc2}
\end{gather}
Then \(\sequence{u^\delta}{\delta > 0}\) is uniformly bounded in \(W^{1, \infty}(\Omega) \cap H^2(\Omega)\) independently of \(\delta > 0\). \qedthm
\end{lem}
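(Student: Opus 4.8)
The plan is to reduce the statement to a uniform $L^\infty$-bound on $v:=u^\delta_x$. From \eqref{lemcrazyc1} and $\Q{s}\ge\abs{s}$ one has $\norm{u^\delta_x}{L^1(\Omega)}\le c_1$; together with \eqref{lemcrazyc0} (using $u^\delta\ge 0$ in our setting, cf.~Proposition \ref{proposition3pos}, so that $\intspace{\abs{u^\delta}}\le c_0$) this gives $\norm{u^\delta}{W^{1,1}(\Omega)}\le C$ and hence, by the one-dimensional embedding $W^{1,1}(\Omega)\embedding L^\infty(\Omega)$, a uniform bound $\norm{u^\delta}{L^\infty(\Omega)}\le C$. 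Moreover, once $\norm{u^\delta_x}{L^\infty(\Omega)}\le C$ is known, \eqref{lemcrazyc2} yields $\norm{u^\delta_{xx}}{L^2(\Omega)}^2\le\bigl(1+\norm{u^\delta_x}{L^\infty(\Omega)}^2\bigr)^{3/2}c_2\le C$, and the assertion follows. So the real issue is to bound $\norm{v}{L^\infty(\Omega)}$ independently of $\delta$.

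To do so I would use the substitution
\[
w:=\Qex{v}=\Qex{u^\delta_x}\in H^1(\Omega),\qquad w_x=\frac{u^\delta_{xx}}{\Q{u^\delta_x}^3},
\]
which satisfies $\abs{w}<1$, $1-w^2=\Q{v}^{-2}$, $\abs{v}=\abs{w}/\sqrt{1-w^2}$, and $w_x^2/(1-w^2)^{3/2}=\abs{u^\delta_{xx}}^2/\Q{u^\delta_x}^3$. The point is that the exponent $\tfrac34$ is subcritical, so that
\[
W(s):=\int\limits_0^s\frac{dr}{(1-r^2)^{3/4}},\qquad s\in[-1,1],
\]
is a bounded, odd, strictly increasing $C^1$-diffeomorphism of $[-1,1]$ onto $[-\Lambda,\Lambda]$ with $\Lambda:=W(1)<\infty$. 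Setting $y:=W(w)\in H^1(\Omega)$, $y_x=w_x/(1-w^2)^{3/4}$, estimate \eqref{lemcrazyc2} reads
\[
\norm{y_x}{L^2(\Omega)}^2=\intspace{\frac{w_x^2}{(1-w^2)^{3/2}}}=\intspace{\frac{\abs{u^\delta_{xx}}^2}{\Q{u^\delta_x}^3}}\le c_2,
\]
while \eqref{lemcrazyc1} reads $\intspace{\psi(\abs{y})}=\intspace{\abs{v}}\le c_1$, where $\psi(t):=W^{-1}(t)/\sqrt{1-W^{-1}(t)^2}$ is increasing on $[0,\Lambda)$ and, since $1-r^2\le 2(1-r)$ forces $1-W^{-1}(t)\le 2^{-5}(\Lambda-t)^4$, satisfies $\psi(t)\ge(\Lambda-t)^{-2}$ on a fixed left-neighbourhood $[\Lambda-\kappa,\Lambda)$ of $\Lambda$. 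Since $u^\delta\in H^2(\Omega)\embedding C^1(\overline\Omega)$, the number $M:=\norm{y}{L^\infty(\Omega)}=\max_{\overline\Omega}\abs{y}<\Lambda$ is attained, and $\abs{v}=\psi(\abs{y})$ gives $\norm{v}{L^\infty(\Omega)}=\psi(M)$; hence it suffices to find $\eta=\eta(c_1,c_2,l)>0$ with $M\le\Lambda-\eta$.

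For the separation of $M$ from $\Lambda$, let $x_0\in\overline\Omega$ satisfy $\abs{y(x_0)}=M$; after a reflection assume $y(x_0)=M$. In one space dimension $y(x)\ge y(x_0)-\norm{y_x}{L^2(\Omega)}\abs{x-x_0}^{1/2}\ge M-\sqrt{c_2}\,\abs{x-x_0}^{1/2}$, so $v(x)=\psi(y(x))\ge\psi\bigl(M-\sqrt{c_2}\,\abs{x-x_0}^{1/2}\bigr)$ wherever the argument is nonnegative. With $\bar\rho:=\min\{M^2/c_2,\,l\}$ and a one-sided interval of length $\bar\rho$ at $x_0$ contained in $\overline\Omega$ (possible since $\bar\rho\le l$), the change of variables $t=M-\sqrt{c_2}\,\rho^{1/2}$ (where $\rho=\abs{x-x_0}$) yields
\[
c_1\ge\intspace{\abs{v}}\ge\frac{2}{c_2}\int\limits_{t_\ast}^{M}\psi(t)\,(M-t)\,dt,\qquad t_\ast:=\max\{0,\,M-\sqrt{c_2\,l}\}.
\]
If $M$ were close to $\Lambda$, then $\max\{t_\ast,\Lambda-\kappa\}\le M-\epsilon_0$ for a fixed $\epsilon_0=\epsilon_0(c_1,c_2,l)>0$, and then $\psi(t)\ge(\Lambda-t)^{-2}$ on $[\Lambda-\kappa,M]$ together with the elementary identity $\int_{M-\epsilon_0}^{M}\frac{M-t}{(\Lambda-t)^2}\,dt=\log\frac{\Lambda-M+\epsilon_0}{\Lambda-M}+\frac{\Lambda-M}{\Lambda-M+\epsilon_0}-1$ would make the right-hand side above diverge logarithmically as $M\to\Lambda$, contradicting the fixed bound $c_1$. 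Hence $\Lambda-M\ge\eta(c_1,c_2,l)>0$, so $\norm{u^\delta_x}{L^\infty(\Omega)}=\psi(M)\le\psi(\Lambda-\eta)<\infty$, and the proof is complete.

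The hard part is exactly this quantitative separation. The naive approach — evaluating $\psi$ at a fixed fraction of the maximum, e.g.\ $c_1\gtrsim\mu(\{y\ge M/2\})\,\psi(M/2)\gtrsim(M^2/c_2)\,\psi(M/2)$ — fails, since the right-hand side stays bounded as $M\to\Lambda$ and so gives nothing: it discards precisely the blow-up of $\psi$ near $\Lambda$. One must instead integrate $v=\psi(y)$ against the $\tfrac12$-Hölder decay rate of $y$ supplied by \eqref{lemcrazyc2} and use that the resulting integral is logarithmically divergent. This works only because of two matched critical exponents built into the substitution: the $\tfrac34<1$ keeping $\Lambda=W(1)$ finite, and the resulting growth $\psi(t)\sim 4(\Lambda-t)^{-2}$ whose exponent $2$ is exactly the one making $\int(M-t)(\Lambda-t)^{-2}\,dt$ diverge. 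The remaining steps — the change of variables, the ``one-sided interval inside $\overline\Omega$'' bookkeeping, and the one-dimensional embeddings — are routine.
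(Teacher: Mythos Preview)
Your proof is correct and follows essentially the same strategy as the paper: a substitution converts the weighted bound \eqref{lemcrazyc2} into an $H^1$-bound on an auxiliary function of $w=u^\delta_x/\Q{u^\delta_x}$, whose $C^{0,1/2}$-regularity is then played against \eqref{lemcrazyc1} to obtain a logarithmically divergent lower bound forcing $\sup|w|$ away from $1$. The paper uses the slightly simpler auxiliary function $g^\delta:=(1-w^2)^{1/4}$ (so that $1/(g^\delta)^2=\Q{u^\delta_x}$ is integrated directly via \eqref{lemcrazyc1}) in place of your $y=W(w)$, but since both choices satisfy $|\phi'(w)|\lesssim(1-w^2)^{-3/4}$ these are equivalent variants of exactly the critical-exponent mechanism you identify in your closing paragraph.
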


\begin{proof}
We introduce the functions
\begin{align}
Q^\delta(x) &:= \Q{u^\delta_x(x)} \cma \\
f^\delta(x) &:= \dfrac{u^\delta_x(x)}{Q^\delta(x)} \cma \label{prfcrazydeff} \\
g^\delta(x) &:= (1 - \abs{f^\delta(x)}^2)^\frac{1}{4} = Q^\delta(x)^{-\frac{1}{2}} \cma \label{prfcrazydefg}
\end{align}
where \(x \in \overline{\Omega}\). Note that by assumption \(u^\delta \in H^2(\Omega) \embedding C^{1, \beta}(\overline{\Omega})\) for \(0 \leq \beta \leq \frac{1}{2}\) such that we may assume \(f^\delta\) to be continuous on \(\overline{\Omega}\). We set
\begin{equation}
y_\delta := \max_{x \in \overline{\Omega}}{\abs{f^\delta(x)}} \in [0, 1) \label{prfcrazydefy}
\end{equation}
and claim that \(y_\delta\) is bounded by a constant strictly smaller than \(1\) independently of \(\delta > 0\). We calculate
\[
g^\delta_x(x) = \ddx Q^\delta(x)^{-\frac{1}{2}} = - \frac{1}{2} Q^\delta(x)^{-\frac{3}{2}} \cdot \dfrac{u^\delta_x(x)}{Q^\delta(x)} \cdot u^\delta_{xx}(x) 
= - \frac{1}{2} f^\delta(x) \frac{u^\delta_{xx}(x)}{Q^\delta(x)^{\frac{3}{2}}} \fst
\]
Hence, by \eqref{lemcrazyc2} and H\"{o}lder's inequality,
\[
\begin{split}
\intspace{\abs{g^\delta_x(x)}^2} &= \frac{1}{4} \intspace{\abs{f^\delta(x)}^2 \frac{\abs{u^\delta_{xx}(x)}^2}{Q^\delta(x)^3}} \\
& \leq \frac{1}{4} \norm{f^\delta}{L^\infty(\Omega)}^2 \intspace{\frac{\abs{u^\delta_{xx}(x)}^2}{Q^\delta(x)^3}} \leq \frac{1}{4} \, c_2 \fst
\end{split}
\]
Since moreover
\[
\norm{g^\delta}{L^\infty(\Omega)} = \norm{(Q^\delta)^{-\frac{1}{2}}}{L^\infty(\Omega)} \leq 1 \cma
\]
it follows that \(g^\delta \in H^1(\Omega) \hookrightarrow C^{0,\frac{1}{2}}(\overline{\Omega})\) and in particular
\begin{equation} \label{prfcrazygholder}
\norm{g^\delta}{C^{0,\frac{1}{2}}(\overline{\Omega})} \leq C \, \norm{g^\delta}{H^1(\Omega)} \leq C(c_2) =: K \cma
\end{equation}
where the constant \(K\) is independent of \(\delta > 0\). We now choose \(x_\delta \in \overline{\Omega}\) such that \(\abs{f^\delta(x_\delta)} = y_\delta\). Then, \eqref{prfcrazydefg} and \eqref{prfcrazygholder} imply
\[
\bigl\lvert g^\delta(x) - \bigl(1 - y_\delta^2\bigr)^\frac{1}{4} \bigr\rvert \leq K \, \abs{x - x_\delta}^\frac{1}{2}
\]
for all \(x \in \overline{\Omega}\). Since \(g^\delta(x) \geq \bigl(1 - y_\delta^2\bigr)^\frac{1}{4} > 0\) by \eqref{prfcrazydefg} and \eqref{prfcrazydefy}, we obtain
\[
g^\delta(x) \leq K \, \abs{x - x_\delta}^\frac{1}{2} + \bigl(1 - y_\delta^2\bigr)^\frac{1}{4}
\]
and may moreover estimate
\[
g^\delta(x)^2 \leq 2K^2 \, \abs{x - x_\delta} + 2 \, \bigl(1 - y_\delta^2\bigr)^\frac{1}{2}
\]
for all \(x \in \overline{\Omega}\). From this we deduce that
\[
\frac{1}{2} \intspace{\dfrac{1}{K^2 \, \abs{x - x_\delta} + \bigl(1 - y_\delta^2\bigr)^\frac{1}{2}}} \leq \intspace{\dfrac{1}{g^\delta(x)^2}} \fst
\]
Having in mind that we have chosen \(\Omega = (-l, l)\) without loss of generality, we may estimate the left-hand side from below by
\[
\frac{1}{2} \intspace{\dfrac{1}{K^2 \, \abs{x - l} + \bigl(1 - y_\delta^2\bigr)^\frac{1}{2}}} \leq
\frac{1}{2} \intspace{\dfrac{1}{K^2 \, \abs{x - x_\delta} + \bigl(1 - y_\delta^2\bigr)^\frac{1}{2}}} \cma
\]
whereas we get for the right-hand side by \eqref{lemcrazyc1}
\[
\intspace{\dfrac{1}{g^\delta(x)^2}} = \intspace{Q^\delta(x)} \leq c_1 \fst
\]
Thus, we finally arrive at
\begin{equation} \label{prfcrazyint}
\frac{1}{2} \intspace{\dfrac{1}{K^2 \, \abs{x - l} + \bigl(1 - y_\delta^2\bigr)^\frac{1}{2}}} \leq c_1 \fst
\end{equation}
Now, we observe that the left-hand side of \eqref{prfcrazyint} depends continuously on \(y_\delta\) and tends to \(+\infty\) if we let \(y_\delta \to 1\). Hence, we immediately infer the existence of a constant \(0 < M = M(c_1, c_2) < 1\) such that
\[
\abs{f^\delta(x)} \leq y_\delta \leq M
\]
for all \(x \in \overline{\Omega}\) and independently of \(\delta > 0\). By the definition of \(f^\delta\) in \eqref{prfcrazydeff}, this implies
\[
\abs{u^\delta_x(x)} \leq M \cdot \Q{u^\delta_x(x)}
\]
and therefore
\[
\abs{u^\delta_x(x)} \leq \dfrac{M}{\sqrt{1-M^2}} < \infty \fst
\]
This together with assumption \eqref{lemcrazyc0} and Poincar\'e's inequality proves
\[
\norm{u^\delta}{W^{1,\infty}(\Omega)} \leq C(c_0, c_1, c_2)
\]
independently of \(\delta > 0\). As a consequence,
\[
\begin{split}
\norm{u^\delta_{xx}}{L^2(\Omega)}^2 &= \intspace{Q^\delta(x)^3 \dfrac{\abs{u^\delta_{xx}(x)}^2}{Q^\delta(x)^3}} \\
&\leq \norm{(Q^\delta)^3}{L^\infty(\Omega)} \intspace{\dfrac{\abs{u^\delta_{xx}}^2}{Q^\delta(x)^3}} \\
&\leq C(c_0, c_1, c_2) 
\end{split}
\]
and the proof is complete. \qedhere
\end{proof}

\begin{rem} \label{remarkcrazy1} 
Note that the conditions \eqref{lemcrazyc0} and \eqref{lemcrazyc1} in Lemma \ref{lemcrazy} are immediately satisfied for the functions \(\sequence{u^\delta(t)}{\delta > 0} \subset H^2(\Omega)\) for almost all \(t \in (0,T)\) by conservation of mass and the energy estimate \eqref{chap4energydelta}, whereas condition \eqref{lemcrazyc2} will be deduced from the entropy estimate \eqref{chap4entropydelta} and Lemma \ref{lemunibound}. Of course, observing that
\[
\intspace{\dfrac{\abs{u^\delta_{xx}(t)}^2}{\Q{u^\delta_x(t)}^3}} \leq \varphi(t) , \quad \norm{\varphi}{\bochnerex{L^1}{I}{\IR}} \leq C(u_0) < \infty
\]
by the entropy estimate \eqref{chap4entropydelta}, one might hope that the proof of Lemma \ref{lemcrazy} can be adopted to obtain an uniform \(\bochner{L^p}{H^2}\)-bound on \(\sequence{u^\delta}{\delta > 0}\) for some \(1 < p \leq \infty\): Quantifying the constant \(K\) in the preceding proof in terms of \(\varphi(t)\), one is able calculate the time-dependent constant \(M(\varphi(t))\) from \eqref{prfcrazyint} such that \(y_\delta(t) \leq M(\varphi(t)) < 1\) holds for almost all \(t \in (0,T)\). However, by this procedure we eventually end up with
\[
\norm{u^\delta_x(t)}{L^\infty(\Omega)} \leq \dfrac{M(\varphi(t))}{\sqrt{1-M(\varphi(t))^2}} \approx C \cdot \exp(\varphi(t)^2) \cdot \varphi(t)^{-2} \cma
\]
such that not necessarily \(\norm{u^\delta_x(t)}{L^\infty(\Omega)} \in \bochnerex{L^1}{I}{\IR}\). Even if we proceed more rigorously and exploit all the estimates in the proof of Lemma \ref{lemcrazy} more carefully, we arrive at transcendent equations for \(M(\varphi(t))\) which nevertheless suggest exponential growth of the bound on \(\norm{u^\delta_x(t)}{L^\infty(\Omega)}\).  \qedrem
\end{rem}

\begin{rem} \label{remarkcrazy2}
In order to improve the regularity results of Theorem \ref{theorem1}, one were willing to trade regularity in space for regularity in time. In particular, a uniform bound in the space \(\bochner{L^p}{W^{1,q}}\), \(1 < p, q < \infty\) would be sufficient to be able to prove Theorem \ref{theorem1} by means of monotone operators (cf. Remark \ref{remarkmon}). However, the strategy of the proof of Lemma \ref{lemcrazy} can yield an \(W^{1,\infty}(\Omega)\)-bound on \(\sequence{u^\delta}{\delta > 0}\) only. Moreover, the author did not succeed in applying interpolation arguments. \qedrem
\end{rem}

%

Finally, in view of the preceding results we will need to identify the limit of a sequence of functions \(\sequence{f_k}{k \in \IN} \subset \bochnerex{L^p}{\openI}{X}\) that converges weakly in \(\bochnerex{L^p}{\openI}{X}\) as well as weakly in \(X\) for almost all \(t \in (0,T)\). The following lemma will be helpful.

\begin{lem} \label{lemidentification}
Let \(X\) be a Banach space and \(\sequence{f_k}{k \in \IN} \subset \bochnerex{L^p}{\openI}{X}\), \(1 < p < \infty\), a sequence such that
\begin{alignat}{3}
f_k &\weakto f &&\weaktext &&\bochnerex{L^p}{\openI}{X} \label{lemidentconvf} \\
\intertext{and}
f_k(t) &\weakto g(t) &&\weaktext &&X \label{lemidentconvg}
\end{alignat}
for almost all \(t \in (0,T)\) as \(k \to \infty\). Then we may identify \(f(t) = g(t)\) for almost all \(t \in (0,T)\). \qedthm
\end{lem}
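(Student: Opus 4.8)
The plan is to reduce this vector-valued statement to a family of scalar statements by pairing with functionals $x' \in X'$, then to use Vitali's convergence theorem to reconcile the two modes of convergence, and finally to reassemble the scalar identities into the desired pointwise identity in $X$.

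First I would fix $x' \in X'$ and pass to the scalar functions $h_k(t) := \pairex{x'}{f_k(t)}{X}$. Since $\sequence{f_k}{k \in \IN}$ converges weakly in $\bochnerex{L^p}{\openI}{X}$ it is bounded there, so $\norm{h_k}{\bochnerex{L^p}{\openI}{\IR}} \leq \norm{x'}{X'}\,\norm{f_k}{\bochnerex{L^p}{\openI}{X}} \leq C$ uniformly in $k$; because $1 < p < \infty$ and $\openI = (0,T)$ has finite measure, H\"{o}lder's inequality then makes $\sequence{h_k}{k \in \IN}$ uniformly integrable. By \eqref{lemidentconvg} we have $h_k(t) \to \pairex{x'}{g(t)}{X}$ for almost all $t \in (0,T)$, so Vitali's convergence theorem gives $h_k \to \pairex{x'}{g(\cdot)}{X}$ in $\bochnerex{L^1}{\openI}{\IR}$; in particular the limit lies in $L^1(\openI)$.

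Next I would test \eqref{lemidentconvf} against the bounded linear functional $w \mapsto \inttime{\phi(t)\,\pairex{x'}{w(t)}{X}}$ on $\bochnerex{L^p}{\openI}{X}$, where $\phi \in L^q(\openI)$ and $\frac1p + \frac1q = 1$; this yields $\inttime{\phi\,h_k} \to \inttime{\phi(t)\,\pairex{x'}{f(t)}{X}}$. Taking in particular $\phi \in L^\infty(\openI) \subseteq L^q(\openI)$ and comparing with the $L^1$-convergence of the previous step, I would obtain $\inttime{\phi(t)\bigl(\pairex{x'}{f(t)}{X} - \pairex{x'}{g(t)}{X}\bigr)} = 0$ for all $\phi \in L^\infty(\openI)$; since the bracketed function belongs to $L^1(\openI)$, the fundamental lemma of the calculus of variations forces $\pairex{x'}{f(t)}{X} = \pairex{x'}{g(t)}{X}$ for almost all $t$, with an exceptional null set that so far depends on $x'$.

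To remove this dependence I would observe that each $f_k$, being Bochner measurable, is almost everywhere separably valued, so the closed linear span $X_0 \subseteq X$ of the essential ranges of all the $f_k$ is a separable closed --- hence weakly closed, being convex --- subspace; then $g(t) \in X_0$ for almost all $t$ as an a.e.\ weak limit of the $f_k(t)$, and $f \in \bochnerex{L^p}{\openI}{X_0}$ because the latter is a weakly closed subspace of $\bochnerex{L^p}{\openI}{X}$. Choosing a countable norming family $\sequence{x'_n}{n \in \IN} \subset X'$ for $X_0$ and intersecting the conull sets obtained above for each $x'_n$, I would arrive at a set of full measure on which $\pairex{x'_n}{f(t)-g(t)}{X}=0$ for every $n$, whence $f(t)=g(t)$ there. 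The main obstacle is the interchange of the pointwise weak limit \eqref{lemidentconvg} with the weak $\bochnerex{L^p}{\openI}{X}$-limit \eqref{lemidentconvf}: this is precisely what the scalar reduction together with Vitali's theorem (fed by the uniform $L^p$-bound on a finite interval) is designed to overcome, while the secondary difficulty of upgrading ``for each $x'$, a.e.\ $t$'' to ``a.e.\ $t$, for all $x'$'' is dealt with by the separable-subspace argument that turns an uncountable family of null sets into a countable one.
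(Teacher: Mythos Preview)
Your argument is correct and follows essentially the same route as the paper: fix \(x' \in X'\), reduce to scalar functions, and use Vitali's theorem (fed by the uniform \(L^p\)-bound on the finite interval) to interchange the pointwise weak limit with integration, then compare with the weak \(L^p(\openI;X)\)-limit. The paper tests with \(\varphi \in C^0([0,T])\) where you take \(\phi \in L^\infty(\openI)\), but this is inessential.

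The one substantive difference is that the paper simply ends with ``the arbitrariness of \(x' \in X'\) completes the proof,'' leaving implicit the passage from ``for each \(x'\), a.e.\ \(t\)'' to ``for a.e.\ \(t\), every \(x'\).'' Your separable-subspace reduction and countable norming family make this step rigorous, which is a genuine (if minor) improvement over the paper's presentation; in the paper's applications \(X = L^2(\Omega)\) is separable anyway, so the issue does not bite there, but your version works for arbitrary \(X\) as stated.
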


\begin{proof}
Let \(\functional \in X'\) and \(\varphi \in C^0([0,T])\). Then we have \(\varphi \, \functional \in \bochnerex{L^q}{\openI}{X'} \cong \bochnerex{L^p}{\openI}{X}'\), \(\frac{1}{p} + \frac{1}{q} = 1\),
and
\[
\inttime{\pairex{\varphi(t) \, \functional}{f_k(t)}{X}} \xrightarrow{\delta \limeps 0} \inttime{\pairex{\varphi(t) \, \functional}{f(t)}{X}}
\]
by \eqref{lemidentconvf}. On the other hand, since
\[
\pairex{\varphi(t) \, \functional}{f_k(t)}{X} \xrightarrow{\delta \limeps 0} \pairex{\varphi(t) \, \functional}{g(t)}{X} 
\]
for almost all \(t \in (0,T)\) by \eqref{lemidentconvg} and
\[
\bigl\lvert \pairex{\varphi(t) \, \functional}{f_k(t)}{X} \bigr\rvert \leq \norm{\varphi}{C^0([0,T])} \norm{\functional}{X'} \norm{f_k(t)}{X} \cma
\]
where \(\norm{f_k}{X}\) is uniformly bounded in \(\bochnerex{L^p}{\openI}{\IR}\) due to \eqref{lemidentconvf}, we may apply Vitali's convergence theorem to obtain
\[
\inttime{\pairex{\varphi(t) \, \functional}{f_k(t)}{X}} \xrightarrow{\delta \limeps 0} \inttime{\pairex{\varphi(t) \, \functional}{g(t)}{X}} \fst
\]
Hence, we conclude
\[
\inttime{\varphi(t) \, \pairex{\functional}{f(t)}{X}} = \inttime{\varphi(t) \, \pairex{\functional}{g(t)}{X}} \fst
\]
Since \(\varphi \in C^0([0,T])\) was arbitrary, it follows that
\[
\pairex{\functional}{f(t)}{X} = \pairex{\functional}{g(t)}{X}
\]
for almost all \(t \in (0,T)\), and the arbitrariness of \(\functional \in X'\) completes the proof.
\end{proof}

\subsection{Proof of Theorem \ref{theorem1}}

Using these results, we are now ready to prove Theorem \ref{theorem1}. Our method is somewhat similar to the approach appearing in the proof of \cite[Theorem 8.2]{gruen2}, even though there are several substantial differences in comparison to our case.

\begin{proof}[Proof of Theorem \ref{theorem1}]
Let \((u^\delta, p^\delta)\) be a solution in the sense of Proposition \ref{proposition3}. In particular, the functions \(u^\delta\) and \(p^\delta\) satisfy the equations \eqref{chap4sysdelta} and the estimates \eqref{chap4energydelta} and \eqref{chap4entropydelta}. Furthermore, note that \(u^\delta\) is continuous and nonnegative on \(\overline{\Omega}_T\).

\beginstep{(1)}{A priori estimates.} As before, conservation of mass is easy to prove employing Proposition \ref{propevtrip}, that is,
\begin{equation} \label{prf4consmass}
\intspace{u^\delta(t)} = \intspace{u_0}
\end{equation}
holds for \(t \in [0,T]\). Hence, we deduce from the energy estimate \eqref{chap4energydelta} and Poincar\'e's inequality that
\begin{equation}
\norm{u^\delta}{\bochner{L^\infty}{W^{1,1}}} \leq C(u_0) \fst \label{prf4boundu}
\end{equation}
Moreover, using \eqref{chap4energydelta} with \(r = 2\), we argue as usual to show that
\begin{equation}
\norm{u^\delta_t}{\timespace} \leq C(u_0,m) \fst \label{prf4boundut}
\end{equation}
Using estimate \eqref{chap4entropydelta}, calculation as in the proof of Proposition \ref{proposition3} (cf. \eqref{prf3estimatep}) gives
\begin{equation}
\norm{p^\delta}{L^2(\Omega_T)} \leq C(u_0) \label{prf4boundp} \fst
\end{equation}
Furthermore, we define the flux
\begin{equation} \label{prf4defJ}
J^\delta = 
\begin{cases}
m(u^\delta) \, p^\delta_x & \text{ on } \possettime{u^\delta} \cma \\
0 & \text{ on } \zeroset{u^\delta}_T \fst
\end{cases}
\end{equation}
By the energy estimate \eqref{chap4energydelta} with \(r = 2\) it holds that
\begin{equation}
\norm{J^\delta}{L^2(\Omega_T)}^2 = \intspacetimeex{\possettime{u^\delta}}{m(u^\delta)^2 \, \abs{p^\delta_x}^2}\leq C(u_0) \label{prf4boundJ} 
\end{equation}

\beginstep{(2)}{The limit \(\delta \to 0\).} From \eqref{prf4boundu}, \eqref{prf4boundut}, \eqref{prf4boundp}, \eqref{prf4boundJ} and Simon's compactness criterion \ref{thmsimon} (note in particular that \(W^{1,1}(\Omega) \cptembedding L^q(\Omega) \embedding H^1(\Omega)'\) holds true for all \(1 \leq q < \infty\) in one space dimension), we deduce the existence of a subsequence, also denoted by \(\sequence{u^\delta}{\delta > 0}\), \(\sequence{p^\delta}{\delta > 0}\) and \(\sequence{J^\delta}{\delta > 0}\), respectively, such that
\begin{alignat}{3}
u^\delta &\weakstarto u &&\weakstartext &&\bochner{L^\infty}{BV} \cma \\
u^\delta &\strongto u &&\strongtext &&\bochnerex{C}{[0,T]}{L^q(\Omega)}, \quad 1 \leq q < \infty \cma \label{prf4strconvusim} \\
u^\delta_t &\weakto u_t &&\weaktext &&\timespace \cma \\
p^\delta &\weakto p &&\weaktext &&\bochner{L^2}{L^2} \cma \\
J^\delta &\weakto J &&\weaktext &&\bochner{L^2}{L^2}
\end{alignat}
as \(\delta \limeps 0\). This proves the regularity results \eqref{thm1regu}--\eqref{thm1regp}. Moreover, since \(u^\delta(0) = u_0\) holds uniformly for \(\delta > 0\) by Proposition \ref{proposition3}, the initial condition \eqref{thm1initial} follows immediately from \eqref{prf4strconvusim}. Similarly, conservation of mass \eqref{thm1consmass} follows from \eqref{prf4consmass} and \eqref{prf4strconvusim} once again. Employing the relation \eqref{prf4defJ} and letting \(\delta \limeps 0\) in \eqref{chap4sysdeltaa}, we obtain
\[
\inttime{\pairh{u_t}{v}} + \intspacetime{J \cdot v_x} = 0
\]
for all \(v \in \bochner{L^2}{H^1}\). The flux \(J\) as well as the pressure \(p\) will be identified below.

\beginstep{(3)}{Identification of \(p\).} From the estimates \eqref{chap4energydelta} and \eqref{chap4entropydelta} together with Lemma \ref{lemunibound} we infer the existence of a further subsequence, again denoted by \(\sequence{u^\delta}{\delta > 0}\), and a set \(S\) satisfying \(\mu(S) = T\) such that
\begin{align}
\intspace{\Q{u^\delta_x(t)}} &\leq C(u_0) \cma \label{prf4bounduxt} \\
\intspace{\dfrac{\abs{u^\delta_{xx}(t)}^2}{\Q{u^\delta_x(t)}^3} + \delta \, \abs{u^\delta_{xx}(t)}^2} &\leq C(u_0, t) \cma  \label{prf4bounduxxt} \\
\intspaceex{\posset{u^\delta(t)}}{m(u^\delta(t))^r \, \abs{p^\delta_x(t)}^2} &\leq C(u_0, r, t) \label{prf4boundtildeJt}
\end{align}
for all \(t \in S\), \(r > 1\), independently of \(\delta > 0\). Now, it follows from \eqref{prf4consmass}, \eqref{prf4bounduxt}, \eqref{prf4bounduxxt} and Lemma \ref{lemcrazy} that
\begin{equation}
\norm{u^\delta(t)}{H^2(\Omega)} \leq C(u_0, t) \label{prf4boundu2t}
\end{equation}
for \(t \in S\) independently of \(\delta > 0\). Indeed, since \eqref{prf4strconvusim} in particular implies
\begin{alignat*}{3}
u^\delta(t) \strongto u(t) &&\strongtext &&L^2(\Omega) 
\end{alignat*}
for every \(t \in S\), we deduce from \eqref{prf4boundu2t} that the whole subsequence weakly converges in \(H^2(\Omega)\), that is
\begin{alignat}{3}
u^\delta(t) &\weakto u(t) && \weaktext &&H^2(\Omega) \label{prf4weakconvu2t}
\end{alignat}
holds for \(t \in S\) as \(\delta \limeps 0\). Since the embedding \(H^2(\Omega) \cptembedding C^{1, \beta}(\overline{\Omega})\) is compact for \(\beta < \frac{1}{2}\), it follows from \eqref{prf4boundu2t} and \eqref{prf4weakconvu2t} once more for \(t \in S\) and the whole subsequence that (cf. \cite[Lemma 8.2]{alt})
\begin{alignat*}{3}
u^\delta(t) &\strongto u(t) &&\strongtext &&C^{1, \beta}(\overline{\Omega}) \fst
\end{alignat*}
This in particular implies that
\begin{alignat}{2}
u^\delta(t, .) &\strongto u(t, .) &&\text{ uniformly on }\overline{\Omega} \cma \label{prf4uniconvu} \\
u^\delta_x(t, .) &\strongto u_x(t, .) &&\text{ uniformly on }\overline{\Omega} \label{prf4uniconvux}
\end{alignat}
for \(t \in S\). Now, \eqref{thm1regu2} follows obviously from \eqref{prf4weakconvu2t}. Furthermore, from the uniform convergence \eqref{prf4uniconvu} and the fact that \(u^\delta(t) \geq 0\) for \(t \in S\) and \(\delta > 0\) by Proposition \ref{proposition3pos} (i), we deduce that indeed \(u(t) \geq 0\) holds, which is \eqref{thm1nonneg}. Finally, since \(u^\delta(t) \in H^1_0(\Omega)\) for almost all \(t \in (0,T)\), the boundary regularity \eqref{thm1boundary} follows from \eqref{prf4weakconvu2t} after possibly restricting \(S\) by a set of measure zero.

To identify the pressure \(p\), we argue as usual and use the relation
 \[
- \ddx \Qex{u^\delta_x(t)} = - \frac{u^\delta_{xx}(t)}{\Q{u^\delta_x(t)}^3}
 \]
and the fact
\[
\frac{1}{\Q{u^\delta_x(t)}^3} \leq 1
\]
to infer from the pointwise convergence of \(u^\delta_x(t)\) \eqref{prf4uniconvux}, the weak convergence of \(u^\delta_{xx}(t)\) \eqref{prf4weakconvu2t} and Vitali's convergence theorem that
\begin{alignat*}{3}
- \ddx \Qex{u^\delta_x(t)} &\weakto - \ddx \Qex{u_x(t)} &&\weaktext &&L^2(\Omega) \fst
\end{alignat*}
Moreover, by \eqref{prf4bounduxxt}, we have for arbitrary \(v \in L^2(\Omega)\)
\[
\delta \intspace{u^\delta_{xx}(t) \cdot v} \leq \delta \, \norm{u^\delta_{xx}(t)}{L^2(\Omega)} \norm{v}{L^2(\Omega)} \xrightarrow{\delta \limeps 0} 0
\]
for \(t \in S\). Hence,
\begin{equation} \label{prf4weakconvpt}
p^\delta(t) \weakto - \ddx \Qex{u_x(t)} \weaktext L^2(\Omega)
\end{equation}
for all \(t \in S\) as \(\delta \limeps 0\). This together with Lemma \ref{lemidentification} proves \eqref{thm1eqb}.


We now fix \(t \in S\) and intend to show that
\begin{equation} \label{prf4weakconvpxtloc}
p^\delta_x(t) \weakto p_x(t) \weaktext L^2(U) \qquad \forall \, U \subset \subset \posset{u(t)}
\end{equation}
by similar arguments as in the proof of Proposition \ref{proposition3}. First of all, note that the function \(u(t,.) : \overline{\Omega} \to \IR\) is continuous and nonnegative on \(\overline{\Omega}\) by the uniform convergence \eqref{prf4uniconvu}. We choose an arbitrary \(\possubset \subset \subset \posset{u(t)}\) and denote
\[
\posparam := \min_{x \in \overline{\possubset}} u(t,x) \fst
\]
Owing to \eqref{prf4uniconvu}, we have \(u^\delta(t, .) \geq \frac{\posparam}{2}\) on \(\possubset\) for \(\delta > 0\) sufficiently small, and in particular \(\possubset \subset \posset{u^\delta(t)}\). Therefore, from the energy estimate \eqref{prf4boundtildeJt} with \(r = 2\) we deduce
\[
c_{\mobextra}^2 \left( \frac{\posparam}{2} \right)^{2n} \intspaceex{\possubset}{\abs{p^\delta_x(t)}^2} \leq \intspaceex{\posset{u^\delta(t)}}{m(u^\delta)^2 \, \abs{p^\delta_x(t)}^2} \leq C(u_0, t)
\]
for \(\delta > 0\) sufficiently small. This implies
\[
\norm{p^\delta_x(t)}{L^2(\possubset)} \leq C(u_0, c_{\mobextra}, t, \possubset) \fst
\]
For the weak limit \(q \in L^2(\possubset)\) of a subsequence of \(\sequence{p^\delta_x(t)}{\delta > 0}\), we infer from the weak convergence of \(p^\delta(t)\) \eqref{prf4weakconvpt} that for arbitrary \(\varphi \in C^\infty_0(\possubset)\) the relation
\[
\intspaceex{\possubset}{q \cdot \varphi} = - \intspaceex{\possubset}{p(t) \cdot \varphi_x}
\]
is satisfied. Hence, we may identify \(q = p_x(t)\) and conclude that the whole sequence converges for \(t \in S\). The arbitrariness of \(\possubset\) proves \eqref{prf4weakconvpxtloc} and in turn \eqref{thm1regpx}.

\beginstep{(4)}{Identification of \(J\).} It remains to identify \(J\). To this end, we shall prove that
\begin{equation} \label{prf4convmint}
\intspace{J^\delta(t) \cdot w} = \intspaceex{\posset{u^\delta(t)}}{m(u^\delta(t)) \, p^\delta_x(t) \cdot w} \xrightarrow{\delta \limeps 0} \intspaceex{\posset{u(t)}}{m(u(t)) \, p_x(t) \cdot w}
\end{equation}
for \(w \in L^2(\Omega)\) and \(t \in S\). For arbitrary \(\rho > 0\) we split
\[
\begin{split}
\intspaceex{\posset{u^\delta(t)}}{m(u^\delta(t)) \, p^\delta_x(t) \cdot \varphi}
&= \intspaceex{\substack{\setex{u(t) > \rho} \\ \cap \, \setex{u^\delta(t) > 0}}}{m(u^\delta(t)) \, p^\delta_x(t) \cdot \varphi} \\
& \quad + \intspaceex{\substack{\setex{u(t) \leq \rho} \\ \cap \, \setex{u^\delta(t) > 0}}}{m(u^\delta(t)) \, p^\delta_x(t) \cdot \varphi} \\
& =: \mathcal{I}_1 + \mathcal{I}_2 \fst
\end{split}
\]
Regarding \(\mathcal{I}_1\), note that \(\setex{u(t) > \rho} \cap \setex{u^\delta(t) > 0} = \setex{u(t) > \rho}\) for \(\delta >0\) sufficiently small due to the uniform convergence of \(u^\delta\) \eqref{prf4uniconvu}. As before, we deduce from \eqref{prf4uniconvu} and \eqref{prf4weakconvpxtloc} that
\[
\mathcal{I}_1 \xrightarrow{\delta \limeps 0} \intspaceex{\possetex{u(t)}{\rho}}{m(u(t)) \, p_x(t) \cdot w} \fst
\]
On the other hand, writing \(E_\rho := \setex{u(t) \leq \rho} \cap \setex{u^\delta(t) > 0}\), we infer from \eqref{prf4boundtildeJt} with \(r = \frac{3}{2}\) (cf. Remark \ref{remarkenergyr} at this point) that
\[
\begin{split}
\abs{\mathcal{I}_2} & \leq \absB{\intspaceex{E_\rho}{m(u^\delta(t)) \, p^\delta_x(t) \cdot w}} \\
& \leq \intspaceex{E_\rho}{\abs{m(u^\delta(t))^\frac{1}{4} \, w \cdot m(u^\delta(t))^\frac{3}{4} \, p^\delta_x(t)}} \\
& \leq \Bigl( \intspaceex{E_\rho}{m(u^\delta(t))^\frac{1}{2} \, w^2} \Bigr)^\frac{1}{2} \Bigl( \intspaceex{E_\rho}{m(u^\delta(t))^\frac{3}{2} \, \abs{p^\delta_x(t)}^2} \Bigr)^\frac{1}{2} \\
& \leq \rho^\frac{n}{4} \cdot C(m_0) \, \norml{w} \, \intspaceex{\setex{u^\delta(t) > 0}}{m(u^\delta(t))^\frac{3}{2} \, \abs{p^\delta_x(t)}^2} \Bigr)^\frac{1}{2} \\
& \leq C(u_0, m_0, t, w) \, \rho^\frac{n}{4} \fst
\end{split}
\]
for any \(w \in L^2(\Omega)\) and independently of \(\delta > 0\). Thus, the arbitrariness of \(\rho > 0\) proves \eqref{prf4convmint}. In particular, using Lemma \ref{lemidentification} once more, we finally infer that
\[
J(t) =
\begin{cases}
m(u(t)) \, p_x(t) & \text{ on \(\posset{u(t)}\), }\\
0 & \text{ on \(\zeroset{u(t)}\). }\\
\end{cases}
\]
for almost all \(t \in (0,T)\). This completes the proof of Theorem \ref{theorem1}.
\end{proof}

\subsection{Proof of Theorem \ref{theorem2}}

In order to prove Theorem \ref{theorem2} (i), we will basically repeat the arguments from the proof of Proposition \ref{proposition3pos} (ii), using the uniform convergence of the functions \(u^\delta(t, .)\) for \(t \in S\) and estimate \eqref{chap4boundGdelta}. Regarding Theorem \ref{theorem2} (ii), we adopt once more the arguments of Bernis and Friedman (cf. \cite[Corollary 4.5]{berfri}), who have proved the very same result in the setting of the standard thin-film equation.

\begin{proof}[Proof of Theorem \ref{theorem2}]
Let \(u\) be a solution in the sense of Theorem \ref{theorem1} and \(\sequence{u^\delta}{\delta > 0}\) be the sequence of functions that we have obtained in the course of the previous proof. In particular, we may suppose that
\begin{alignat}{2}
u^\delta(t, .) &\strongto u(t, .) &&\text{ uniformly on }\overline{\Omega} \label{prf5uniconv}
\end{alignat}
as \(\delta \to 0\) for \(t \in S\), \(\mu(S) = T\), and \(\beta < \frac{1}{2}\), where the functions \(\sequence{u^\delta}{\delta > 0}\) are solutions in the sense of Proposition \ref{proposition3}

\beginstep{(1)}{ad (i).} Suppose that \(n \geq 2\). Then the conditions of Proposition \ref{proposition3pos} are satisfied such that \(u^\delta(t, .)\) is nonnegative on \(\overline{\Omega}\) and
\begin{equation}
\intspace{G(u^\delta(t,x))} \leq C(u_0) \label{prf5boundG}
\end{equation}
holds for all \(t \in S\) and independently of \(\delta > 0\). As in the proof of Proposition \ref{proposition3pos}, we argue by contradiction and assume that there exists \(t_0 \in S\) such that \(\mu(E) > 0\), where \(E := \zeroset{u(t_0)} \subseteq \overline{\Omega}\).
Let \(\rho > 0\). Since \(u^\delta(t_0, .) \to u(t_0, .)\) uniformly on \(\overline{\Omega}\) by \eqref{prf5uniconv},
we have
\[
u^\delta(t_0, x) < \rho
\]
for \(x \in E\) and \(\delta > 0\) sufficiently small.
Moreover, it follows immediately from the monotonicity of the functions \(G_\epsilon(.)\) for \(\epsilon > 0\) (see \eqref{chap3gmon}) that the limit \(G(s) = \lim_{\epsilon \to 0} G_\epsilon(s)\) is monotonically decreasing for \(0 < s \leq a\). Hence, remembering that \(u^\delta(t_0,x) > 0\) for almost all \(x \in \Omega\) if \(n \geq 2\) and using the the growth properties of \(G\) derived in \eqref{chap3growthG}, we infer that
\[
G(u^\delta(t_0, x)) \geq G(\rho) \geq
\begin{cases}
c \, \log \frac{1}{\rho} &\text{ if } n = 2 \cma \\
c \, \rho^{2-n} &\text{ if } n > 2 \cma
\end{cases}
\]
for almost all \(x \in E\), a positive constant \(c > 0\) and \(\rho > 0\) sufficiently small. Therefore,
\[
\liminf\limits_{\delta \limeps 0} \intspace{G(u^\delta(t_0, x))} \geq
\begin{cases}
c \, \log \frac{1}{\rho} \, \mu(E) &\text{ if } n = 2 \cma \\
c \, \rho^{2-n} \, \mu(E) &\text{ if } n > 2 \fst
\end{cases}
\]
Letting \(\rho \limeps 0\), we obtain a contradiction to \eqref{prf5boundG} and the assertion \(\mu \left( \zeroset{u(t_0)} \right) = 0\) follows.

\beginstep{(2)}{ad (ii).} Using the result above together with the uniform convergence of \(u^\delta(t, .)\) for \(t \in S\), we may argue as in the proof of Proposition \ref{proposition3pos} and employ Fatou's lemma to show that
\begin{equation}
\intspace{G(u(t, x))} \leq C(u_0) \fst \label{prf5boundG2}
\end{equation}
Now, assume that \(u(t_0, x_0) = 0\) for any \((t_0, x_0) \in S \times \overline{\Omega}\). Since
\begin{align}
u(t_0, .) &\in H^2(\Omega) \embedding C^{1, \frac{1}{2}}(\overline{\Omega}) \cma \label{prf5holder} \\
u(t_0, .) &\geq 0 \text{ on } \overline{\Omega} \notag
\end{align}
hold, we infer at first for \(x_0 \in \mathring{\Omega}\) that
\begin{equation} \label{prf5uux0}
u(t_0, x_0) = u_x(t_0, x_0) = 0 \fst
\end{equation}
If \(x_0 \in \partial \Omega\), \eqref{prf5uux0} follows from the boundary regularity \(u_x(t_0, .) = 0\) on \(\partial \Omega\) (cf. \eqref{thm1boundary}). Hence, we deduce from \eqref{prf5holder} and \eqref{prf5uux0} that
\[
u(t_0, x) \leq C(t_0) \, \abs{x - x_0}^{\frac{3}{2}}
\]
for arbitrary \(x \in \overline{\Omega}\). This leads to
\[
\intspace{\abs{x-x_0}^{\frac{3(2-n)}{2}}} \leq C(t_0) \intspace{u(t_0, x)^{2-n}} \cma
\]
where the left-hand side is equal to \(+ \infty\) if \(\frac{3(2-n)}{2} \leq -1\), that is if \(n \geq \frac{8}{3}\). On the other hand, in view of the growth of \(G\) (cf. again \eqref{chap3growthG}) and \eqref{prf5boundG2}, the right-hand side is bounded by
\[
\intspace{u(t_0, x)^{2-n}} \leq C \intspace{G(u(t_0, x))} \leq C(u_0) \fst
\]
Hence, we obtain a contradiction and the proof of Theorem \ref{theorem2} is complete.
\end{proof}

\clearsection

\section{Perspectives} \label{secperspectives}

\subsection{Uniqueness}

Regarding standard thin-film equations, there is no general answer to the question of uniqueness so far. Bernis and Friedman \cite{berfri} proved that solutions to the one-dimensional thin-film equation are unique, provided that the initial data are positive and \(n \geq 4\) holds for the growth exponent \(n\) of the mobility \(m(.)\) at zero. The proof relies on the continuity of \(u\) and the fact that initially positive solutions stay positive on \(\overline{\Omega}_T\) if \(n \geq 4\), which has also been proved in \cite{berfri}. In this context, we refer to Beretta, Bertsch and Dal Passo \cite{berberdal} for an example of non-uniqueness in one space dimension.

As we end up with significantly less regularity results for \(u\) in Theorem \ref{theorem1}, we can not hope to prove uniqueness in our case by the arguments appearing in \cite{berfri}. In particular, the solution \(u\) is not necessarily continuous and positive on \(\overline{\Omega}_T\) for arbitrary \(n \geq 1\).


\subsection{Additional Singular Terms}

Following the derivation of Oron, Davis and Bankoff \cite{oron}, Gr\"{u}n and Rumpf \cite{gruenrumpf} and Gr\"{u}n \cite{gruen2} considered thin-film flows of the form
\[
u_t + \divergence \left( m(u) \, \nabla (\triangle u - W'(u)) \right) = 0 \qquad \text{ in } (0,T) \times \Omega \subset \IR^{d+1} \cma
\]
where the generalized pressure \(p = - \triangle u + W'(u)\) includes a nonlinear term \(W'(u)\) that is supposed to model additional effects arising for instance from molecular interactions (e.g. van der Waals forces) or gravity. In typical applications, the function \(W'\) will be singular at zero.
One has in mind energy functions \(W\) of the form
\[
W(s) = - \tfrac{H_1}{1-\rho_1} s^{1-\rho_1} + \tfrac{H_2}{1-\rho_2} s^{1-\rho_2}
\]
for some positive constants \(H_1, H_2 > 0\) and real numbers \(\rho_1 > \rho_2 > 1\). For example, the classical 6--12 Lennard--Jones--Potential (which models attractive van der Waals forces at long ranges together with repulsive forces caused by Pauli repulsion at short ranges) entails \(\rho_1 = 9\) and \(\rho_2 = 3\). To be more general, we suppose that \(W\) can be decomposed into
\[
W(s) = W_+(s) + W_-(s)
\]
with a convex, nonnegative function \(W_+\) and a concave function \(W_-\), both satisfying certain growth conditions. For the precise assumptions on \(W'\) as well as further information on the physical background, we refer the reader to \cite{gruenrumpf} and references therein.

In our case, we would hence consider the generalized pressure
\begin{equation} \label{chap5singularp}
p = - \ddx \Qex{u_x} + W'(u)
\end{equation}
and formally derive the energy estimate
\begin{equation} \label{chap5singularenergy}
\begin{split}
\sup\limits_{t \in [0,T]} \intspace{\Q{u_x(t)}} + &\intspace{W(u(t))} + \intspacetime{ m(u) \, \abs{p_x}^2 } \\
&\leq \intspace{\Q{\smash[b]{u_{0,x}}}} + \intspace{W(u_0)} \cma
\end{split}
\end{equation}
where we may assume \(W\) to be nonnegative. Regarding the entropy estimate, one ends up with
\begin{equation} \label{chap5singularentropy}
\begin{split}
\sup\limits_{t \in [0,T]} \intspace{G(u(t))} + \intspacetime{\dfrac{\abs{u_{xx}}^2}{\Q{u_x}^3}} &+ \intspacetime{W''_+(u) \abs{u_x}^2}  \\
\leq \intspace{G(u_0)} &- \intspacetime{W''_-(u) \abs{u_x}^2} \fst
\end{split}
\end{equation}
In addition, under some reasonable assumptions on \(W\) we may suppose that the second term on the right-hand side is bounded by
\begin{equation} \label{chap5singularentropyr}
- \intspacetime{W''_-(u) \abs{u_x}^2} \leq C(W) \intspacetime{\abs{u_x}^2} \cma
\end{equation}

In the setting of the standard thin-film equation, it is now easy to estimate the right-hand side of \eqref{chap5singularentropyr} uniformly due to the boundedness of \(\nabla u\) provided by the energy estimate. In our case, we do not have any comparable uniform estimates at our disposal. Also, the author did not succeed in absorbing the second term on the right-hand side of \eqref{chap5singularentropy}. Since our overall strategy relies on a valid entropy estimate, we may not expect an existence result in the case of a generalized pressure having the form \eqref{chap5singularp} with a general, possibly singular function \(W'\).

However, observe that if we require the corresponding energy function \(W:\IR^+ \to \IR \) to be purely convex, the second term on the right-hand side of \eqref{chap5singularentropy} vanishes and we would indeed be able to prove an existence result in the spirit of Theorem \ref{theorem1}. In addition, note that the quality of our a priori estimates for \(u\) significantly improves on account of the entropy estimate \eqref{chap5singularentropy} for strictly convex energy functions \(W\), as we would then be able to estimate
\[
\norm{u_x}{L^2(\Omega_T)} \leq C(u_0) \fst
\]
Moreover, if \(W\) is singular at zero, the energy estimate \eqref{chap5singularenergy} yields additional nonnegativity and positivity results.

Let us mention that in order to proceed rigorously one would have to introduce a further regularization by -- for instance -- linearizing \(W'\) at zero in a first step. For the merely technical details of this procedure, we refer the reader to \cite{gruenrumpf} once again.

Finally, we want to draw the reader's attention to the thesis of Becker \cite{becker} once more, where the author considers the thin-film equation comprising the generalized pressure with the nonlinear surface tension term and a singular term \(W'\) as given by \eqref{chap5singularp}. In particular, Becker successfully proved an existence result for a discrete solution to this equation in multiple space dimensions.

\subsection{Higher Space Dimensions} \label{sechigher}

In the setting of the standard thin-film equation in higher space dimensions, which reads as
\[
u_t + \divergence \left( m(u) \, \nabla \triangle u \right) = 0
\qquad \text{ in } (0,T) \times \Omega \subset \IR^{d+1}
\]
in its most simplified form, existence of a weak solution has been proved by Gr\"{u}n \cite{gruen1} among others. In comparison to the results of Bernis and Friedman \cite{berfri} for the one-dimensional case, the regularity of the solution is subject to some essential limitations. In particular, one may not expect the solutions to be bounded and globally H\"{o}lder-continuous, as both results rely on one-dimensional Sobolev embeddings. As a consequence, existence of solutions in the case of general unbounded mobilities \(m\)
is in fact an open question up to now in the higher-dimensional setting.

Considering system \eqref{P1} in higher space dimensions, that is
\begin{equation} \label{chap5highersys}
\begin{aligned}
u_t &- \divergence \left( m(u) \, \nabla p \right) = 0 \\
p &= - \divergence \Qex{\nabla u} 
\end{aligned}
\qquad \text{ in } (0,T) \times \Omega \subset \IR^{d+1} \cma
\end{equation}
we shall not expect any better outcome and at first confine ourselves to bounded mobilities \(m\) having potential growth at zero. Moreover, an intermediate H\"{o}lder-continuity result as in Corollary \ref{corcont} is also not to be expected. However, since we have used Corollary \ref{corcont} as a handy tool on an approximative level only and were not able to conserve global continuity in the final result anyway, there might be a workaround.

More importantly, note that we took advantage of the identity
\[
p = - \ddx \Qex{u_x} = -\frac{u_{xx}}{\Q{u_x}^3}
\]
several times, which holds true in one space dimension. In higher space dimensions, this relation becomes far more complicated. One calculates
\begin{equation}
p = - \divergence \Qex{\nabla u} = - \frac{\triangle u + \abs{\nabla u}^2 \triangle u - \prodex{\nabla u}{D^2 u \, \nabla u}{\IR^d}}{\Q{\nabla u}^3} \label{chap5higherp} \fst
\end{equation}

Up to a certain level, our approach might nevertheless be applicable in multiple space dimensions. Regularizing equations \eqref{chap5highersys} in the fashion above by introducing the positive mobility
\[
m_\epsilon(s) := m(s) + \epsilon \qquad \forall \, s \in \IR
\]
and the coercive nonlinear operator \(A_\delta : H^1(\Omega) \to H^1(\Omega)'\) given by
\[
\pairh{A_\delta(u)}{v} := \intspace{\leftA \Qex{\nabla u} + \delta \nabla u \rightA \cdot \nabla v } \qquad \forall \, u, v \in H^1(\Omega) \cma
\]
the existence result for a discrete solution as in Lemma \ref{lemdisc} remains valid without any limitations, and we obtain the energy estimate
\[
\sup\limits_{t \in [0,T]} \intspace{\Q{\nabla u(t)}} + \frac{\delta}{2} \norml{\nabla u(t)}^2 + \intspacetime{ m_{\epsilon}(u) \, \abs{\nabla p}^2 } \leq C(\approxu) \fst
\]
Concerning the proof of Proposition \ref{proposition1}, one would avoid making use of relation \eqref{chap5higherp} and instead argue by the monotonicity of \(A_\delta\) to identify
\[
p = A_\delta(u) \text{ in } \bochnerex{L^2}{I}{H^1(\Omega)'}
\]
in the limit (cf. Remark \ref{remarkmon}). Eventually, supposing that \(\partial \Omega\) is sufficiently smooth, we may employ regularity results for second-order nonlinear elliptic equations afterwards to find that \(u \in \bochner{L^2}{H^2}\) (cf. Remark \ref{remarkreg}).

However, the prospects decrease when we try to derive the entropy estimate. This would require multiplying the right-hand side of \eqref{chap5higherp} by \(- \triangle u\), but it is not obvious whether we arrive at any reasonable estimate in terms of \(u\) in the end. Moreover, even if the derivation of an analogous estimate would be possible, there is no hope to prove a result in the spirit of Lemma \ref{lemcrazy}, the proof of which heavily relies on an one-dimensional embedding result with critical exponents. In other words, it is open how to derive any reasonable a priori estimate for \(\nabla u\) in order to identify the pressure \(p\) in the limit.

\clearsection

\newpage
\appendix

\section{Appendix}

\begin{thmapx}[Peano's existence theorem, {\cite[Theorem 3.B]{zeidler1}}] \label{thmpeano}
Let \(N \geq 1\), \(t_0 \in \IR\), \(\mathbf{y_0} \in \IR^N\) and \(\mathbf{f}:\IR \times \IR^N \to \IR^N\) be given. Consider the initial value problem
\begin{equation}\label{apx1ivp}
\begin{split}
\frac{d}{dt}\mathbf{x}(t) &= \mathbf{f}(t, \mathbf{x}(t)) \cma \\
\mathbf{x}(t_0) &= \mathbf{y_0} \fst
\end{split}
\end{equation}
For \(a, b > 0\) we set
\[
Q := \bigl\lbrace (t, \mathbf{y}) \in \IR \times \IR^N : \abs{t - t_0} \leq a, \norm{\mathbf{y} - \mathbf{y_0}}{} \leq b \bigr\rbrace \fst
\]
If \(f\) is continuous on \(Q\) and bounded by
\[
\norm{\mathbf{f}(t,\mathbf{y})}{} \leq K \quad \forall (t,\mathbf{y}) \in Q \cma
\]
then there exists a continuously differentiable solution to the initial value problem \eqref{apx1ivp} on the interval \([t_0 - \tau, t_0 + \tau]\), where
\[
\tau := \min \Bigl\lbrace a, \dfrac{b}{K} \Bigr\rbrace \fst
\]
\qedthm
\end{thmapx}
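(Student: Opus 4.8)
The plan is to prove Peano's theorem by the classical compactness method based on Cauchy--Euler polygonal approximations together with the Arzel\`a--Ascoli theorem; the crucial feature is that continuity of $\mathbf{f}$ alone does not permit Picard iteration, so one cannot expect uniqueness and must instead extract a solution by compactness. As a preliminary reduction I would observe that a continuous function $\mathbf{x}:[t_0-\tau, t_0+\tau] \to \IR^N$ with $(t,\mathbf{x}(t)) \in Q$ solves \eqref{apx1ivp} if and only if it solves the integral equation
\[
\mathbf{x}(t) = \mathbf{y_0} + \int_{t_0}^{t} \mathbf{f}(s, \mathbf{x}(s)) \, ds,
\]
since, once the latter holds, the integrand $s \mapsto \mathbf{f}(s, \mathbf{x}(s))$ is continuous and the fundamental theorem of calculus upgrades $\mathbf{x}$ to a $C^1$ function with $\mathbf{x}' = \mathbf{f}(\cdot, \mathbf{x})$. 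Thus it suffices to construct a continuous solution of the integral equation.

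For the construction I would, for each $k \in \IN$, subdivide $[t_0-\tau, t_0+\tau]$ into $2k$ equal subintervals of length $\tau/k$ and define $\mathbf{x}_k$ as the Euler polygon: starting from $\mathbf{x}_k(t_0) = \mathbf{y_0}$ and propagating outward from $t_0$ in both directions, on each subinterval $\mathbf{x}_k$ is affine with slope equal to the value of $\mathbf{f}$ at the endpoint of that subinterval closer to $t_0$. Writing $\sigma_k(s)$ for that endpoint, one has the exact identity $\mathbf{x}_k(t) = \mathbf{y_0} + \int_{t_0}^{t} \mathbf{f}\bigl(\sigma_k(s), \mathbf{x}_k(\sigma_k(s))\bigr)\,ds$. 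An induction over the subintervals, using the bound $\norm{\mathbf{f}}{} \leq K$ on $Q$, shows that $(t, \mathbf{x}_k(t)) \in Q$ throughout — this is exactly where the choice $\tau = \min\{a, b/K\}$ enters, via $\norm{\mathbf{x}_k(t) - \mathbf{y_0}}{} \leq K\abs{t - t_0} \leq K\tau \leq b$ — so all slopes are bounded by $K$, the frozen values $\mathbf{f}(\sigma_k(s),\mathbf{x}_k(\sigma_k(s)))$ are always defined, and the family $\sequence{\mathbf{x}_k}{k \in \IN}$ is uniformly bounded and uniformly $K$-Lipschitz, hence equicontinuous.

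By the Arzel\`a--Ascoli theorem a subsequence, still denoted $\sequence{\mathbf{x}_k}{k \in \IN}$, converges uniformly on $[t_0-\tau, t_0+\tau]$ to a continuous limit $\mathbf{x}$, with $(t, \mathbf{x}(t)) \in Q$ as well. It remains to pass to the limit in the integral identity for $\mathbf{x}_k$. Since $\mathbf{f}$ is uniformly continuous on the compact set $Q$, since $\mathbf{x}_k \to \mathbf{x}$ uniformly, and since $\abs{\sigma_k(s) - s} \leq \tau/k \to 0$, the integrand $\mathbf{f}\bigl(\sigma_k(s), \mathbf{x}_k(\sigma_k(s))\bigr)$ converges uniformly to $\mathbf{f}(s, \mathbf{x}(s))$; passing to the limit under the integral (uniform convergence on a bounded interval suffices) yields the integral equation for $\mathbf{x}$, which by the preliminary reduction completes the proof.

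I expect the main obstacle to be precisely this passage to the limit together with the bookkeeping that keeps the polygons inside $Q$: the absence of a Lipschitz bound forces reliance on uniform continuity of $\mathbf{f}$ on the compact cylinder $Q$ and on the a priori containment $(t,\mathbf{x}_k(t)) \in Q$, and without the latter the frozen slopes need not be well defined. A slightly slicker alternative would be to apply the Schauder fixed point theorem to the map $\mathbf{x} \mapsto \mathbf{y_0} + \int_{t_0}^{\cdot} \mathbf{f}(s, \mathbf{x}(s))\,ds$ on a closed convex subset of $C([t_0-\tau, t_0+\tau]; \IR^N)$ whose relative compactness again follows from Arzel\`a--Ascoli, but the Euler-polygon argument is more elementary and self-contained.
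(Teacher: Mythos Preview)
The paper does not prove this statement: it appears in the Appendix as a cited result from \cite{zeidler1} with no accompanying proof, so there is nothing to compare your argument against. Your proposal is the standard and correct Euler-polygon/Arzel\`a--Ascoli proof of Peano's theorem, and the alternative via Schauder you mention is equally valid; either would serve as a self-contained supplement to what the paper merely quotes.
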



%
\begin{prop}[Evolution triples, {\cite[Chapter 23]{zeidler2}}] \label{propevtrip}
Let \(V\) be a real, separable, reflexive Banach space, \(H\) be a real, separable Hilbert space, and the embedding \(V \embedding H\) be continuous. Then the triple
\[
V \embedding H \embedding V'
\]
is called an ``evolution triple''. Moreover, we define for \(1 < p, q < \infty\), \(\frac{1}{p} + \frac{1}{q} = 1\),
\[
W^1_p(\openI; V, H) := \lbrace u \in \bochnerex{L^p}{\openI}{V} : u_t \in \bochnerex{L^q}{\openI}{V'} \rbrace \fst
\]
The following hold:
\begin{enumerate}
\item[(i)] The space \(W^1_p(\openI; V, H)\) is a real Banach space.
\item[(ii)] The embedding
\[
W^1_p(\openI; V, H) \embedding \bochnerex{C}{[0,T]}{H}
\]
is continuous.
\item[(iii)] The space \(\bochnerex{C^1}{[0,T]}{V}\) is dense in \(W^1_p(\openI; V, H)\).
\item[(iv)] For arbitrary \(u, v \in W^1_p(\openI; V, H)\) and \(0 \leq t_1, t_2 \leq T\), the generalized integration by parts formula
\[
\prodex{(u(t_2)}{v(t_2)}{H} - \prodex{(u(t_1)}{v(t_1)}{H} = \int\limits_{t_1}^{t_2} {\pairex{u'(t)}{v(t)}{V} - \pairex{v'(t)}{u(t)}{V} \, dt}
\]
holds. \qedthm
\end{enumerate}
\end{prop}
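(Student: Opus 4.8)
The statement is classical, and I would organize the proof so that the integration-by-parts formula (iv) and the continuous embedding (ii) are first obtained on the dense subspace $\bochnerex{C^1}{[0,T]}{V}$ and then extended by completion, with the density assertion (iii) as the technical engine; part (i) is handled separately and directly. Two elementary facts will be used throughout. First, since $q>1$, any $u\in W^1_p(\openI;V,H)$ has $u_t\in\bochnerex{L^1}{\openI}{V'}$, so $t\mapsto\int_0^t u_t(s)\,ds$ is $V'$-valued continuous and $u$ has a representative in $\bochnerex{C}{[0,T]}{V'}$ with $u(t)=u(0)+\int_0^t u_t(s)\,ds$. Second, for $w\in V$ one has $\norm{w}{H}^2=\pairex{w}{w}{V}\le\norm{w}{V'}\norm{w}{V}$, using the compatibility of the $H$-inner product with the $V'$--$V$ pairing (which presupposes $V$ dense in $H$, as is implicit in the notion of an evolution triple).

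For (i), the map $u\mapsto(u,u_t)$ identifies $W^1_p(\openI;V,H)$ with a subspace of the Banach space $\bochnerex{L^p}{\openI}{V}\times\bochnerex{L^q}{\openI}{V'}$, and this subspace is closed: if $u^k\to u$ in $\bochnerex{L^p}{\openI}{V}$ and $u^k_t\to w$ in $\bochnerex{L^q}{\openI}{V'}$, passing to the limit in $\int_0^T u^k(t)\,\varphi'(t)\,dt=-\int_0^T u^k_t(t)\,\varphi(t)\,dt$ for $\varphi\in C^\infty_0(0,T)$ (paired against an arbitrary element of $V$) identifies $w=u_t$. For $u,v\in\bochnerex{C^1}{[0,T]}{V}$ the scalar function $t\mapsto\prodex{u(t)}{v(t)}{H}$ is continuously differentiable with derivative $\prodex{u_t(t)}{v(t)}{H}+\prodex{u(t)}{v_t(t)}{H}=\pairex{u_t(t)}{v(t)}{V}+\pairex{v_t(t)}{u(t)}{V}$; integrating over $[t_1,t_2]$ yields (iv) for smooth functions. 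Taking $u=v$ and $t_1=s$ gives $\norm{u(t)}{H}^2\le\norm{u(s)}{H}^2+2\norm{u_t}{\bochnerex{L^q}{\openI}{V'}}\norm{u}{\bochnerex{L^p}{\openI}{V}}$; integrating in $s$ over $(0,T)$, bounding $\int_0^T\norm{u(s)}{H}^2\,ds$ via $\norm{w}{H}^2\le\norm{w}{V'}\norm{w}{V}$ and Hölder's inequality, and controlling $\norm{u}{\bochnerex{L^q}{\openI}{V'}}$ through the representation $u(t)=u(0)+\int_0^t u_t$, one obtains the quadratic homogeneous estimate $\norm{u}{\bochnerex{C}{[0,T]}{H}}\le c\,\norm{u}{W^1_p(\openI;V,H)}$ for all $u\in\bochnerex{C^1}{[0,T]}{V}$.

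The density of $\bochnerex{C^1}{[0,T]}{V}$ in $W^1_p(\openI;V,H)$ is the core of the argument. Given $u\in W^1_p(\openI;V,H)$, I would extend it to $(-T,2T)$ by even reflection about $t=0$ and $t=T$; the reflected function then lies in $\bochnerex{L^p}{(-T,2T)}{V}$, and its weak time derivative is the odd reflection of $u_t$, the delicate point being that the extension carries no singular (Dirac) part in its derivative at $0$ and $T$ — this is where one invokes that $u$ is continuous into $V'$ at the endpoints, so the reflected boundary values match. Mollifying in time, $u^\varepsilon:=u\ast\rho_\varepsilon$ is in $C^\infty$ with values in $V$, with $u^\varepsilon\to u$ in $\bochnerex{L^p}{\openI}{V}$ and $u^\varepsilon_t=u_t\ast\rho_\varepsilon\to u_t$ in $\bochnerex{L^q}{\openI}{V'}$; restricting to $[0,T]$ yields the desired approximating sequence. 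This endpoint behaviour of the extension is the step I expect to be the main obstacle; the alternative of regularizing without extension, via Steklov averages on shrinking subintervals, works too but is at least as delicate.

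Finally, for arbitrary $u\in W^1_p(\openI;V,H)$ pick $u^k\in\bochnerex{C^1}{[0,T]}{V}$ with $u^k\to u$ in $W^1_p(\openI;V,H)$. Applying the smooth estimate to the differences $u^k-u^j$ shows $\sequence{u^k}{k\in\IN}$ is Cauchy in $\bochnerex{C}{[0,T]}{H}$, hence convergent there; since $u^k\to u$ also in $\bochnerex{L^p}{\openI}{H}$, the limit is a continuous representative of $u$, and the estimate passes to the limit, which proves (ii). For (iv) in general, apply the smooth identity to $u^k,v^k$ and let $k\to\infty$: the boundary terms $\prodex{u^k(t_i)}{v^k(t_i)}{H}$ converge to $\prodex{u(t_i)}{v(t_i)}{H}$ by (ii), while the two time-integrals converge because $u^k_t\to u_t$ and $v^k_t\to v_t$ strongly in $\bochnerex{L^q}{\openI}{V'}$ and $v^k\to v$, $u^k\to u$ strongly in $\bochnerex{L^p}{\openI}{V}$. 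This completes the proof.
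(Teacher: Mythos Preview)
The paper does not supply a proof of this proposition: it is listed in the Appendix as a quoted result with the citation \cite[Chapter 23]{zeidler2} and no accompanying argument. So there is nothing to compare your proof against in the paper itself.

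That said, your proof is essentially the standard one found in Zeidler (and in Lions, Wloka, etc.): establish (iv) and the $\bochnerex{C}{[0,T]}{H}$-estimate on the smooth subspace $\bochnerex{C^1}{[0,T]}{V}$, prove density (iii) via reflection and mollification in time, and then pass to the limit. The argument is correct. Two minor remarks: your parenthetical that the evolution-triple setup ``presupposes $V$ dense in $H$'' is right and worth saying, since the paper's statement leaves this implicit; and in your $\bochnerex{C}{[0,T]}{H}$-estimate, the step ``controlling $\norm{u}{\bochnerex{L^q}{\openI}{V'}}$ through the representation $u(t)=u(0)+\int_0^t u_t$'' really also uses the continuous embedding $V\hookrightarrow V'$ to first control $u(0)$ (or, equivalently, one averages the identity $u(t)=u(s)+\int_s^t u_t$ in $s$ and bounds $\int_0^T\norm{u(s)}{V'}\,ds$ via $\norm{u}{\bochnerex{L^p}{\openI}{V}}$). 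With that small clarification the estimate goes through for all $1<p<\infty$.
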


\begin{thmapx}[Simon, {\cite[Corollary 4]{simon}}] \label{thmsimon}
Let \(X\), \(B\) and \(Y\) be Banach spaces such that
\[
X \cptembedding B \embedding Y \fst
\]
For a set of functions \(F \subset \bochnerex{L^1}{\openI}{X}\) we denote by
\[
\partial_t F := \lbrace \partial_t f : f \in F \rbrace
\]
the set of the distributional derivatives of functions in \(F\). It holds:
\begin{enumerate}
\item[(i)] Let \(F\) be bounded in \(\bochnerex{L^p}{\openI}{X}\) where \(1 \leq p < \infty\), and \(\partial_t F\) be bounded in \(\bochnerex{L^1}{\openI}{Y}\). Then \(F\) is relatively compact in \(\bochnerex{L^p}{\openI}{B}\).
\item[(ii)] Let \(F\) be bounded in \(\bochnerex{L^\infty}{\openI}{X}\) and \(\partial_t F\) be bounded in \(\bochner{L^r}{\openI}{Y}\) where \(r > 1\). Then \(F\) is relatively compact in \(\bochnerex{C}{[0,T]}{B}\). \qedthm
\end{enumerate}
\end{thmapx}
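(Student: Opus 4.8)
The plan is to argue along the classical Aubin--Lions--Simon scheme: turn the compact embedding $X \cptembedding B$ into an Ehrling interpolation inequality and combine it with a characterisation of relative compactness in the target Bochner space --- the vector-valued Fr\'{e}chet--Kolmogorov criterion for part (i), the vector-valued Arzel\`{a}--Ascoli theorem for part (ii). In both cases the bound on $\partial_t F$ will be used only to control how $F$ behaves under small time-translations (respectively, to produce a uniform modulus of continuity in time) \emph{with values in the weak space $Y$}, while the compactness of $X \cptembedding B$ converts this weak-in-$B$ information about the $X$-bounded family $F$ into the required strong-in-$B$ compactness.

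First I would establish the Ehrling inequality: since $X \cptembedding B \embedding Y$, for every $\eta > 0$ there is $C_\eta > 0$ with
\[
\|v\|_B \leq \eta \, \|v\|_X + C_\eta \, \|v\|_Y \qquad \forall \, v \in X,
\]
proved by contradiction (a violating sequence, renormalised in $B$, is bounded in $X$, hence $B$-convergent by $X \cptembedding B$, but its $Y$-norms tend to $0$, forcing the $B$-limit to vanish since $B \embedding Y$ is injective). For part (i) I would then invoke the Fr\'{e}chet--Kolmogorov criterion: a bounded subset $F$ of $L^p(I;B)$, $1 \leq p < \infty$, is relatively compact there if and only if the integral means $\bigl\{ \int_{t_1}^{t_2} f \, dt : f \in F \bigr\}$ are relatively compact in $B$ for all $0 < t_1 < t_2 < T$, and $\sup_{f \in F} \| f(\cdot + h) - f \|_{L^p(0,T-h;B)} \to 0$ as $h \to 0^{+}$. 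The first condition is immediate: $\bigl\| \int_{t_1}^{t_2} f \bigr\|_X \leq T^{1/p'} \|f\|_{L^p(I;X)}$ confines all these means to one bounded ball of $X$, which is relatively compact in $B$ by $X \cptembedding B$. The second condition is where $\partial_t F$ enters: from $f(t+h) - f(t) = \int_t^{t+h} \partial_t f \, ds$ in $Y$ and Fubini one gets $\| f(\cdot + h) - f \|_{L^1(0,T-h;Y)} \leq h \, \|\partial_t f\|_{L^1(I;Y)}$, uniformly for $f \in F$, while the translations $f(\cdot + h) - f$ stay bounded in $L^p(0,T-h;X)$ uniformly in $f$ and $h$. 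Applying the Ehrling inequality pointwise in $t$ settles $p = 1$ at once, since then $\| f(\cdot + h) - f \|_{L^1(0,T-h;B)} \leq 2\eta \sup_F \|f\|_{L^1(I;X)} + C_\eta \, h \sup_F \|\partial_t f\|_{L^1(I;Y)}$ and one chooses $\eta$ small, then $h$ small.

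The case $1 < p < \infty$ of part (i) is the step I expect to be the main obstacle. The derivative bound yields $L^1$-in-time control of the $Y$-translations, but the criterion asks for the $L^p$-in-time norm, and merely interpolating time-integrabilities only gives $L^q(0,T-h;B)$-smallness for $q < p$. To close the gap I would follow Simon's refinement: one shows that ``$F$ bounded in $L^p(I;X)$ together with $\partial_t F$ bounded in $L^1(I;Y)$'' forces the family $\{ \|f(\cdot)\|_B^p : f \in F \}$ to be uniformly integrable on $I$ --- heuristically, a large $X$-norm of $f(t)$ can occur only on a set of small measure, and the slowness of $f$ in $Y$ prevents such a spike from carrying non-negligible $B$-mass --- and then upgrades the $L^q$-smallness, $q < p$, of translations to $L^p$-smallness via uniform integrability. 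Reconstructing this uniform-integrability estimate carefully is the technical heart of the proof.

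Part (ii) is more direct. For $r > 1$ every $f \in F$ lies in $W^{1,r}(I;Y) \embedding C^{0,1-1/r}([0,T];Y)$ with, by H\"{o}lder's inequality, $\|f(t) - f(s)\|_Y \leq \|\partial_t f\|_{L^r(I;Y)} \, |t-s|^{1-1/r}$, a modulus of continuity uniform over $F$; thus $F$ is uniformly equicontinuous as a family of $Y$-valued functions. The sections are confined to one $B$-precompact set: $\tfrac{1}{h} \int_t^{t+h} f$ is bounded in $X$ uniformly in $f$, $h$, $t$, hence ranges in a fixed $B$-precompact set and, since it converges to $f(t)$ in $Y$ as $h \to 0$, converges to $f(t)$ in $B$ as well (a net in a $B$-precompact set converging in $Y$ converges in $B$), so $f(t)$ lies in that same fixed $B$-precompact set for all $f \in F$ and all $t$. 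Finally the Ehrling inequality upgrades the $Y$-equicontinuity to a $B$-equicontinuity,
\[
\|f(t) - f(s)\|_B \leq 2\eta \, \sup_F \|f\|_{L^\infty(I;X)} + C_\eta \, \sup_F \|\partial_t f\|_{L^r(I;Y)} \, |t-s|^{1-1/r},
\]
still uniform in $f$, and the vector-valued Arzel\`{a}--Ascoli theorem then gives that $F$ is relatively compact in $C([0,T];B)$.
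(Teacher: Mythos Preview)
The paper does not prove this statement; it is quoted verbatim from Simon's article \cite[Corollary~4]{simon} and listed in the appendix as a known result, with no argument given. So there is nothing to compare your proposal against in the paper itself.

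That said, your sketch is essentially Simon's original proof: Ehrling's lemma, the vector-valued Fr\'{e}chet--Kolmogorov / Arzel\`{a}--Ascoli criteria, and the use of the time-derivative bound to control translations in the weak space $Y$. You correctly flag the genuine difficulty, namely upgrading the $L^1$-in-time translation control to $L^p$ when $1 < p < \infty$; Simon handles this not quite by ``uniform integrability of $\|f(\cdot)\|_B^p$'' as you phrase it, but by a time-localisation and averaging argument (splitting $f$ into a mean part plus a fluctuation on small subintervals) that ultimately achieves the same effect. Your part~(ii) is clean and correct. If you want to turn this into a full proof rather than a plan, the place to invest effort is making precise the passage from $L^q$-smallness of translations, $q < p$, to $L^p$-smallness; the heuristic you give is right in spirit but is not yet an argument.
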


\clearsection

\bibliography{da-en}

\newpage
\cleardoublepage
\thispagestyle{plain}

\section*{Selbstst\"{a}ndigkeitserkl\"{a}rung}

Hiermit erkl\"{a}re ich, dass ich die vorliegende Arbeit selbstst\"{a}ndig und
nur unter Verwendung der angegebenen Quellen und Hilfsmittel angefertigt
habe.

\vspace*{1cm}

\begin{center}
Erlangen, den 8. Juni 2009
\hspace*{2.5cm}
\begin{tabular}{@{}p{5cm}@{}}
\phantom{Jan Friederich}\\
\hline\centering\footnotesize{(Jan Friederich)}
\end{tabular}
\end{center}

\end{document}